\documentclass{amsart}
\usepackage{color}
\usepackage{graphicx}
\usepackage[colorlinks=true,linkcolor=blue,citecolor=blue]{hyperref}
\usepackage{pstricks}

% To hide color and comments remove the % from the following 2 lines
%\long\def\comment#1{}
%\def\bbb{}

\newtheorem{theorem}{Theorem}[section]
\newtheorem{lemma}[theorem]{Lemma}
\newtheorem{prop}[theorem]{Proposition}

\newtheorem{definition}[theorem]{Definition}

\newtheorem*{theorem*}{Theorem}

\theoremstyle{definition}
\newtheorem{remark}[theorem]{Remark}

\newcommand{\inn}{{\quad\hbox{in } }}

\newcommand{\nn}{ {\nabla}  }

\newcommand{\pp}{ {\partial} }

\newcommand{\cuad}{{\sqcap\kern-.68em\sqcup}}

\newcommand{\ve}{\varepsilon}

\newcommand{\be}{\begin{equation}}
\newcommand{\ee}{\end{equation}}

\newcommand{\equ}[1]{(\ref{#1})}

\numberwithin{equation}{section}

\newcommand{\R}{\mathbb{R}}

\newcommand{\Z}{\mathbb{Z}}
\newcommand{\N}{\mathbb{N}}

\newcommand{\lcal}{\mathcal{L}}

\newcommand{\pd}[2]{\frac{\partial #1}{\partial #2}}
\def\p#1,#2,#3 {\rlap{\kern#1pt\raise#2pt\hbox{#3}}}

\numberwithin{equation}{section}

\title[]{finite topology self-translating surfaces for the mean curvature flow in $\R^3$}

\author{Juan D\'avila}
\address{\noindent J. D\'avila -
Departamento de Ingenier\'{\i}a Matem\'atica and CMM, Universidad
de Chile, Casilla 170 Correo 3, Santiago, Chile.}
\email{jdavila@dim.uchile.cl}

\author{Manuel del Pino}
\address{\noindent M. del Pino - Departamento de
Ingenier\'{\i}a  Matem\'atica and CMM, Universidad de Chile, Casilla
170 Correo 3, Santiago, Chile.} \email{delpino@dim.uchile.cl}

\author{Xuan Hien Nguyen}
\address{\noindent X.H. Nguyen - 
Department of Mathematics, Iowa State University, Ames, IA 50011, U.S.A.}
\email{xhnguyen@iastate.edu}

\begin{document}

\begin{abstract}
	Finite topology self translating surfaces to mean curvature flow of surfaces constitute a key element for the analysis of Type II singularities from a compact surface, since they arise in a limit after suitable blow-up scalings around the singularity. We find in $\R^3$ a surface $M$ orientable, embedded and complete with finite topology (and large genus) with three ends
	asymptotically paraboloidal, such that the moving surface $\Sigma(t) = M + te_z$ evolves by mean curvature flow. This amounts to the equation
	$H_M = \nu\cdot e_z$ where $H_M$ denotes mean curvature, $\nu$ is a choice of unit normal to $M$, and $e_z$ is a unit vector along the $z$-axis.
	The surface $M$ is in correspondence with the classical 3-end Costa-Hoffmann-Meeks minimal surface with large genus, which has two asymptotically catenoidal ends and one planar end, and a long
	array of small tunnels in the intersection region resembling a periodic Scherk surface.
	  This example is the first non-trivial one of its kind, and it suggests a strong connection between this problem and the theory of embedded, complete minimal surfaces with finite total curvature.
	
\end{abstract}

\maketitle

\section{Introduction}
%\subsection{Self-translators for the mean curvature flow and singularity formation}

We say that a family of orientable, embedded hypersurfaces $\Sigma(t)$ in $\R^{n+1}$
evolves by mean curvature if each point of $\Sigma(t)$ moves in the normal direction
with a velocity proportional to its mean curvature at that point.
More precisely, there is a smooth family of  diffeomorphisms  $Y(\cdot, t) :\Sigma(0) \to \Sigma(t)$, $t>0$,
determined by the mean curvature flow equation

\be
\frac {\pp Y}{\pp t}   = H_{\Sigma(t)} (Y) \nu( Y)
\label{mcf}\ee
where
$H_{\Sigma(t)} (Y) $ %   = \sum_{i=1}^n \kappa_i (Y) $$
  designates the mean curvature of the surface $\Sigma(t)$ at the point $Y(y, t)$, $y\in \Sigma(0)$, namely the trace of its second fundamental form, $\nu$ is a choice of unit normal vector.

\medskip
 The mean curvature flow  is one of the most important examples of parabolic geometric evolution of manifolds. Relatively simple in form, it generates a  wealth of interesting phenomena which is so far only partly understood.
 Extensive, deep  studies on the properties of this equation have been performed in the last 25 years or so. We refer for instance the reader to the surveys \cite{cm1} and \cite{wang0}.

\medskip
 A classical,  global-in-time definition of a weak solution to mean curvature flow is due to Brakke. These solutions typically develop finite time singularities. When they arise, the evolving manifold
 loses smoothness, and a change of topology of the surface may occur as the singular time is crossed.

\medskip
 The basic issue of the theory for the mean curvature flow, is to understand the way singularities appear and to achieve an accurate description of the topology of the surface obtained after blowing-up the manifold around the singularity.

 Singuarities are usually classified as types I and II. If $T$ is a time when a singularity
 appears, type I  roughly means that the curvatures grow no faster than $(T-t)^{-\frac 12}$. In such a case, a blowing-up procedure, involving a time dependent scaling and translation leads in the limit to a ``self shrinking'' {\rm ancient solution}, as established by Huisken in \cite{huisken}. The appearance of these singularities turns out to be generic under suitable assumptions, see Colding and Minicozzi \cite{cm2}.

\medskip 
Instead, if the singularity is not of type I, it is called type II. In that case, a suitable normalization leads in the limit to an {\em eternal solution} to MC flow.
 See Colding and Mincozzi \cite{cm}, Huisken and Sinestrari \cite{hs1,hs2}.
An eternal solution to \equ{mcf} is one that is defined at all times $t\in (-\infty, \infty)$.
%The mean curvature flow typically develops singularities in finite time.
%Understanding the way
% singularities form  is one of the major issues in the theory of this and other geometric flows.  An eternal solution usually arises as a limit after suitable blowing-up of the solution near a singularity.

\medskip
The simplest type of eternal solutions are the {\em self-translating solutions}, surfaces that solve \equ{mcf}, do not change shape and travel at constant speed in some specific direction.
A { self-translating} solution of the mean curvature flow \equ{mcf}, with speed $c>0$ and direction ${\tt e}\in S^{N-1}$ is a hypersurface  of the form
\be\label{st}
\Sigma (t)\, =\,  c t{\tt e} \, +\, \Sigma (0)\, ,\ee
that satisfies \equ{mcf}. Equivalently, such that
\be \label{st1}
H_{\Sigma(0)}  =  c  \tt e\cdot\nu.
\ee
This problem is nothing but the minimal surface equation in case $c=0$.
A result by Hamilton \cite{hamilton} states that in the case of a compact  convex surface,  the limiting scaled singularity does indeed take place in the form of a self-translating solution. This fact makes apparent  the importance of eternal self-translating solutions
in the understanding of singularity formation.
 On the other hand,  the result in \cite{hamilton}
   is not known without some convexity assumptions. An open, challenging issue is to understand whether or not a given ``self-translator'' (convex or non-convex) can arise as a limit of a type II singularity for \equ{mcf}.

 \medskip
 A situation in which strong insight has been obtained is the mean convex scenario (namely, surfaces with non-negative mean curvature, a property that is preserved under the flow). In fact under quite general assumptions, mean convexity in the singular limit becomes full convexity for the blown-up surface, as it has been established by  B. White \cite{white2,white3}, and by Huisken and Sinestrari \cite{hs1,hs2}.

 \bigskip
 In spite of their importance in the theory for the mean curvature flow,  relatively few examples of self-translating solutions are known, and a theory for their understanding, even in special classes is still far from achieved. Since for $c=0$, equation \equ{st1} reduces to the minimal surface equation, it is natural to look for analogies with minimal surface theory in order to obtain new nontrivial examples.
 On the other hand, it  has been proven by Ilmanen \cite{ilmanen1,ilmanen2}, that the genus
 of a surface is nonincreasing along the mean curvature flow. Therefore, self-translators originated from a singularity in the flow of a compact surface must have finite genus, or {\bf finite topology}.

 \bigskip
 The purpose of this paper is to construct new examples of self-translating surfaces to the mean curvature flow with the finite topology  in $\R^3$. More precisely, we are interested in tracing a parallel between the theory
 of embedded, complete minimal surfaces in with finite total Gauss curvature (which are precisely those with finite topology) and self-translators with positive speed. Before stating our main
 result, we recall next some classical examples of self-translators.

 \medskip
 If $\Sigma(t) = \Sigma(0)  +  cte_{n+1} $ is a {\bf travelling graph }, namely
 $$\Sigma (0) = \{ (x,x_{n+1})\ /\ x_{n+1} = F(x) , \ x\in\Omega\subset \R^n  \} $$ then equation \equ{st1}
 reduces to the elliptic PDE for $F$,
   \be
   \label{mc3}
   \ \nn \cdot \left ( \frac {\nn F } { \sqrt{ 1+
   		|\nn F|^2 }} \right ) = \frac{c}{\sqrt{1+|\nabla F|^2}}  \inn \Omega \subset \R^n.
   \ee
  For instance for $n=1$ and $c=1$, Grayson \cite{grayson} found the explicit solution, so-called the {\em grim reaper} curve $\mathcal G$ , given by the graph
  \be\label{G}  x_2 = F(x_1)= -\log (\cos x_1), \quad x_1\in (-\frac \pi 2, \frac \pi 2). \ee
In other words, $\Sigma(t) =  \mathcal G + te_2 $ solves \equ{mcf}.

\medskip
   For dimensions $n\ge 2$, there exist entire convex solutions to Equation \equ{mc3}.
   Altschuler and Wu \cite{altschuler-wu} found a radially symmetric convex solution $F(|x|)$  to \equ{mc3} by blowing-up a type II singularity of mean curvature flow. This solution can be explicitly found by solving the radial PDE \equ{mc3} which becomes simply
   \begin{align}
   	\label{eqFrad1}
   	\frac{F''}{1+(F')^2} +    (n-1)\frac  {F'}r = c .
   \end{align}
   See  \cite{angenent-velazquez} and \cite{clutterbuck-schnurer-schulze}.
     The resulting surface is asymptotically a paraboloid: at main order, when $c=1$, it has the
 behavior
 \be
 F(r)= \frac{r^2}{2(n-1)} - \log r + O(r^{-1}) \quad\hbox{as } r \to +\infty.
 \label{g} \ee
 We shall denote by  $\mathcal P$ the graph of this entire graphical self-translator (which is unique up to an additive constant) which we shall refer to as  the {\bf travelling paraboloid}.  Of course this means that $\Sigma(t) = \mathcal P + te_{n+1}$ solves \equ{mcf}.

 \medskip
 Xu-Jia Wang \cite{wang} proved that for $n=2$, solutions of \equ{mc3} are necessarily radially symmetric about some point, thus in particular they are convex.  Suprprisingly, for dimensions $n\ge 3$,  Wang  was able to construct nonradial convex solutions of \equ{mc3}.

\medskip
In dimension $n+1$, $n\ge 2$, Angenent and Vel\'azquez \cite{angenent-velazquez} constructed an axially symmetric solution to \equ{mcf} that develops a  type II singularity with a tip that blows-up precisely
into the paraboloid   $\mathcal P$.
On the other hand, B. White proved that the convex surface in $\R^{n+1}$ given by the $\mathcal G \times \R^{n-1}$ where $\mathcal G$ is the grim reaper curve \equ{G}, cannot arise as a blow-up of a type II singularity for \equ{mcf}.

 \medskip
 A non-graphical, two-end axially symmetric self translating solutions of \equ{mcf} for $n\ge 2$ has been found by direct integration of the radial PDE \equ{mc3} by Clutterbuck, Schnurer and Schulze   \cite{clutterbuck-schnurer-schulze}. It can be described as as follows:

\medskip
 Given any number $R>0$, there is a self-translating solution of \equ{mcf}
 $$ \Sigma(t) = \mathcal{W} + t e_{n+1} $$
  where $\mathcal{W}$ is a two-end  smooth surface of revolution of the form
 $$\mathcal{W} = \mathcal{W}^+ \cup \mathcal{W}^-, \quad \mathcal{W}^{\pm} = \{(x,x_{n+1}) \ /\  x_{n+1} = F^\pm (|x|)\,   \}   $$
    where the functions
    $F^\pm (r)$  solve \equ{eqFrad1} for
    $c=1$ and $r>R$, with  $F^-(r) < F^+(r)$ and  $F^+(R)=F^-(R)$.
    It is shown in \cite{clutterbuck-schnurer-schulze} that the functions $F^\pm$ have the asymptotic behavior \eqref{g} of $\mathcal P$ up to an additive constant. See Figure \ref{fig1}.
 We  call the two-end translating surface $\mathcal W$ the {\bf travelling catenoid}.
The reason is  natural: when $c=0$ equation \equ{eqFrad1} is nothing but the minimal surface
equation for an axially symmetric minimal surface around the $x_{n+1}$-axis.
When $n=3$ the equation leads (up to translations) to the plane
$
x_3 = 0
$, or the standard catenoid $ r= \cosh (x_3)$. The catenoid is exactly the parallel to $\mathcal W$.
The plane is actually in correspondence with the paraboloid $\mathcal P$.

\medskip
 These simple, however important examples, are the only ones available with finite topology.
 On the other hand,
 the third author  has constructed self translating surfaces with infinite topology, periodic in one direction in \cite{nguyen-2009,nguyen, nguyen-2014}.

\subsection*{Embedded minimal surfaces of finite total curvature in $\R^3$.} The
theory of embedded, minimal surfaces of finite total curvature in $\R^3$
has reached a spectacular development in the last 30 years or so. For about
two centuries, only two examples of such surfaces were known: the plane
and the catenoid. The first nontrivial example was found in 1981 by C.
Costa \cite{4,5}. The Costa surface is a genus one minimal surface, complete
and properly embedded, which outside a large ball has exactly
three components (its ends), two of which are asymptotically catenoids
with the same axis and opposite directions, the third one asymptotic
to a plane perpendicular to that axis.  Hoffman and Meeks \cite{20,hoffmann-meeks-1990-limits, 24}  built a class of three-end,
embedded minimal surfaces, with the same look as Costa'Âs far away, but
with an array of tunnels that provides arbitrary genus $k$. These are
known as the Costa-Hoffman-Meeks surfaces, see Figure \ref{chm}.
Many other examples of multiple-end embedded minimal surfaces
have been found since.

% see for instance the works by Kapouleas \cite{kapouleas-1991} and
%Traizet \cite{40}.

\medskip
All surfaces of this kind  are constituted, away from a compact region, by the disjoint union of  ends
ordered along one coordinate axis, which are asymptotic to planes or to catenoids with  parallel symmetry axes, as established by  Osserman \cite{35}, Schoen \cite{39} and Jorge and Meeks \cite{28}. The topology of such a  surface is thus characterized by the genus of a compact region and the number
of ends, having therefore finite topology.

\medskip
\subsection*{Main result: the travelling CHM surface of large genus}
In what follows we restrict ourselves to the the case $n+1=3$.

Our purpose is
to construct new complete and embedded surfaces in $\R^3$ which are self translating under mean curvature flow. After
a rotation and dilation we can assume that  $c=1$ and that the travelling direction is the
that of the positive $x_3$-axis. Thus we look for  orientable, embedded complete surfaces $M$ in $\R^3$ satisfying the equation
\begin{align}
\label{selftr-mcf}
H_M = e_z \cdot \nu.
\end{align}
where $ e_z=e_3$. In other words, the moving surface $\Sigma (t) = M + te_z$ satisfies equation \equ{mcf}.
 A major difficulty to extend the theory of finite total curvature minimal surfaces in Euclidean 3d space to equation
\equ{selftr-mcf} is that much of the theory developed relies in the powerful tool given by the Weierstrass representation formula, which is not available in our setting. Unlike the static case,
the travelling catenoid for instance is not asymptotically flat and does not have total finite total Gauss curvature.

\medskip
What we establish in our main result is the existence of a 3-end surface $M$ that solves
\equ{selftr-mcf}, homeomorphic to a Costa-Hoffmann-Meeks surface with large genus, whose ends
behave like those of a travelling catenoid and a paraboloid.

\medskip
More precisely, let us consider the union of a travelling paraboloid
$\mathcal P$ and and a travelling catenoid $\mathcal W$, which intersect transversally on a circle
$C_\rho$ for some $\rho>0$. See Figure \ref{fig1}.

  Our surface looks outside a compact set like $\mathcal P \cup \mathcal W $   in Figure \ref{fig1}, while near the circle $C_\rho$ the look is that of the static CHM surface in Figure \ref{chm}.

\medskip

\begin{figure}
\centerline{\includegraphics[scale=0.5]{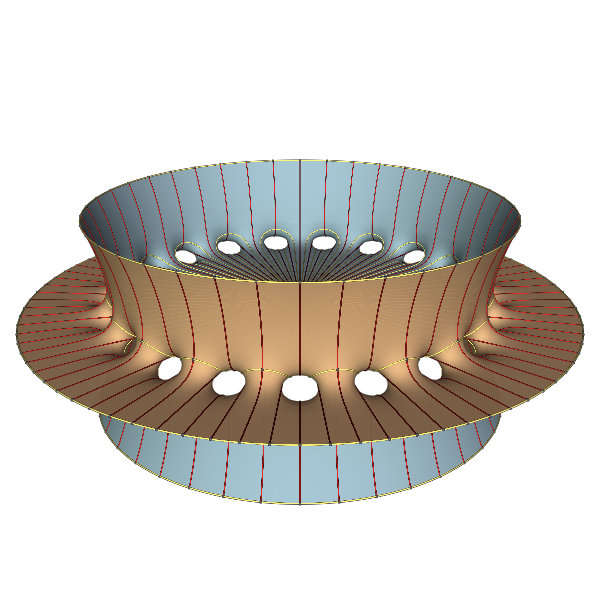}}
\caption{Costa-Hoffman-Meeks surface (from www.indiana.edu/\~{}minimal/)}
\label{chm}
\end{figure}

\begin{figure}

\centerline{\includegraphics[width=9cm]{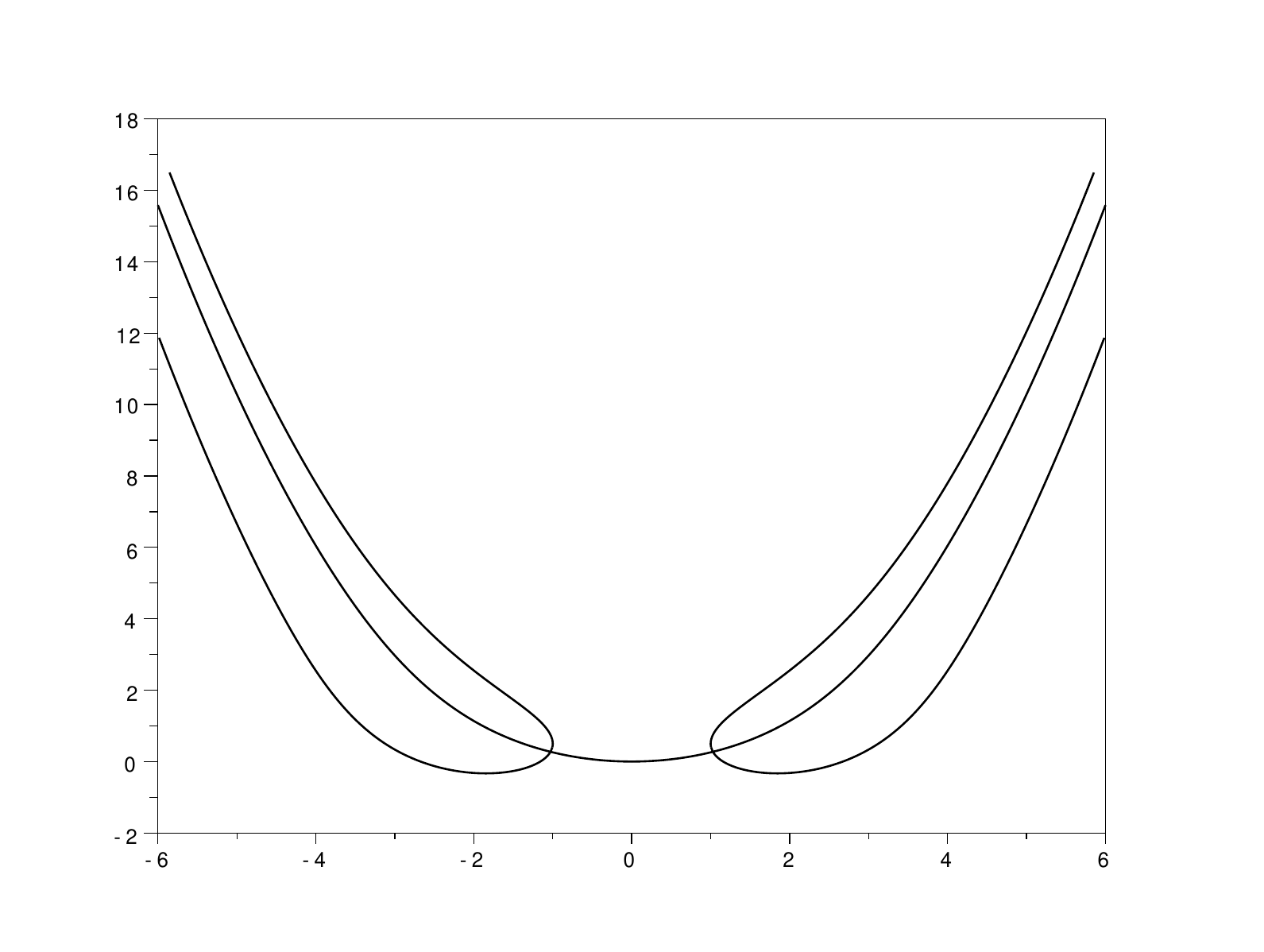}}
\caption{Travelling paraboloid and catenoid}
\label{fig1}
\end{figure}

%If we translate in the $z$ direction a catenoid $W_R$ we obtain also axially symmetric self-translating solutions, which we write as $W_R + a e_z$. For $a$ in some range, $W_R + a e_z$ will intersect $\mathcal P$, in other words there is a solution $\rho>0$ of either $F_R^+(\rho) + a = F_0(\rho)$ or
% $F_R^-(\rho) + a = F_0(\rho)$. We note (to be proved later)  that given $a$, if there is an intersection the radius $\rho>0 $ is unique.
%

\begin{theorem}
\label{main thm}
Let $\mathcal P$ and $\mathcal{W}$ be respectively a travelling paraboloid and catenoid, which intersect transversally.  Then for all $\ve>0$ small, there is a complete embedded 3-end surface $M_\ve$ satisfying equation \eqref{selftr-mcf}, which lies within an $\ve$-neighborhood of  $\mathcal P\cup \mathcal{W}  $.  Besides we have that
$$
{\rm genus}\, (M_\ve) \sim \frac 1\ve .
$$
\end{theorem}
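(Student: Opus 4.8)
The plan is to build $M_\ve$ by the classical desingularization method of Kapouleas and Traizet, adapted to the translator equation \eqref{selftr-mcf} rather than the minimal surface equation. The approximate solution is assembled from three building blocks: a large piece of the travelling paraboloid $\mathcal P$, a large piece of the travelling catenoid $\mathcal W$, and — inserted along the circle $C_\rho$ where they meet transversally — a suitably scaled and bent copy of the singly periodic Scherk surface, with period comparable to $\ve$, which resolves the transversal intersection into roughly $\ve^{-1}$ small necks (``tunnels''). Because near $C_\rho$ the translator operator, after rescaling by $\ve$, converges to the minimal surface operator, the Scherk piece only needs to solve \eqref{selftr-mcf} up to an error of size $O(\ve)$, which is acceptable. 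Matching the Scherk wings to the two sheets $\mathcal P$ and $\mathcal W$ in the overlap regions is done by choosing the Scherk period, the horizontal position of $C_\rho$, and a small ``unbalancing'' (a vertical shift and a logarithmic growth parameter) as free parameters; this is the finite-dimensional part of the construction and is where the count ${\rm genus}(M_\ve)\sim 1/\ve$ comes from, one handle per period.

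The core analytic step is to write $M_\ve$ as a normal graph of an unknown function $w$ over the approximate surface $\bar M_\ve$, reduce \eqref{selftr-mcf} to $\LLL_\ve w = E_\ve + Q_\ve(w)$ where $\LLL_\ve$ is the linearized translator (Jacobi) operator, $E_\ve$ is the error of the approximate solution, and $Q_\ve$ collects quadratic and higher terms, and then invert $\LLL_\ve$ in weighted Hölder spaces. The linearized operator on each end is a perturbation of the Jacobi operator of $\mathcal P$ or $\mathcal W$; on the paraboloidal ends the quadratic growth of $F$ makes the operator behave like a harmonic oscillator with a spectral gap, giving good invertibility with exponential-type weights, while on the Scherk piece one uses the known non-degeneracy (modulo the obvious geometric kernel) of the Scherk surface's Jacobi operator. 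The bounded geometric kernel — translations, the dilation/bending mode of the necks — is precisely what the finite-dimensional parameters are tuned to kill, so one sets up a Lyapunov–Schmidt scheme: solve the infinite-dimensional equation for $w=w(\text{parameters})$ by a fixed-point argument using the a priori estimate $\|w\|\le C\|E_\ve\|\le C\ve^{\alpha}$, and then solve the remaining finitely many ``balancing'' equations for the parameters by a degree or implicit-function argument.

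The main obstacle — and the technical heart of the paper — is the \emph{uniform} invertibility of $\LLL_\ve$ as $\ve\to 0$: the approximate surface has a degenerating part (the necks pinch), the geometry is non-compact with three different asymptotic regimes, and the translator operator is not self-adjoint in the naive $L^2$ sense and lacks a Weierstrass representation, so the usual minimal-surface tricks are unavailable. One must prove a linear a priori estimate $\|w\|_{*}\le C\|\LLL_\ve w\|_{**}$ with $C$ independent of $\ve$, by a contradiction/blow-up argument: a sequence of near-kernel elements would, after rescaling around the appropriate part of $\bar M_\ve$, converge to a bounded Jacobi field on one of the limit objects ($\mathcal P$, $\mathcal W$, or Scherk), and one invokes the non-degeneracy of each limit (for $\mathcal P$ and $\mathcal W$ this reduces to an ODE analysis of \eqref{eqFrad1}; for Scherk it is classical) together with the orthogonality conditions to reach a contradiction. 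Once this estimate is in hand, the fixed-point and the finite-dimensional reduction are routine, and embeddedness plus the genus count follow from the fact that $M_\ve$ stays in an $\ve$-tubular neighborhood of the embedded surface $\mathcal P\cup\mathcal W$ with one Scherk handle per period of length $\sim\ve$ along the fixed-length circle $C_\rho$.
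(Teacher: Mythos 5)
Your overall blueprint matches the paper: rescale by $\ve^{-1}$, build an approximate surface from the two travelling ends and a bent singly periodic Scherk piece, write the unknown as a normal graph, invert the linearized operator in weighted H\"older norms, and close with a fixed-point argument while tuning finitely many geometric parameters (the paper uses four dislocation parameters $\beta_1,\beta_4,\tau_1,\tau_4$ of the Scherk wings, chosen precisely to kill the four-dimensional obstruction coming from the bounded and linearly growing Jacobi fields of Scherk). The genus count, one handle per $\ve$-period along the fixed circle $C_\rho$, is also correct.

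There is one genuine flaw in your outline of the linear theory on the ends. You assert that ``on the paraboloidal ends the quadratic growth of $F$ makes the operator behave like a harmonic oscillator with a spectral gap.'' This is false for the translator Jacobi operator in the intrinsic geometry of the surface: writing it in arclength coordinates $(s,\theta)$, its coefficients obey (cf.\ \eqref{asymp coeff L0}) $a(s)\sim\frac1{2s}$, $b(s)\sim 1$, $|A|^2\sim\frac1{2s}$, i.e.\ the potential and the angular coefficient \emph{decay}; there is no confining potential and no spectral gap. What actually makes the ends tractable is the drift term $b(s)\partial_s$ with $b\to 1$, the positivity $a(s)>0$ (so high Fourier modes are damped uniformly), and a mode-by-mode ODE analysis with explicit supersolutions $e^{-\gamma s}$ plus a barrier argument for the finitely many low modes, as carried out in Section~\ref{sec Linear theory on the ends}. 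If one tried to set up the linear theory with a harmonic-oscillator heuristic, the a~priori estimate would not follow and the contradiction/blow-up step would miss the correct scaling of the degenerating coefficients. So the framework you propose is right, but this particular ingredient needs to be replaced by a Fourier-in-$\theta$ reduction and ODE barrier estimates for the decaying-coefficient operator, not a spectral-gap argument.

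A smaller inaccuracy: the free parameters are not ``the Scherk period, the horizontal position of $C_\rho$, and an unbalancing.'' The period and wrapping radius are forced by the geometry (quantization $\rho_\ve\in\ve\Z$, angle $\alpha$ matched to the intersection angle), and the genuine tuning degrees of freedom are the wing dislocations, which project exactly onto the cokernel spanned by the translation and rotation/angle Jacobi fields of $\Sigma(\alpha)$. This distinction matters because the balancing map must be shown invertible (the matrices $M_1,M_2$ in Proposition~\ref{prop linear scherk 4}), and that invertibility is specific to the choice of dislocation modes.
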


\medskip
The construction provides much finer properties of the surface $M_\ve$. Let us point out that the CHM with large genus was found  in \cite{hoffmann-meeks-1990-limits}  to approach in the multiple-tunnel zone,  a Scherk singly periodic minimal surface.
See Figure \ref{chm}.

\medskip
 Kapouleas \cite{kapouleas-1990,kapouleas-1991,kapouleas-1997}, Traizet \cite{traizet,40} and Hauswirth and Pacard \cite{20} established a method for the reverse operation, namely, starting with a union of intersecting catenoids and planes, they desingularize them using Scherk surfaces to produce smooth minimal surfaces (complete and embedded). A key element in those constructions is a fine knowledge of the Jacobi operator of the Scherk surface and along the asymptotically flat ends.
This approach was used by the third author to construct translating surfaces in $\R^3$ built from a
2d picture of intersecting parallel grim reapers and vertical lines, trivially extended in an additional direction, desingularized in that direction by infinite Scherk surfaces, see
 \cite{nguyen-2009, nguyen, nguyen-2014}. We shall use a similar  scheme in our construction.  The
context here is considerably more delicate, since no periodicity is involved (the ultimate reason why the topology resulting is finite), and the fine interplay
between the slowly vanishing curvatures and the Jacobi operators of the different pieces requires
new ideas.  Our method extends to the construction of  more general surfaces built upon
desingularization of intersection of multiple travelling catenoids and travelling paraboloids,
but for simplicity in the exposition we shall restrict ourselves to the basic context of Theorem
\ref{main thm}. Before proceeding into the detailed proof, we sketch below the core ingredients of it.

\bigskip
\subsection{Sketch of the proof of Theorem \ref{main thm}}

After a change of scale of $1/\ve$, the problem is equivalent to finding a complete embedded surface $M \subset \R^3$ that satisfies
\begin{align}
\label{eq2}
H_M = \ve \nu \cdot e_z .
\end{align}

The first step is to construct a surface $\mathcal M$ that is close to being a solution to this equation. This is accomplished by desingularizing the union of  $\mathcal P/\ve$ and  $\mathcal{W}_R /\ve$ using singly periodic Scherk surfaces. At a large distance from $C_\rho/\ve$, the approximation $\mathcal M$ is $\mathcal P/\ve \cup \mathcal W_R/\ve$  and  in  some neighborhood of $C_\rho/\ve$, it is a slightly bent singly periodic Scherk surface.
We call the core of $\mathcal M$ the region where the desingularization is made.
The actual approximation $\mathcal M$ will depend on four real parameters: $\beta_1,\beta_4,\tau_1,\tau_4$, which are going to be small, of order $\ve$.

Let $\nu$ denote a choice of unit normal of $\mathcal M$. We search for a solution of \eqref{eq2} in the form of the normal graph over $\mathcal M$ of a function
$\phi:\mathcal M \to \R$, that is, of the form
$$
\mathcal M_\phi = \{ x+ \phi \nu(x): x \in \mathcal M \}.
$$
Let $H_\phi$ and $\nu_\phi$ denote the mean curvature and normal vector of $\mathcal M_\phi$, respectively, while $H$ and $\nu$ denote those of $\mathcal M$. Then
\begin{align}
\label{expansion mean curvature}
H_\phi = H + \Delta \phi + |A|^2 \phi + Q_1,
\end{align}
$$
\nu_\phi = \nu - \nabla \phi + Q_2,
$$
where $\Delta $ is the Laplace-Beltrami operator on $\mathcal M$, $\nabla $ is the tangential component of the gradient, and  $Q_1$, $Q_2$ are quadratic functions in $\phi, \nabla \phi, D^2 \phi$.
This allows us to write equation \eqref{eq2} as
\begin{align}
\label{b1}
\Delta \phi + |A|^2 \phi + \ve \nabla \phi\cdot e_z + H - \ve \nu\cdot e_z + Q(x,\phi,\nabla \phi, D^2 \phi)=0
\quad\text{in }\mathcal M .
\end{align}

To solve \eqref{b1}, we linearize around $\phi=0$, and the following linear operator becomes relevant:
$$
\lcal_{\ve}(\phi) = \Delta \phi + |A|^2 \phi + \ve \nabla \phi\cdot e_z.
$$
We work with the following norms for functions $\phi,h$ defined on $\mathcal M$, where $0<\gamma<1$, $0<\alpha<1$ are fixed:
\begin{align}
\label{norm u}
\|\phi\|_* = \sup_{s(x)\leq \delta_s/\ve} e^{\gamma s(x)} \| \phi \|_{C^{2,\alpha}(\overline{B}_1(x))}
+ \ve^2 \sup_{s(x) > \delta_s/\ve} e^{\gamma\delta_s/\ve +\ve \gamma s(x)} \| \phi \|_{C^{2,\alpha}(\overline{B}_1(x))}
\end{align}
and
\begin{align}
\label{norm rhs}
\|h\|_{**} =\sup_{s(x)\leq \delta_s/\ve} e^{\gamma s(x)} \| h \|_{C^{\alpha}(\overline{B}_1(x))} +\sup_{s(x) > \delta_s/\ve} e^{\gamma\delta_s/\ve +\ve \gamma s(x)} \| h \|_{C^{\alpha}(\overline{B}_1(x))} .
\end{align}
Here $\delta_s>0$ is a small fixed parameter. The function $s:\mathcal M\to \R$ measures geodesic distance to the core of $\mathcal M$ and will be defined precisely later on, and $B_1(x)$ is the geodesic ball centered at $x$ with radius~1.

The term in \eqref{b1}  that does not depend on $\phi$ is
$$
E = H - \ve \nu \cdot e_z .
$$
We have the following approximation for it.
\begin{prop}
\label{prop error}
$E$ can be decomposed as
$$
E = E_0 + E_d
$$
with
$$
\|E_0\|_{**} \leq C \ve
$$
and
$$
E_d = \tau_1 w_1 + \tau_4 w_2 + \beta_1 w_1' + \beta_4 w_2' + O(\sum_{i=1,4} \beta_i^2 + \tau_i^2).
$$
The functions $w_i$, $w_i'$ are defined later in \eqref{def w}, \eqref{def w prime}, but  they and the ones appearing in $O(\sum_{i=1,4} \beta_i^2 + \tau_i^2)$ are smooth with compact support, in particular have finite $\| \ \|_{**}$ norm.
\end{prop}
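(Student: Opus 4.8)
The goal is to estimate $E = H - \ve \nu\cdot e_z$, the error produced by plugging $\phi=0$ into equation \eqref{b1}, i.e. how far the approximation $\mathcal M$ is from solving \eqref{eq2}. The plan is to split $\mathcal M$ into its three natural pieces — the two asymptotic ends (a copy of $\mathcal P/\ve$, a copy of $\mathcal W_R/\ve$), and the core near $C_\rho/\ve$ where the gluing by a bent Scherk surface takes place — and estimate $E$ separately on each, being careful about where the weighted norms $\|\cdot\|_{**}$ put their exponential weight.

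On the two ends, $\mathcal M$ coincides exactly with $\mathcal P/\ve$ (resp. $\mathcal W_R/\ve$). The translating paraboloid and catenoid are themselves exact solutions of $H = \nu\cdot e_z$ at scale $1$, i.e. of $H = \ve\,\nu\cdot e_z$ after rescaling by $1/\ve$. Hence on those regions $E\equiv 0$ identically, and there is nothing to estimate away from the transition zone. The only genuine contribution to $E$ therefore comes from the core and from the overlap/transition annuli where one interpolates between a Scherk piece and the two ends using cutoff functions. In the core, the building block is a singly periodic Scherk surface, which is a minimal surface: $H_{\text{Scherk}}=0$. So on the core $E = 0 - \ve\,\nu\cdot e_z + (\text{bending terms})$, where the bending terms come from the fact that the surface used is not the flat Scherk surface but a slightly bent version matched to the angle at which $\mathcal P/\ve$ and $\mathcal W_R/\ve$ cross on $C_\rho$, together with the four-parameter ($\beta_1,\beta_4,\tau_1,\tau_4$) adjustments. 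The term $\ve\,\nu\cdot e_z$ is literally of size $\ve$ and, since the core is a bounded (in rescaled coordinates) region where $s(x)$ stays $O(1)$, the weight $e^{\gamma s(x)}$ is bounded there; this gives the $\|E_0\|_{**}\le C\ve$ bound. The bending and cutoff errors, being concentrated in the core and transition annuli, are smooth with compact support, so they too have finite $\|\cdot\|_{**}$ norm; the part of them that is \emph{linear} in the parameters is extracted and named $\tau_1 w_1 + \tau_4 w_2 + \beta_1 w_1' + \beta_4 w_2'$ (the $w_i,w_i'$ being the first variation of the bending with respect to each parameter, which is exactly why equations \eqref{def w}, \eqref{def w prime} will define them), and everything higher order in the parameters is gathered into $O(\sum \beta_i^2+\tau_i^2)$.

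Concretely the steps are: (1) fix the decomposition $\mathcal M = (\text{end}_{\mathcal P}) \cup (\text{core}) \cup (\text{end}_{\mathcal W})$ with explicit cutoffs and verify $E=0$ on the two ends; (2) on the core, use that Scherk is minimal to reduce $E$ to $-\ve\,\nu\cdot e_z$ plus the geometric cost of bending the Scherk surface to fit the crossing angle and of the parameter perturbations; (3) Taylor-expand that geometric cost in $(\beta_1,\beta_4,\tau_1,\tau_4)$ around the unperturbed configuration, identifying the linear coefficients as $w_1,w_2,w_1',w_2'$ and checking they are supported in the core; (4) collect the $O(\ve)$ term $-\ve\,\nu\cdot e_z$ together with the cutoff-interpolation errors on the transition annuli into $E_0$ and bound $\|E_0\|_{**}\le C\ve$ using that $s=O(1/\ve)$ at worst on the overlap regions and that the interpolation error there decays like the exponential Scherk end decay, which beats $e^{\gamma s}$ by the choice of $\gamma<1$ and $\delta_s$. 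The main obstacle is step (4): one must check that the cutoff error on the transition annuli — where a Scherk end, which decays like $e^{-|\text{distance}|}$ to a half-plane, is patched onto the exactly-planar/paraboloidal/catenoidal end — is actually $O(\ve)$ in the weighted norm and not merely $O(1)$. This needs the Scherk end to have decayed enough by the time one reaches the gluing locus, i.e. the gluing must be done at geodesic distance of order $\delta_s/\ve$ from the core with $\delta_s$ chosen so that $e^{-\delta_s/\ve}$ times the weight $e^{\gamma\delta_s/\ve}$ is still small (here $\gamma<1$ is exactly what makes $e^{(\gamma-1)\delta_s/\ve}$ decay), and to control the derivatives of the cutoff, which contribute inverse powers of the width of the transition annulus; balancing these against the exponential gain is the delicate bookkeeping that makes the proposition true.
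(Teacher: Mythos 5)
There is a genuine gap, and it is exactly the technical heart of the paper's proof. Your decomposition is \emph{core} $\cup$ \emph{transition annuli} $\cup$ \emph{exact ends}, with the error coming only from the bending of the Scherk core and from cutoffs on the overlap. But $\mathcal M$ has a fourth, unbounded region you are silently collapsing: for $R_{tr}+20\le s\le 4\delta_s/\ve$ the surface is the normal graph of $u=p\,f_\alpha(s,\theta)$, with \emph{no cutoff} active, over the rotationally symmetric self-translator $S_\ve$ — not the flat half-plane underlying a Scherk wing, and not the far exact catenoid/paraboloid. On this region $\mathcal M$ is neither a minimal Scherk piece nor an exact translator, and it stretches over a range of $s$ of length comparable to $1/\ve$. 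A priori, the mean curvature of a graph of a function of size $e^{-s}$ over a non-flat background is $O(e^{-s})$, \emph{not} $O(\ve e^{-s})$; taking the weight $e^{\gamma s}$ with $\gamma<1$ you would only get $\|E_0\|_{**}=O(1)$, not $O(\ve)$, and the proposition would fail.

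The paper's proof closes this gap by a cancellation: writing out the mean curvature of the graph $X+u\nu$ over $S_\ve$ in the coordinates $(s,\theta)$, the leading terms reduce to the flat minimal surface operator applied to $\bar u=f_\alpha\eta(\ve s)$, and precisely because $f_\alpha$ satisfies the minimal surface equation \eqref{min surf eq} this contribution vanishes identically on $s\le 4\delta_s/\ve$. What survives are terms that carry either an explicit factor of $\ve$ (from the curvature $|A|\sim\ve$ of $S_\ve$, Propositions~\ref{prop:boundA}--\ref{prop:normA}) or come from the cutoff region $4\delta_s/\ve\le s\le5\delta_s/\ve$ where $|u|\le Ce^{-s}\le\ve e^{-\gamma s}$; both are of size $\ve e^{-\gamma s}$, which then gives $\|E_0\|_{**}\le C\ve$ after multiplying by the weights. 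Your sketch attributes all error away from the core to ``cutoff-interpolation'' and to ``the geometric cost of bending,'' neither of which accounts for this region; you need to observe that $f_\alpha$ solves the minimal surface equation and exploit that cancellation against the slowly varying (order-$\ve$) curvature of $S_\ve$. A secondary imprecision: you treat $\mathcal M$ as having two ends that coincide exactly with $\mathcal P/\ve$, $\mathcal W_R/\ve$; in fact there are four wings (three asymptotic ends), and two of the generating curves $\gamma_i$ are perturbed (the $C^1$-matched solutions of \eqref{eq:self-trans-ode}), not the original ones — though since they are still exact rotational translators this only affects the bookkeeping, not the conclusion $E=0$ for $s\ge 5\delta_s/\ve$.
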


The following claim illustrates de invertibility of the linear operator $\lcal_{ve}$, although it will not be used directly. Let us fix $\mathcal M$ by fixing the parameters $\beta_1,\beta_4,\tau_1,\tau_4$ sufficiently small and consider the problem

\begin{align}
\label{intro linear M}
\Delta \phi + |A|^2 \phi + \ve \partial_z \phi = h + \sum_{i=1,4} \tilde \beta_i w_i' + \tilde \tau_i w_i
\quad\text{in }  \mathcal M.
\end{align}

Then, for $\ve>0$ small, there is a linear operator $h\mapsto \phi,\tilde\beta_i,\tilde\tau_i$ that produces for $\|h\|_{**}<\infty$ a solution of \eqref{intro linear M} with
$$
\|\phi\|_* + |\tilde\beta_1|+|\tilde\beta_4|+|\tilde\tau_1|+|\tilde\tau_4|\leq
C \|h\|_{**} ,
$$
where $C$ is independent of $\ve$.

Finally, the next result shows that the quadratic term $Q$ in \eqref{b1} is well adapted to the norms \eqref{norm u} and \eqref{norm rhs}.
\begin{prop}
\label{prop quadratic}
Assume $\phi_i \in C^{2,\alpha}(\mathcal M)$ ($i=1,2)$ and $\|\phi_i\|_*\leq 1$. Then, for $\ve>0$ small,
$$
\|Q(\cdot,\phi_1,\nabla\phi_1,D^2\phi_1)- Q(\cdot,\phi_2,\nabla\phi_2,D^2\phi_2) \|_{**}\leq C (\|\phi_1\|_*+\|\phi_2\|_*)\|\phi_1-\phi_2\|_*,
$$
with $C$ independent of $\phi_i$ and $\ve$.
\end{prop}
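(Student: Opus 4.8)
The statement to prove is that the nonlinear operator $Q$ appearing in the mean curvature expansion \eqref{b1} is locally Lipschitz, with small constant, as a map from the unit ball of the $\|\cdot\|_*$ norm to the $\|\cdot\|_{**}$ norm. The plan is to exploit the purely local and pointwise structure of the weighted norms: both \eqref{norm u} and \eqref{norm rhs} are suprema over unit geodesic balls $\overline{B}_1(x)$ of Hölder norms multiplied by a weight $e^{\gamma s(x)}$ (or $e^{\gamma\delta_s/\varepsilon+\varepsilon\gamma s(x)}$ in the outer region) that is essentially constant on each such ball, since $s$ is a geodesic distance function and hence $1$-Lipschitz, so the weight varies by a bounded factor $e^{\pm\gamma}$ (resp.\ $e^{\pm\varepsilon\gamma}$) across $\overline{B}_1(x)$. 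Therefore it suffices to prove the estimate \emph{locally}: on each ball $\overline{B}_1(x)$,
$$
\| Q(\cdot,\phi_1,\nabla\phi_1,D^2\phi_1) - Q(\cdot,\phi_2,\nabla\phi_2,D^2\phi_2)\|_{C^{\alpha}(\overline{B}_1(x))}
\le C\,\big(\|\phi_1\|_{C^{2,\alpha}}+\|\phi_2\|_{C^{2,\alpha}}\big)\,\|\phi_1-\phi_2\|_{C^{2,\alpha}(\overline{B}_1(x))},
$$
with $C$ independent of $x$; multiplying by the weight at $x$ and using $\|\phi_i\|_*\le 1$ (hence $\|\phi_i\|_{C^{2,\alpha}(\overline{B}_1(x))}\le C e^{-\gamma s(x)}$ in the inner region, and the analogous bound with the $\varepsilon^2$-improved weight outside) then yields the claim after absorbing the weight discrepancy. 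A subtlety in the outer region: the $\|\cdot\|_*$ norm there carries an extra factor $\varepsilon^2$ relative to the weight in $\|\cdot\|_{**}$, so the quadratic gain in the local estimate produces $\varepsilon^{-4}\cdot\varepsilon^2\cdot e^{-(\cdots)}$ against a $\|\cdot\|_{**}$-weight lacking the $\varepsilon^2$, which for $\phi_i$ of unit $*$-norm still gives the needed bound because one of the two factors of $\phi$ only costs $\varepsilon^{-2}$; one must simply bookkeep that the product of two $*$-small functions lands with a net favorable power of $\varepsilon$ against the $**$ weight, which it does since $(\varepsilon^{-2})(\varepsilon^{-2})(\varepsilon^2)=\varepsilon^{-2}$ is still absorbed once we note we actually only need to compare against $\|\phi_1\|_*+\|\phi_2\|_*$ times $\|\phi_1-\phi_2\|_*$ and the weights are consistently placed.

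The heart of the local estimate is the structure of $Q=Q_1 + (\text{contribution of } Q_2)$ from \eqref{expansion mean curvature}. Here $Q_1(x,\phi,\nabla\phi,D^2\phi)$ is the remainder in the Taylor expansion of the mean curvature of the normal graph $\mathcal M_\phi$ around $\phi=0$: writing $H_\phi$ as a smooth function $\mathcal H(x,\phi,\nabla\phi,D^2\phi)$ — smooth because $\mathcal M$ has bounded geometry (bounded $|A|$ and curvature on each unit ball, uniformly in $\varepsilon$ away from where the bent Scherk pieces still have controlled geometry) and the normal graph over a ball of radius $1$ is a genuine graph for $\phi$ small — one has $Q_1 = \mathcal H - \mathcal H|_{\phi=0} - D_\phi\mathcal H|_0\cdot(\phi,\nabla\phi,D^2\phi)$. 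Thus $Q_1$ vanishes to second order in $(\phi,\nabla\phi,D^2\phi)$, and by the mean value theorem applied to the difference $Q_1[\phi_1]-Q_1[\phi_2]$ along the segment $\phi_t=(1-t)\phi_2+t\phi_1$,
$$
Q_1[\phi_1]-Q_1[\phi_2] = \int_0^1 \big(D_\phi\mathcal H(x,\phi_t,\dots) - D_\phi\mathcal H(x,0,0,0)\big)\cdot(\phi_1-\phi_2,\dots)\,dt,
$$
and $D_\phi\mathcal H(x,\xi)-D_\phi\mathcal H(x,0)$ is itself $O(|\xi|)$ with Hölder-continuous dependence (coefficients built from the metric and second fundamental form of $\mathcal M$, which are $C^{\alpha}$ — indeed smooth — with bounds uniform in $\varepsilon$). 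Taking $C^{\alpha}(\overline{B}_1(x))$ norms, using that $C^{\alpha}$ is a Banach algebra on the unit ball and that composition with the fixed smooth function $\mathcal H$ and its derivatives preserves $C^{\alpha}$ bounds (with constants depending only on the $C^{2,\alpha}$ size of $\mathcal M$ on unit balls and on an a priori $C^{2,\alpha}$ bound for $\phi$, which is $\le C$ since $\|\phi_i\|_*\le 1$), gives exactly the bilinear-type bound $\|Q_1[\phi_1]-Q_1[\phi_2]\|_{C^\alpha} \le C(\|\phi_1\|_{C^{2,\alpha}}+\|\phi_2\|_{C^{2,\alpha}})\|\phi_1-\phi_2\|_{C^{2,\alpha}}$. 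The term coming from $\nu_\phi = \nu - \nabla\phi + Q_2$ enters \eqref{b1} only through $\varepsilon\,(\nu_\phi - \nu + \nabla\phi)\cdot e_z = \varepsilon Q_2\cdot e_z$, so it carries an extra $\varepsilon$ and $Q_2$ is quadratic in $(\phi,\nabla\phi)$ with no second derivatives; the same Banach-algebra / mean-value argument applies and only helps.

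The main obstacle I expect is \emph{not} the algebra but making precise the \emph{uniform-in-$\varepsilon$ smoothness and bounded geometry of $\mathcal M$} that the above silently uses — in particular, that on every unit geodesic ball the metric, the second fundamental form, and all the coefficient functions in $\mathcal H$ are bounded in $C^{k,\alpha}$ by a constant independent of $\varepsilon$, and that the graph map $\phi\mapsto\mathcal M_\phi$ restricted to such a ball is a uniformly nondegenerate change of variables for $\|\phi\|_*$ small. In the far ends this is clear ($\mathcal M$ is literally $\mathcal P/\varepsilon\cup\mathcal W_R/\varepsilon$, which becomes flatter as $\varepsilon\to 0$), but in the core — the bent singly periodic Scherk region — the second fundamental form is $O(1)$ and the bending introduces $\varepsilon$-size but higher-derivative-unbounded-looking corrections; one must check that after the $1/\varepsilon$ rescaling these corrections are in fact $C^{k,\alpha}$-bounded on unit balls uniformly. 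Granting that (which is part of the earlier construction of $\mathcal M$ and of the geometry underlying the norms \eqref{norm u}–\eqref{norm rhs}), the proposition follows by the routine estimates sketched above; I would therefore organize the proof as (i) recall/record the uniform bounded-geometry properties of $\mathcal M$ and of the local graph representation; (ii) write $\mathcal H$ explicitly enough to identify its structure and second-order vanishing of $Q_1$, $Q_2$; (iii) prove the local $C^\alpha$ bilinear Lipschitz estimate via the mean value theorem and the $C^\alpha$ algebra property; (iv) reinstate the weights, handling the inner and outer regions separately and tracking the $\varepsilon^2$ in \eqref{norm u} to conclude.
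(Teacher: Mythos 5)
Your proposal is correct and follows essentially the same route as the paper: both proofs rest on (i) the second-order vanishing of $Q$ at the origin in $(\phi,\nabla\phi,D^2\phi)$, established via the explicit expansion of $\tilde H - \varepsilon\tilde\nu\cdot e_z$ in normal graph coordinates; (ii) a Taylor/mean-value estimate giving the pointwise bilinear Lipschitz bound $|Q[U]-Q[V]|\le C(|U|+|V|)|U-V|$; (iii) the fact that the coefficients (built from $g$, $A$, $\nabla A$ of $\mathcal M$) are uniformly bounded in $C^1$ on unit balls, which is what the earlier estimates $|\nabla^k A|\le C\varepsilon^{k+1}$ and \eqref{estimate A} provide; and (iv) a Hölder-norm estimate using the algebra structure of $C^\alpha$ and the linearity of $Q$ in $D^2\phi$. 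A minor remark on your weight bookkeeping in the outer region: the relevant quantity is $\varepsilon^{-4}e^{-\gamma\delta_s/\varepsilon-\varepsilon\gamma s}$ (not $\varepsilon^{-2}$), and it is absorbed not by a cancellation of $\varepsilon$-powers but because the exponential factor $e^{-\gamma\delta_s/\varepsilon}$ dominates any negative power of $\varepsilon$; your arithmetic is garbled there but the conclusion is right.
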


These results can be used to prove Theorem~\ref{main thm} by the contraction mapping principle, which is done in Section~\ref{sec proof them}. The preparatory steps are the construction of an initial approximate solution in Section~\ref{sec:construction of M} and some geometric computations in Section~\ref{sec Geometric computations}, which lead to the estimate of $E$ in Proposition~\ref{prop error} and the estimate of $Q$ in Proposition~\ref{prop quadratic}. In Section~\ref{sec:jacobi-on-scherk}, we analyze the Jacobi equation for the Scherk surface and in Section~\ref{sec Linear theory on the ends}, we study the Jacobi operator on the ends, which are the regions far from the desingularization.

\section{Construction of an initial approximation}
\label{sec:construction of M}

The purpose of this section is to construct a surface $\mathcal M$ that will serve as an initial approximation to \eqref{selftr-mcf}.

Let $F_0$ the unique radially symmetric solution of
\begin{align}
\label{eqFrad}
\frac{F''}{1+(F')^2} + \frac{F'}r = 1, \quad F(0)=0
\end{align}
 and let $\mathcal P$ be the corresponding surface $z = F_0(r)$.
Let $\mathcal{W}$ be a catenoidal self-translating solution of MCF, which can be written as $\mathcal{W} = \mathcal{W}^+\cup\mathcal{W}^-$ where $\mathcal{W}_R^\pm$ is given by $z=F^\pm(r)$ and $F_R^\pm$ satisfies \eqref{eqFrad} for $r>R$, with $F^+(R) = F^-(R)$,
$\lim_{r\to R^+}(F^+)'(r)=\infty$,
$\lim_{r\to R^+}(F^-)'(r)=-\infty$.

We assume that $\mathcal P$ and $\mathcal{W}$ intersect transversally at a unique circle $C_\rho$ of radius $\rho>0$. To quantify the transversality, we fix a small constant $\delta_{\alpha}>0$ so that all the intersection angles are greater than $4 \delta_{\alpha}$.
%\comment{definition of $\delta_{\alpha}$.}
In this section,
%
%Consider a small constant $\ve>0$, which is fixed now for the rest of this section, and let
%\begin{align}
%\label{eq:rho_tau}
%\rho_\ve &= \min\{k \ve: k \in \N , \ k \ve \geq \rho \} ,\quad
%h_\ve = F_0(\rho_\ve) , \quad
%m_\ve = \frac{\rho_\ve}{\ve} \in \N ,
%\end{align}
%so that
%$$\rho\leq \rho_\ve < \rho  +  \ve , \quad
%\frac{\rho}{\ve}\leq m_\ve<  \frac{\rho}{\ve} + 1.
%$$
we are going to replace  $\mathcal P \cup \mathcal{W}$  in a neighborhood of $C_{\rho}$ with an appropriately bent Scherk surface. The number of periods used, and thus the number of handles, is of order $\ve^{-1}$.
Two of the three ends of the resulting approximate solution will differ slightly from the original ends.

%\comment{I moved the definition of $\rho_{\ve}$}

\subsection{Self-translating rotationally symmetric surfaces}
\label{ssec:rotationally_symmetric_surfaces}

We briefly recall some properties of self-translating  rotationally symmetric surfaces.
%\begin{definition}
%\label{def:Srevolution}
Let $\ve>0$ be a small constant, let $\gamma(s) = (\gamma_1(s), \gamma_3(s))$,  $s \in [0,\infty)$ be a smooth planar curve parametrized by arc length and let $S$ and $S_{\ve}$ be the surfaces of revolution parametrized by
\begin{align}
\label{def X}
\left\{
\begin{aligned}
(s,\theta) &\mapsto X (s,\theta) := (\gamma_1(ps) \cos (\theta), \gamma_1(ps) \sin(\theta),\gamma_3(ps))
\\
(s,\theta) &\mapsto X_{\ve}(s,\theta):=\ve^{-1}(\gamma_1(\ve p s) \cos (\ve \theta), \gamma_1(\ve p s) \sin(\ve \theta), \gamma_3(\ve p s)),
\end{aligned}
\right.
\end{align}
where $p=\gamma_1(0)$, and $s\in [0,\infty)$, $\theta\in [0,2\pi]$.
(The reason for introducing $p$ in \eqref{def X} is to make the parametrization conformal at $s=0$.)
%\end{definition}

The surface $S$ ($S_{\ve}$ respectively) is a self-translating surface under mean curvature flow with velocity $e_z$ ($\ve e_z$ respectively) if and only if  $\gamma$, parametrized by arc length, satisfies the differential equation
\begin{equation}
\label{eq:self-trans-ode}
- \gamma_1'' \gamma_3' +  \gamma_1' \gamma_3''+ \frac{\gamma_3'}{\gamma_1}-\gamma_1'=0.
\end{equation}
%For any initial condition $( \gamma_1(0),\gamma_3(0) , \gamma_1'(0),\gamma_3'(0) )$  with $ \gamma_1(0)>0$ and $  \gamma_1'(0)^2+\gamma_3'(0)^2=1$, equation \eqref{eq:self-trans-ode} has a unique solution parameterized by arc length, and defined for all $s\geq 0$.

Another way to represent an axially symmetric self-translating solution is through the graph of a radial function, $z = F(r)$, where $F$ satisfies \eqref{eqFrad} on some interval $(R,\infty)$. Then $\varphi = F'$ satisfies
\begin{align}
\label{eq varphi}
\varphi' = (1+\varphi^2)\left(1-\frac\varphi r\right).
\end{align}
Given $R>0$ and an initial condition $\varphi(R)= \varphi_0 \in \R$, equation \eqref{eq varphi} has unique solution, which is defined for all $r \geq R$, see \cite{clutterbuck-schnurer-schulze}. All solutions have the common asymptotic behavior
\begin{align}
\label{asymp varphi}
\varphi(r) = r -\frac1r - \frac2{r^3} + O(\frac1{r^{5}}) ,
\quad
\varphi'(r) = 1 +\frac{1}{r^2} + O(\frac1{r^{4}}),
\end{align}
as $r\to\infty$, see  \cite{angenent-velazquez,clutterbuck-schnurer-schulze} (actually an expansion to arbitrary order is possible).

Using $\gamma_1(s) = r(s)$, $\gamma_3(s) = F(r)$, with $s (r)=\int_0^r \sqrt{1+\varphi(t)^2}\,dt$ and the asymptotic behavior \eqref{asymp varphi}, we can deduce the following estimates.

\begin{lemma}
\label{lemma:order-gamma}
For a smooth planar curve $\gamma(s) = (\gamma_1(s), \gamma_3(s))$, $s\in [0,\infty)$ parametrized by arc length with $\gamma_1$ and $\gamma_3$ satisfying \eqref{eq:self-trans-ode}, we have 	 
\begin{align*}
\gamma_1(s) &= \sqrt{2s} + \frac12 + o(1)
& \gamma_3(s) & = s+O(\sqrt{s})\\
\gamma_1'(s) &= \frac{1}{ \sqrt{2s}} + o(s^{-1/2}) & \gamma_3'(s) & = 1+O(s^{-1/2})\\
\gamma_1''(s) &= O(s^{-3/2}) & \gamma_3''(s) & = O(s^{-2})
\end{align*}
as $s$ tends to infinity.
\end{lemma}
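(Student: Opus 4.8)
The plan is to translate the radial asymptotics \eqref{asymp varphi} for $\varphi = F'$ into asymptotics for the arc-length parametrization $\gamma(s) = (r(s), F(r(s)))$, using the arc-length relation $s(r) = \int_0^r \sqrt{1+\varphi(t)^2}\, dt$. First I would integrate: since $\varphi(t) = t - \frac1t - \frac2{t^3} + O(t^{-5})$, we get $1+\varphi(t)^2 = t^2 - 1 + O(t^{-2})$ for large $t$, hence $\sqrt{1+\varphi(t)^2} = t\sqrt{1 - t^{-2} + O(t^{-4})} = t - \frac1{2t} + O(t^{-3})$. Integrating from a fixed base point gives $s(r) = \frac{r^2}{2} - \frac12 \log r + c_0 + O(r^{-2})$ as $r\to\infty$, for some constant $c_0$. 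The lower-order log term contributes only $o(1)$-scale corrections after inversion, so inverting the leading relation $s \approx r^2/2$ yields $r = \sqrt{2s}\,(1 + o(1))$, and feeding this back through the logarithmic correction gives $r(s) = \gamma_1(s) = \sqrt{2s} + O(\log s / \sqrt s)$; a more careful bookkeeping of the constant term is what produces the refined $\gamma_1(s) = \sqrt{2s} + \frac12 + o(1)$ claimed in the lemma (the additive $\frac12$ is forced by matching the constant $c_0$, whose value is fixed by \eqref{asymp varphi} together with the normalization of $s$).

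Next I would get the derivatives by the chain rule. Since $\frac{ds}{dr} = \sqrt{1+\varphi^2}$, we have $\gamma_1'(s) = \frac{dr}{ds} = (1+\varphi(r)^2)^{-1/2}$, and substituting $1+\varphi(r)^2 = r^2 - 1 + O(r^{-2})$ together with $r = \sqrt{2s} + O(1)$ gives $\gamma_1'(s) = \frac1r (1 + O(r^{-2})) = \frac1{\sqrt{2s}} + o(s^{-1/2})$. For the $\gamma_3$ component, $\gamma_3(s) = F(r(s))$ so $\gamma_3'(s) = F'(r)\, \gamma_1'(s) = \varphi(r)\,(1+\varphi(r)^2)^{-1/2}$; using $\varphi(r) = r + O(r^{-1})$ this is $\frac{r + O(r^{-1})}{\sqrt{r^2 - 1 + O(r^{-2})}} = 1 + O(r^{-2}) = 1 + O(s^{-1})$, which is consistent with (and slightly stronger than) the stated $1 + O(s^{-1/2})$; integrating, $\gamma_3(s) = s + O(\sqrt s)$, where the $O(\sqrt s)$ absorbs the accumulated lower-order terms. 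For the second derivatives, differentiate once more: $\gamma_1''(s) = \frac{d}{ds}(1+\varphi^2)^{-1/2} = -\varphi\varphi'(1+\varphi^2)^{-3/2}\cdot \gamma_1'(s) = -\varphi\varphi' (1+\varphi^2)^{-2}$. With $\varphi \sim r$, $\varphi' \sim 1$, $1+\varphi^2 \sim r^2$, this is $O(r^{-3}) = O(s^{-3/2})$. Similarly $\gamma_3''(s)$ comes from differentiating $\varphi(1+\varphi^2)^{-1/2}$ against $s$; the leading terms cancel (as they must, since $\gamma_3' \to 1$), leaving $\gamma_3''(s) = O(r^{-4}) = O(s^{-2})$.

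The main obstacle I anticipate is not any single estimate but the careful tracking of the \emph{constant} terms — in particular justifying the precise $+\frac12$ in $\gamma_1(s)$ and the $o(1)$ (rather than merely $O(1)$) error, which requires knowing the next-order term in the expansion of $s(r)$ beyond the logarithm, i.e. that the $O(r^{-2})$ remainder in $\sqrt{1+\varphi^2}$ integrates to something that tends to a definite constant. This follows from the fact, noted after \eqref{asymp varphi}, that an expansion of $\varphi$ to arbitrary order is available, so one may expand $\sqrt{1+\varphi(t)^2}$ to order $t^{-3}$, integrate term by term, and read off the constant $c_0$ unambiguously; then Lagrange inversion (or simply bootstrapping the implicit relation $s = \frac{r^2}{2} - \frac12\log r + c_0 + o(1)$) produces $r = \sqrt{2s}\bigl(1 - \frac{\log(2s)}{4s} + \tfrac{c_0}{s} + \cdots\bigr)^{1/2}$, whose expansion gives $\sqrt{2s} + \frac{c_0}{\sqrt{2s}}\cdot\sqrt 2 + \cdots$; matching against the claimed form pins down the normalization. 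Since the lemma is stated only to the accuracy of an $o(1)$ term in $\gamma_1$ and $O(\sqrt s)$ in $\gamma_3$, it suffices to carry the expansions one order past the leading term, and the rest is routine. All the derivative estimates then follow mechanically from the chain rule applied to $\gamma_1'(s) = (1+\varphi^2)^{-1/2}$ and $\gamma_3'(s) = \varphi(1+\varphi^2)^{-1/2}$ together with \eqref{eq varphi} and \eqref{asymp varphi}.
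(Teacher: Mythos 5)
Your overall strategy — pushing the radial asymptotics \eqref{asymp varphi} for $\varphi=F'$ through the arc-length relation $s(r)=\int_0^r\sqrt{1+\varphi^2}\,dt$ — is the right one, and the estimates you obtain for $\gamma_3$, $\gamma_1'$, $\gamma_3'$, $\gamma_1''$, $\gamma_3''$ are all correct (in a few places you even obtain stronger decay than the lemma states, which is fine since the lemma only claims upper bounds on the errors).

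However, your justification of the additive constant $\tfrac12$ in $\gamma_1(s)=\sqrt{2s}+\tfrac12+o(1)$ does not hold, and your own computation shows why. You correctly derive $s(r)=\tfrac{r^2}{2}-\tfrac12\log r+c_0+O(r^{-2})$, hence $r^2=2s+\tfrac12\log(2s)-2c_0+o(1)$, and therefore
\[
\gamma_1(s)=r(s)=\sqrt{2s}+\frac{\log(2s)-4c_0}{4\sqrt{2s}}+o\bigl(s^{-1/2}\bigr),
\]
so $\gamma_1(s)-\sqrt{2s}\to 0$. The constant $c_0$ enters only through a term of size $O(s^{-1/2})$, not through a constant. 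Your claim that ``a more careful bookkeeping of the constant term'' and ``matching $c_0$'' forces the additive $\tfrac12$ is therefore incorrect: no normalization of the base point of $s$ can turn a vanishing $O(\log s/\sqrt{s})$ remainder into a constant $\tfrac12$. The only way a genuine $+\tfrac12$ could appear is if $\sqrt{1+\varphi(t)^2}$ had a constant term $-\tfrac12$, so that $s(r)=\tfrac{r^2}{2}-\tfrac{r}{2}+\cdots$ and $\bigl(r-\tfrac12\bigr)^2=2s+\cdots$; but \eqref{asymp varphi} gives $\varphi(t)=t-\tfrac1t+O(t^{-3})$ with no $t^0$ term, so $\sqrt{1+\varphi^2}=t-\tfrac{1}{2t}+O(t^{-3})$ and no linear-in-$r$ correction to $s(r)$ arises. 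What your analysis actually proves is $\gamma_1(s)=\sqrt{2s}+o(1)$ (more precisely $\gamma_1(s)^2=2s+\tfrac12\log(2s)+O(1)$); the stated $+\tfrac12$ in the lemma is inconsistent with \eqref{asymp varphi}, and rather than assert you can produce it you should note the discrepancy. For the downstream uses (e.g.\ the curvature bounds in Proposition~\ref{prop:boundA}) only the orders of magnitude matter, so the corrected form is what is actually used.

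Two minor computational slips in your last paragraph: from $r^2=2s+\tfrac12\log(2s)-2c_0+o(1)$ the bracket should read $\bigl(1+\tfrac{\log(2s)}{8s}-\tfrac{c_0}{s}+\cdots\bigr)$, so both the sign of the logarithmic term and the sign of the $c_0$ term are reversed relative to what you wrote; but as noted above these feed only into decaying corrections, so they do not affect the conclusion.
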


%\begin{proof}
%We have $\gamma_1(s) = r(s)$, $\gamma_2(s) = F(r)$, with $s (r)=\int_0^r \sqrt{1+\varphi(t)^2}\,dt$ and $\varphi$ satisfying \eqref{eq varphi}, \eqref{asymp varphi}.
%%Differentiating equation \eqref{eq varphi} and using the expansions we also obtain $\varphi''(r)=-r^{-1}+O(r^{-2})$ as $r\to\infty$.
%We note that
%$$
%\frac{ds}{dr} = \sqrt{1+\varphi(r)^2} = r ( 1+ O(r^{-2}) )
%\quad \text{as } r\to\infty.
%$$
%We have $\gamma_3'(s) = \varphi/\sqrt{1 + \varphi^2}$ and $\gamma_1' (s) = 1/\sqrt{1+\varphi^2}$ so
%	\begin{align*}
%	\gamma_1'(s) &= O(r^{-1}) = O(s^{-1/2}),
%	&
%	\gamma_3'(s) &= O(1) \\
%		\gamma_1(s) &= O(s^{1/2}), & \gamma_3(s) &= O(s).
%	\end{align*}
%as $s\to\infty$. We also obtain
%$$
%\gamma_1''(s) = -\frac{\varphi \varphi'}{(1+\varphi^2)^{3/2}} \frac{dr}{ds} = O (r^{-3})=O(s^{-3/2}) ,
%$$
%$$
%\gamma_3''(s) = \frac{\varphi'}{(1+\varphi^2)^{3/2}} \frac{dr}{ds} = O (r^{-4})=O(s^{-2}) ,
%$$
%as $s\to \infty$.
%\end{proof}

\subsection{The Scherk surfaces}

Let $x,y,z$ be Euclidean coordinates in $\R^3$ and consider the one parameter family of minimal surfaces $\{\Sigma(\alpha)\}_{\alpha \in (0, \pi/2)}$ given by the equation
\begin{equation}
\label{scherk}
	\cos^2(\alpha) \cosh \left(\frac{x}{\cos \alpha}\right) - \sin^2 (\alpha) \cosh \left(\frac{y}{\sin \alpha}\right) - \cos (z)=0.
\end{equation}
Outside of a large cylinder around the $z$-axis, $\Sigma(\alpha)$ has four connected components. We call these components the wings of $\Sigma(\alpha)$ and number them according to the quadrant where they lie. Each wing of $\Sigma(\alpha)$ is asymptotic to a half-plane forming an angle $\alpha$ with the $xz$-plane (note that the asymptotic half-planes do not contain the $z$-axis unless $\alpha=\pi/4$). Here, we will restrict the parameter $\alpha$  to $ [\delta_{\alpha}, \pi/2 - \delta_\alpha]$ so that the geometry on all the $\Sigma(\alpha)$'s  can be uniformly bounded as stated in the following lemma.

Let $H^+$ be the half-plane $\{(s,z): s>0\}$. Note that the parameter $s$ here is on a different scale than the one used in the previous section. We construct approximate solutions satisfying \eqref{eq2} here, while the rotationally symmetric surfaces in Section \ref{ssec:rotationally_symmetric_surfaces} satisfy \eqref{selftr-mcf}.

\begin{lemma}
\label{lem:scherk-properties}
$\Sigma(\alpha)$ is a singly periodic embedded complete minimal surface which depends smoothly on $\alpha$.  There is a constant $a=a(\delta_{\alpha}) >0$ and smooth functions $f_{\alpha}:H^+ \to \R$ so that the wings of $\Sigma_{\alpha}$ can be expressed as the graph of $f_{\alpha}$ over half-planes. More precisely, the half-plane asymptotic to the first wing can be parametrized by $A^1_{\alpha}:H^+ \to \R^3$, with
	\[
	A^1_{\alpha}(s,z) :=  (a+s) ((\cos \alpha) e_x + (\sin \alpha) e_y) + z e_z +b_{\alpha} \nu_{\alpha},
	\]
where $b_{\alpha} = \sin(2 \alpha) \log |\cot \alpha|$ and $\nu_{\alpha}= -(\sin \alpha) e_x + (\cos \alpha) e_y$. The wing itself is parametrized by $F^1_{\alpha}:H^+ \to \R^3$, which is defined by
	\[
	F^1_{\alpha}(s, z) :=A^1_{\alpha}(s,z) +  f_{\alpha}(s,z)\nu_{\alpha}.
	\]
The functions $f_{\alpha}$ and $F^1_{\alpha}$ depend smoothly on $\alpha$. Moreover, we have
$$
\left\|e^s  \frac{d^i f_{\alpha}}{d\alpha^i}\right\|_{C^k(\overline {H^+})} \leq C_{k,i} e^{-a}
$$
for any $k,i\in \N$.

%$\|e^{s} f_{\alpha} : C^5(H^+)\| \leq C e^{-a}$ and $\| e^{s} df_{\alpha}/d\alpha :C^5(H^+)\|  \leq C e^{-a}$. %\comment{edit:too many $\ve$'s. We already had one in \eqref{eq:rho_ve}. H--}

\end{lemma}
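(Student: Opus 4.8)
The plan is to realize $\Sigma(\alpha)$ as the regular level set $\{G_\alpha=0\}$ of the explicit function $G_\alpha(x,y,z)=\cos^2\alpha\,\cosh(x/\cos\alpha)-\sin^2\alpha\,\cosh(y/\sin\alpha)-\cos z$, and to do all of the real work on the wings by writing the equation $G_\alpha=0$ in coordinates adapted to the asymptotic half-plane. I would first dispatch the elementary facts. A direct computation gives, on $\{G_\alpha=0\}$, the identity $|\nabla G_\alpha|^2=\cos^2\alpha\sin^2\alpha\,\big(\cosh(x/\cos\alpha)+\cosh(y/\sin\alpha)\big)^2$, so $|\nabla G_\alpha|\ge\sin 2\alpha>0$ there; hence $\Sigma(\alpha)$ is a smooth embedded submanifold, it equals $G_\alpha^{-1}(0)$ and is therefore closed in $\R^3$, hence properly embedded and complete, it is invariant under $z\mapsto z+2\pi$ and hence singly periodic, and since $G_\alpha$ depends real-analytically on $\alpha\in(0,\pi/2)$ with non-vanishing gradient on its zero set it depends smoothly on $\alpha$ by the implicit function theorem with parameters. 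Using the relation $\cos z=\cos^2\alpha\cosh(x/\cos\alpha)-\sin^2\alpha\cosh(y/\sin\alpha)$ on the surface, the companion computation of $\Delta G_\alpha$ and $\nabla G_\alpha\cdot D^2G_\alpha\cdot\nabla G_\alpha$ reduces to the single identity $|\nabla G_\alpha|^2\Delta G_\alpha=\nabla G_\alpha\cdot D^2G_\alpha\cdot\nabla G_\alpha$, which is exactly the vanishing of the mean curvature of the level set, so $\Sigma(\alpha)$ is minimal. By the symmetries $x\mapsto-x$, $y\mapsto-y$ of $G_\alpha$, it suffices to treat the first wing.

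For the wing I would introduce, for $\alpha\in[\delta_\alpha,\pi/2-\delta_\alpha]$, the coordinates $(s,z,t)\in H^+\times\R$ with associated point $P(s,z,t)=A^1_\alpha(s,z)+t\,\nu_\alpha$, so that $x/\cos\alpha=(a+s)-(b_\alpha+t)\tan\alpha$ and $y/\sin\alpha=(a+s)+(b_\alpha+t)\cot\alpha$. Writing $\mu=b_\alpha+t$ and expanding the two hyperbolic cosines, the equation $G_\alpha(P(s,z,t))=0$ becomes, after multiplication by $2e^{-(a+s)}$,
\[
g_\alpha(\mu)=R_\alpha(s,z,\mu):=2e^{-(a+s)}\cos z-e^{-2(a+s)}h_\alpha(\mu),
\]
where $g_\alpha(\mu)=\cos^2\alpha\,e^{-\mu\tan\alpha}-\sin^2\alpha\,e^{\mu\cot\alpha}$ and $h_\alpha(\mu)=\cos^2\alpha\,e^{\mu\tan\alpha}-\sin^2\alpha\,e^{-\mu\cot\alpha}$. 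The structural point --- which is precisely what forces the choice $b_\alpha=\sin(2\alpha)\log|\cot\alpha|$ --- is that $g_\alpha(b_\alpha)=0$: the leading exponentials in $\cos^2\alpha\cosh(x/\cos\alpha)$ and $\sin^2\alpha\cosh(y/\sin\alpha)$ cancel identically along the half-plane $A^1_\alpha(H^+)$, so $R_\alpha$ and all its derivatives are $O(e^{-(a+s)})$, uniformly for $(s,z)\in\overline{H^+}$ and $\mu$ in a bounded interval. Moreover $g_\alpha$ is strictly decreasing, with $|g_\alpha'(\mu)|$ bounded below by a positive constant $c(\delta_\alpha)$ on $\{|\mu-b_\alpha|\le 1\}$ for $\alpha\in[\delta_\alpha,\pi/2-\delta_\alpha]$; this is where the restriction on $\alpha$ enters, and one then fixes $a=a(\delta_\alpha)$ large enough that $x,y>0$ throughout this slab and that $R_\alpha$ together with its bounded-order derivatives has size $\le\tfrac12 c(\delta_\alpha)$ there.

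With these ingredients the implicit function theorem --- equivalently a contraction mapping argument in the slab $\{|\mu-b_\alpha|\le 1\}$ --- produces a unique solution $\mu=\mu_\alpha(s,z)$, smooth jointly in $(s,z,\alpha)$, with $|\mu_\alpha(s,z)-b_\alpha|\le Ce^{-(a+s)}$. Setting $f_\alpha(s,z)=\mu_\alpha(s,z)-b_\alpha$ exhibits the first wing as $F^1_\alpha(s,z)=A^1_\alpha(s,z)+f_\alpha(s,z)\nu_\alpha$ and gives the smooth dependence of $f_\alpha$ and $F^1_\alpha$ on $\alpha$; the same argument shows that in a given quadrant no other sheet of $\Sigma(\alpha)$ reaches into $\{x^2+y^2>R^2\}$ for $R$ large, so that outside a fixed cylinder $\Sigma(\alpha)$ is exactly the union of the four wings. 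The remaining and principal task is the weighted bound $\|e^s\,d^if_\alpha/d\alpha^i\|_{C^k(\overline{H^+})}\le C_{k,i}e^{-a}$. For this I would differentiate the relation $g_\alpha(\mu_\alpha)=R_\alpha(s,z,\mu_\alpha)$ repeatedly in $s$, $z$, $\alpha$, solving at each stage for the top-order derivative of $\mu_\alpha$ by dividing by $g_\alpha'(\mu_\alpha)-\partial_\mu R_\alpha$, which is bounded below by $\tfrac12 c(\delta_\alpha)$: every $\partial_s$ reproduces the factor $e^{-(a+s)}$, every $\partial_z$ only hits $\cos z$, and every $\partial_\alpha$ produces bounded polynomial factors in $\tan\alpha$, $\cot\alpha$, $\mu_\alpha$, so an induction --- Fa\`a di Bruno bookkeeping on the composition $g_\alpha\circ\mu_\alpha$ --- bounds every mixed derivative of $f_\alpha$ of order $\le k$ in $(s,z)$ and $\le i$ in $\alpha$ by $C_{k,i}e^{-(a+s)}$, uniformly in $\alpha$. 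The main obstacle is exactly this uniformity: keeping the ellipticity constant $|g_\alpha'|$ and the amplitudes of $R_\alpha$ and of all of its derivatives under simultaneous control as $\alpha$ ranges over the compact interval $[\delta_\alpha,\pi/2-\delta_\alpha]$ --- not any single estimate.
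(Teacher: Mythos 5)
The paper does not include a proof of Lemma \ref{lem:scherk-properties}; the statement is treated as a known fact about Scherk's singly periodic surfaces (of the kind that appears in the literature by Kapouleas, Traizet, and Hauswirth--Pacard). So there is nothing in the paper to compare against, and your proof has to be assessed on its own.

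Your proof is correct and self-contained, and the three key computations that carry it all check out. The pointwise identity $|\nabla G_\alpha|^2=\cos^2\alpha\sin^2\alpha\,(\cosh(x/\cos\alpha)+\cosh(y/\sin\alpha))^2$ on $\{G_\alpha=0\}$ (so $|\nabla G_\alpha|\ge\sin2\alpha$) is exactly right and gives embeddedness, completeness, and smooth dependence on $\alpha$ at once; the level-set algebra for $H=0$ does reduce to the stated identity. On the wing, the change to coordinates $(s,z,\mu)$ with $\mu=b_\alpha+t$ and the split $g_\alpha(\mu)=R_\alpha(s,z,\mu)$ is well organized, and the cancellation $g_\alpha(b_\alpha)=0$ is equivalent to $2\log|\cot\alpha|=\mu(\tan\alpha+\cot\alpha)$, which, since $\tan\alpha+\cot\alpha=2/\sin2\alpha$, gives $\mu=\sin(2\alpha)\log|\cot\alpha|=b_\alpha$ exactly as claimed. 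Since $g_\alpha'(\mu)=-\sin\alpha\cos\alpha\,(e^{-\mu\tan\alpha}+e^{\mu\cot\alpha})$ is strictly negative and uniformly bounded away from zero on the slab for $\alpha\in[\delta_\alpha,\pi/2-\delta_\alpha]$, the implicit function theorem applies uniformly, and your inductive differentiation scheme does deliver $|\partial^k_{(s,z)}\partial^i_\alpha f_\alpha|\le C_{k,i}\,e^{-(a+s)}$, i.e.\ $\|e^s\,d^i f_\alpha/d\alpha^i\|_{C^k(\overline{H^+})}\le C_{k,i}e^{-a}$. One minor point is left compressed: the claim that outside a fixed cylinder $\Sigma(\alpha)$ consists exactly of the four wings (so that $f_\alpha$ indeed parametrizes the whole first wing) deserves a line or two more --- essentially that for $a$ large the slab $\{|\mu-b_\alpha|\le1\}$ stays in the open first quadrant while the strict monotonicity of $g_\alpha$ and the unboundedness of $g_\alpha$ as $\mu\to\pm\infty$ rule out any other intersection of $\Sigma(\alpha)$ with the line $\{(s,z)\ \text{fixed},\ t\in\R\}$ in that quadrant --- but the idea is there and the rest of the argument is sound.
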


The function $f_\alpha(s,z)$ satisfies the minimal surface equation
\begin{align}
\label{min surf eq}
\partial_s \left( \frac{{\partial_s}f}{\sqrt{1+(\partial_s f)^2+(\partial_z f)^2}} \right)+
\partial_z \left( \frac{{\partial_z}f}{\sqrt{1+(\partial_s f)^2+(\partial_z f)^2}} \right)=0,
\end{align}
for $s>0$, $z\in\R$.

	\begin{definition}
\label{def:wings} Let us denote by $\mathcal R_{yz}$ the reflection across the $yz$-plane and $\mathcal R_{xz}$ the reflection across the $xz$-plane.  The parametrizations of the second, third, and fourth wings are given by
\begin{gather*}
F^2_{\alpha} = \mathcal R_{yz} \circ F^1_{\alpha},\quad
F^3_{\alpha} = \mathcal R_{xz} \circ \mathcal R_{yz} \circ F^1_{\alpha}, \quad
F^4_{\alpha} = \mathcal R_{xz} \circ F^1_{\alpha}.
\end{gather*}
The {\bf $i$th wing} of $\Sigma(\alpha)$ is  given by $F^i_{\alpha}(H^+)$ and is denoted by $W^i(\alpha)$. The parametrizations of the corresponding asymptotic half-planes are obtained by replacing $F^1_{\alpha}$ by $A^1_{\alpha}$ in the above formulas. We use $A^i_\alpha$ to denote the parametrization of the $i$th asymptotic half-plane as well as its image, $A^i_{\alpha}(H^+)$.
	The {\bf  inner core} of $\Sigma(\alpha)$ is the surface without its four wings.
	\end{definition}
	
Each half-plane $A^i_{\alpha}(H^+)$ starts close to the boundary of the corresponding wing $W^i$ and intersects neither the $xz$-plane nor the $yz$-plane. Each wing and each asymptotic half-plane inherit the coordinates $(s,z)$ from their descriptions in Lemma \ref{lem:scherk-properties} and Definition \ref{def:wings}. %We will use these coordinates extensively in the rest of the article.

\begin{figure}

\hbox{
\p 0,0,{\includegraphics{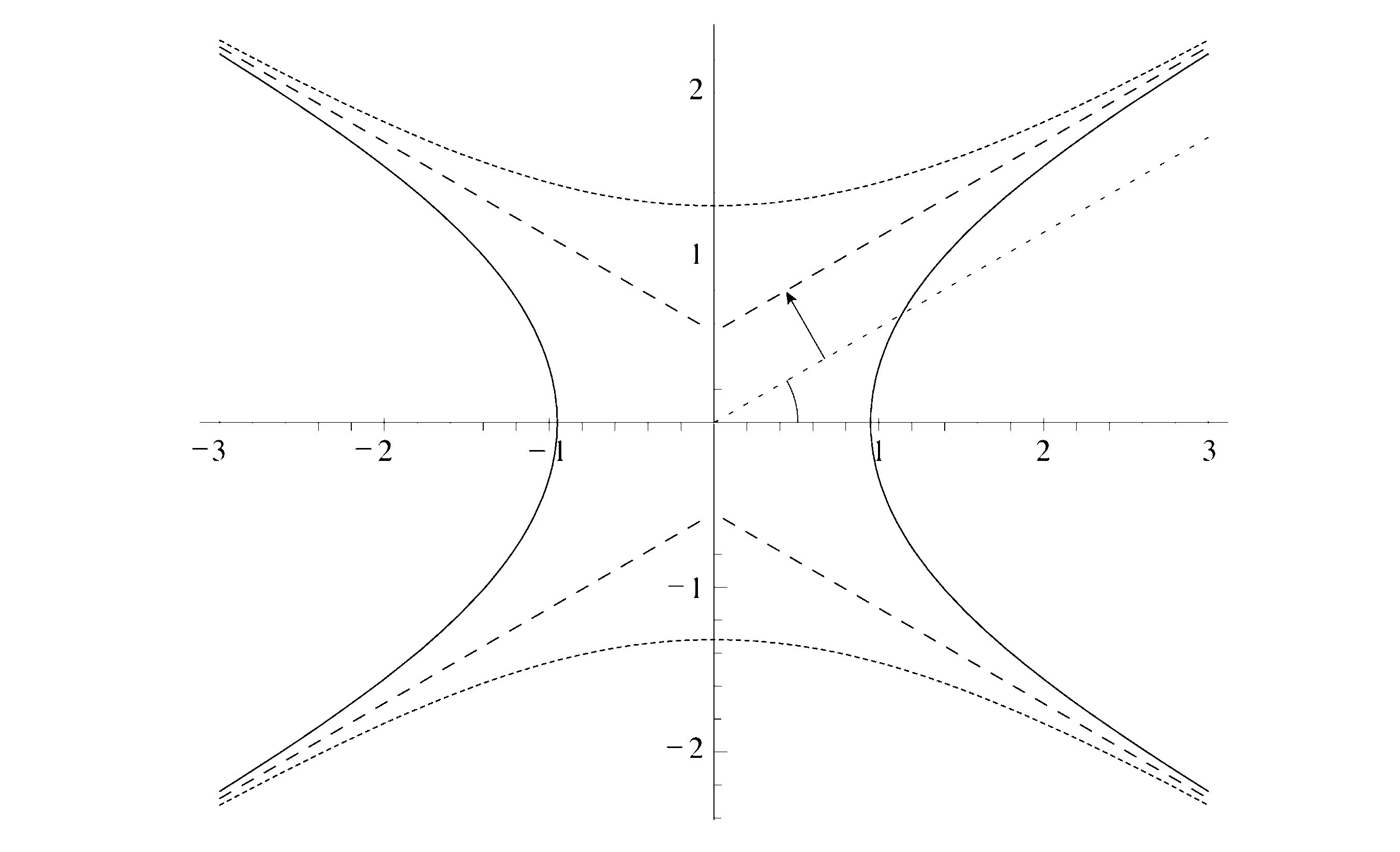}}
\p 295,92,{$x$}
\p 172,182,{$y$}
\p 290,180,{$W^1(\alpha)$}
\p 15,180,{$W^2(\alpha)$}
\p 15,10,{$W^3(\alpha)$}
\p 290,10,{$W^4(\alpha)$}
\p 190,105,{$\alpha$}
\p 195,125,{$b_\alpha$}
\p 140, 110,{$\frac{b_{\alpha}}{\cos \alpha}$}
\p 230,160,{$A_\alpha^1$}
\p 90,160,{$A_\alpha^2$}
\p 90,35,{$A_\alpha^3$}
\p 230,35,{$A_\alpha^4$}
\p 100,-20,{-----}
\p 120,-20,{intersection with $z=0$}
\p 100,-30,{$\cdot\!\!\cdot\!\!\cdot\!\!\cdot\!\!\cdot\!\!\cdot\!\!\cdot\!\!\cdot\!\!\cdot$}
\p 120,-30,{intersection with $z=\pi$}
\p 100,-40,{- - -}
\p 120,-40,{asymptotic planes $A_\alpha^i$}
}

\caption{Sections of the Scherk surface $\Sigma(\alpha)$.}
\label{fig:scherk}
\end{figure}

%\begin{figure}

%\includegraphics[width=5cm]{fig1.png}
%\caption{Scherk's singly periodic minimal surface}
%\end{figure}

\subsection{Dislocation of the Scherk surfaces}

We now perform dislocations on the first and fourth wings of $\Sigma(\alpha)$. These perturbations will help us to deal with the kernel of the linear operator $\lcal_{\ve}$ associated to normal perturbations of solutions to \eqref{eq2}. Indeed, because translated solutions of \eqref{eq2} remain solutions, the functions $e_x \cdot \nu$, $e_y \cdot \nu$, and $e_z \cdot \nu$ are in the kernel of $\lcal_{\ve}$. Here we have taken the normal component of the translations because we are considering normal perturbations. The last function, $e_z \cdot \nu$ does not satisfy our imposed symmetries. Therefore, we can discard it from the kernel. The other two remain. In Section \ref{sec:jacobi-on-scherk},
%\comment{edit: cross-referencing here}
we will show that the Dirichlet problem for the linear operator can be solved on a truncated piece of $\Sigma(\alpha)$, up to constants at the boundary. By adding a linear combination of the functions in the kernel, we can obtain a solution that vanishes on the boundary of two adjacent wings, say the second and third wings. To obtain a solution that vanishes on all the connected pieces of the boundary, we will artificially translate the first and fourth wing by constants $\tau_1$ and $\tau_4$.

The linear operator $\lcal_{\ve}$ is close to linear operator $L:= \Delta+|A|^2$ associated to the equation $H=0$, so we have small eigenvalues due to changes of Scherk angle and rotation. Indeed, because there is a one parameter family of Scherk surfaces, we expect a function in the kernel of the Jacobi operator $L$, namely, the normal component of the motion associated to changing the angle $\alpha$. One more dimension is generated by rotation of the Scherk surfaces around the $z$-axis. To summarize, besides the translations, we have two more dimensions in the kernel of $L$ generated by linear functions along the wings. This is reasonable since the $L$ is close to the Laplace operator along the wings. By adding a linear combination of these two linear eigenfunctions, we can force exponential decay along the second and third wings again. As before, we will generate linear functions on the first and fourth wings through rotations by angles $\beta_1$ and $\beta_4$ respectively.

\begin{definition} For $\beta \in \R$, we define the map $Z_\beta:\R^3 \to \R^3$ to be the rotation of angle $\beta$ (counterclockwise in the $xy$-plane) around the $z$-axis:
$$
Z_{\beta}(x,y,z) = ( \cos(\beta) x -\sin(\beta) y , \sin(\beta)x+\cos(\beta)y ,z).
$$
%	 a line parallel to the $z$-axis through the point $(0,y_0,0)$:
%	$$
%	Z_{\beta,y_0}(x,y,z) = ( \cos(\beta) x -\sin(\beta) (y-y_0) , \sin(\beta)x+\cos(\beta)(y-y_0) + y_0,z).
%	$$
%We will write
%	$$
%	Z_\beta = Z_{\beta,0}.
%	$$
\end{definition}

In what follows, we will confine $\beta$ to $(-\delta_p, \delta_p)$, where $\delta_p>0$ is a small fixed number.
	
We consider two constants $R_{rot}>10$, $R_{tr}>R_{rot}+10$, and a family of smooth transition functions $\eta_{b}: \R \to \R$ such that $0\leq \eta_{b}(s)\leq 1$, $\eta_{b}(s) =0$ for  $s<b$, and $\eta_{b}(s) =1$ for $s>b+1$. The numbers $R_{rot}$, $R_{tr}$ will be fixed later to be large.

%\comment{edit: We might want to adjust the domain of $\beta$ and $R_{rot}$. For example, the $\beta$'s can't be bigger than $\delta_{\alpha}$.}	

%\comment{$\alpha$ is not small}
Given $\alpha \in  [\delta_{\alpha}, \pi/2 - \delta_\alpha]$, $\beta_1, \beta_4, \tau_1, \tau_4 \in (-\delta_p,\delta_p)$,
we modify the first and fourth wings in the following way: the $i$th wing is shifted by $\tau_i$ at around $s=R_{tr}$, then it is rotated by an angle $\beta_i$ at distance $s=R_{rot}$. The parametrization of the new $i$th wing, for $i=1,4$, is given by $F^i[\alpha, \beta_i, \tau_i]: H^+ \to \R^3$, where
	\begin{align*}
	F^1[\alpha, \beta_1, \tau_1] (s, z) &= (1-\eta_{R_{rot}}(s)) F^1_{\alpha}(s,z)+ \eta_{R_{rot}}(s)Z_{\beta_1}(F^1_{\alpha}(s,z) +  \tau_1 \eta_{R_{tr}} (s)\nu_{\alpha})\\
	F^4[\alpha, \beta_4, \tau_4] (s, z) &= \mathcal R_{xz} \circ F^1[\alpha, \beta_4, \tau_4] (s, z),
	\end{align*}
and $\mathcal R_{xz}$ is the reflection across the $xz$-plane.
%The parameter $y_\alpha$ is given by
%\begin{align}
%\label{def y alpha}
%y_\alpha = \frac{b_\alpha}{\cos\alpha}
%\end{align}
%and is the $y$-coordinate of the intersection of the extensions of $A_\alpha^1$ and $A_\alpha^2$  to planes.
%The cut-off functions $\eta_{R_{rot}}$ and $\eta_{R_{tr}}$ are just there to ensure that the resulting surface is smooth.
Note that the $i$th wing is moved away from the $x$-axis for positive constants $\beta_i$ and $\tau_i$. We denote the new wings by $W^i[\alpha, \beta_i, \tau_i] :=F^i[\alpha, \beta_i, \tau_i](H^+)$, $i=1,4$ (see Figure \ref{fig dislocations}).

The wings have natural coordinates $(s,z)$ given by the parametrizations $F^1$ and $F^4$. The surface $\Sigma'[\alpha, \beta_1, \beta_4, \tau_1, \tau_4]$ (or $\Sigma'$ for short) is defined to be the union of the inner core of $\Sigma(\alpha)$ with the four wings $W^2(\alpha)$, $W^3(\alpha)$, $W^1[\alpha, \beta_1, \tau_1]$, and $W^4[\alpha, \beta_4, \tau_4]$. We will call the region of $\Sigma'$ for which $s \in [0, R_{tr}+10]$ the {\bf outer core}.

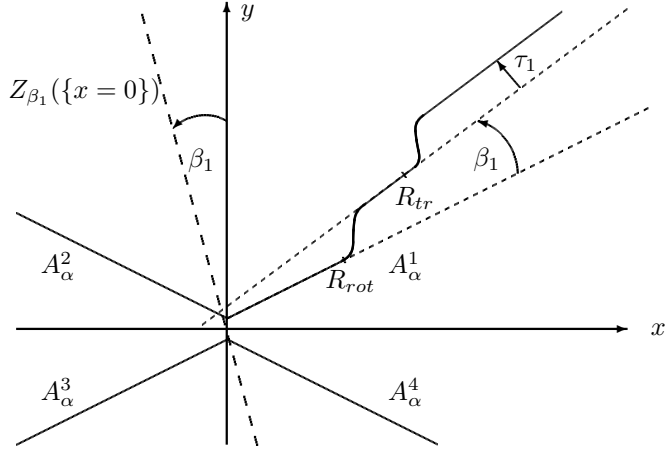
\begin{figure}

\def\JPicScale{0.7}

\ifx\JPicScale\undefined\def\JPicScale{1}\fi
\unitlength \JPicScale mm
\begin{picture}(132,80)(0,0)
\linethickness{0.3mm}
\multiput(50,20)(0.24,0.12){92}{\line(1,0){0.24}}
\linethickness{0.3mm}
\multiput(10,40)(0.24,-0.12){167}{\line(1,0){0.24}}
\linethickness{0.3mm}
\multiput(10,-4)(0.24,0.12){167}{\line(1,0){0.24}}
\linethickness{0.3mm}
\multiput(50,16)(0.24,-0.12){167}{\line(1,0){0.24}}
\put(106.79,68.39){\makebox(0,0)[cc]{$\tau_1$}}

\linethickness{0.3mm}
\multiput(101.47,68.88)(0.12,-0.14){36}{\line(0,-1){0.14}}
\put(101.47,68.88){\vector(-3,4){0.12}}
\put(84,30){\makebox(0,0)[cc]{$A_\alpha^1$}}

\put(78.75,30.62){\makebox(0,0)[cc]{}}

\put(84,6){\makebox(0,0)[cc]{$A_\alpha^4$}}

\put(18,6){\makebox(0,0)[cc]{$A_\alpha^3$}}

\put(18,30){\makebox(0,0)[cc]{$A_\alpha^2$}}

\linethickness{0.3mm}
\put(10,18){\line(1,0){116}}
\put(126,18){\vector(1,0){0.12}}
\linethickness{0.3mm}
\put(50,-4){\line(0,1){84}}
\put(50,80){\vector(0,1){0.12}}
\put(132,18){\makebox(0,0)[cc]{$x$}}

\put(104,17){\makebox(0,0)[cc]{}}

\put(54,78){\makebox(0,0)[cc]{$y$}}

\put(47,70){\makebox(0,0)[cc]{}}

\linethickness{0.3mm}
\multiput(105.26,48)(0.01,-0.5){1}{\line(0,-1){0.5}}
\multiput(105.23,48.5)(0.03,-0.5){1}{\line(0,-1){0.5}}
\multiput(105.17,49)(0.06,-0.5){1}{\line(0,-1){0.5}}
\multiput(105.1,49.49)(0.08,-0.49){1}{\line(0,-1){0.49}}
\multiput(104.99,49.98)(0.1,-0.49){1}{\line(0,-1){0.49}}
\multiput(104.86,50.47)(0.13,-0.48){1}{\line(0,-1){0.48}}
\multiput(104.71,50.94)(0.15,-0.48){1}{\line(0,-1){0.48}}
\multiput(104.54,51.41)(0.17,-0.47){1}{\line(0,-1){0.47}}
\multiput(104.34,51.87)(0.1,-0.23){2}{\line(0,-1){0.23}}
\multiput(104.12,52.32)(0.11,-0.22){2}{\line(0,-1){0.22}}
\multiput(103.88,52.76)(0.12,-0.22){2}{\line(0,-1){0.22}}
\multiput(103.62,53.19)(0.13,-0.21){2}{\line(0,-1){0.21}}
\multiput(103.34,53.6)(0.14,-0.21){2}{\line(0,-1){0.21}}
\multiput(103.03,54)(0.1,-0.13){3}{\line(0,-1){0.13}}
\multiput(102.71,54.38)(0.11,-0.13){3}{\line(0,-1){0.13}}
\multiput(102.37,54.75)(0.11,-0.12){3}{\line(0,-1){0.12}}
\multiput(102.01,55.1)(0.12,-0.12){3}{\line(1,0){0.12}}
\multiput(101.64,55.43)(0.12,-0.11){3}{\line(1,0){0.12}}
\multiput(101.25,55.75)(0.13,-0.1){3}{\line(1,0){0.13}}
\multiput(100.84,56.04)(0.2,-0.15){2}{\line(1,0){0.2}}
\multiput(100.43,56.31)(0.21,-0.14){2}{\line(1,0){0.21}}
\multiput(99.99,56.57)(0.22,-0.13){2}{\line(1,0){0.22}}
\multiput(99.55,56.8)(0.22,-0.12){2}{\line(1,0){0.22}}
\multiput(99.1,57.01)(0.23,-0.1){2}{\line(1,0){0.23}}
\multiput(98.63,57.19)(0.23,-0.09){2}{\line(1,0){0.23}}
\multiput(98.16,57.36)(0.47,-0.16){1}{\line(1,0){0.47}}
\multiput(97.68,57.5)(0.48,-0.14){1}{\line(1,0){0.48}}
\put(97.68,57.5){\vector(-4,1){0.12}}

\put(99.42,50.22){\makebox(0,0)[cc]{$\beta_1$}}

\put(81,37){\makebox(0,0)[cc]{}}

\linethickness{0.3mm}
\multiput(49.46,57.74)(0.22,-0.12){2}{\line(1,0){0.22}}
\multiput(49.01,57.95)(0.23,-0.11){2}{\line(1,0){0.23}}
\multiput(48.55,58.13)(0.23,-0.09){2}{\line(1,0){0.23}}
\multiput(48.08,58.29)(0.47,-0.16){1}{\line(1,0){0.47}}
\multiput(47.6,58.42)(0.48,-0.13){1}{\line(1,0){0.48}}
\multiput(47.11,58.51)(0.49,-0.1){1}{\line(1,0){0.49}}
\multiput(46.62,58.58)(0.49,-0.07){1}{\line(1,0){0.49}}
\multiput(46.12,58.62)(0.5,-0.04){1}{\line(1,0){0.5}}
\multiput(45.62,58.63)(0.5,-0.01){1}{\line(1,0){0.5}}
\multiput(45.12,58.62)(0.5,0.02){1}{\line(1,0){0.5}}
\multiput(44.63,58.57)(0.5,0.05){1}{\line(1,0){0.5}}
\multiput(44.14,58.49)(0.49,0.08){1}{\line(1,0){0.49}}
\multiput(43.65,58.38)(0.49,0.11){1}{\line(1,0){0.49}}
\multiput(43.17,58.24)(0.48,0.14){1}{\line(1,0){0.48}}
\multiput(42.7,58.08)(0.47,0.16){1}{\line(1,0){0.47}}
\multiput(42.24,57.89)(0.23,0.1){2}{\line(1,0){0.23}}
\multiput(41.8,57.67)(0.22,0.11){2}{\line(1,0){0.22}}
\multiput(41.36,57.42)(0.22,0.12){2}{\line(1,0){0.22}}
\multiput(40.95,57.15)(0.21,0.14){2}{\line(1,0){0.21}}
\multiput(40.55,56.86)(0.2,0.15){2}{\line(1,0){0.2}}
\multiput(40.16,56.54)(0.13,0.11){3}{\line(1,0){0.13}}
\multiput(39.8,56.2)(0.12,0.11){3}{\line(1,0){0.12}}
\put(39.8,56.2){\vector(-1,-1){0.12}}

\put(45.2,50.1){\makebox(0,0)[cc]{$\beta_1$}}

\linethickness{0.3mm}
\multiput(50,20)(1.8,0.9){45}{\multiput(0,0)(0.22,0.11){4}{\line(1,0){0.22}}}
\linethickness{0.3mm}
\qbezier(72,31)(73.04,31.51)(73.53,32.35)
\qbezier(73.53,32.35)(74.01,33.18)(74.01,34.46)
\qbezier(74.01,34.46)(74.01,35.74)(74.07,36.76)
\qbezier(74.07,36.76)(74.14,37.79)(74.26,38.73)
\qbezier(74.26,38.73)(74.39,39.67)(74.83,40.36)
\qbezier(74.83,40.36)(75.28,41.05)(76.12,41.61)
\linethickness{0.3mm}
\multiput(72.1,31.7)(0.12,-0.3){4}{\line(0,-1){0.3}}
\put(68.6,49.8){\makebox(0,0)[cc]{}}

\put(73.5,27.2){\makebox(0,0)[cc]{$R_{rot}$}}

\linethickness{0.3mm}
\multiput(75.95,41.5)(0.16,0.12){59}{\line(1,0){0.16}}
\linethickness{0.3mm}
\multiput(86.99,58.35)(0.16,0.12){167}{\line(1,0){0.16}}
\linethickness{0.3mm}
\qbezier(85.76,48.78)(86.33,49.29)(86.49,50.04)
\qbezier(86.49,50.04)(86.65,50.79)(86.41,51.88)
\qbezier(86.41,51.88)(86.17,52.97)(86.04,53.88)
\qbezier(86.04,53.88)(85.9,54.78)(85.84,55.65)
\qbezier(85.84,55.65)(85.77,56.52)(86.11,57.22)
\qbezier(86.11,57.22)(86.46,57.92)(87.28,58.57)
\linethickness{0.3mm}
\multiput(83.35,47.84)(0.11,-0.16){7}{\line(0,-1){0.16}}
\put(85.71,42.95){\makebox(0,0)[cc]{$R_{tr}$}}

\linethickness{0.3mm}
\multiput(45.29,18.79)(1.6,1.19){51}{\multiput(0,0)(0.16,0.12){5}{\line(1,0){0.16}}}
\linethickness{0.3mm}
\multiput(33.95,77.89)(1.02,-3.82){22}{\multiput(0,0)(0.13,-0.48){4}{\line(0,-1){0.48}}}
\put(23,63){\makebox(0,0)[cc]{$Z_{\beta_1}(\{x=0\})$}}

\put(30.2,64.3){\makebox(0,0)[cc]{}}

\end{picture}

\caption{Dislocations in wing 1}
\label{fig dislocations}
\end{figure}

	\begin{remark}
	\label{rem:pullback}
	The maps $F^i_{\alpha} \circ( F^i[\alpha, \beta_i, \tau_i])^{-1}$ and $F^i[\alpha, \beta_i, \tau_i] \circ (F^i_{\alpha})^{-1}$, $i=1,4$ can be used to pullback tensors defined on $W^i(\alpha)$ to $W^i[\alpha, \beta_i, \tau_i]$ and vice versa: in the case of a function $f$ defined on $W^i[\alpha, \beta_1, \tau_1]$, the composition $f  \circ F^i[\alpha, \beta_i, \tau_i] \circ (F^i_{\alpha})^{-1}$ is the corresponding pullback function on $W^i({\alpha})$. Taking each wing at a time, these maps transport functions and tensors between $\Sigma'$ and $\Sigma(\alpha)$. This is very useful as it lets us work on a fixed surface, usually $\Sigma(\alpha)$. We will use the same notation for functions and tensors on $\Sigma'$ or their pullback to $\Sigma(\alpha)$.
	%To avoid confusion with geometric quantities, we will track them with a subscript: for example, $H_{\Sigma'}$ could denote the mean curvature of $\Sigma'$ or its pullback to $\Sigma(\alpha)$, while $H_{\Sigma(\alpha)}$ is the mean curvature of $\Sigma(\alpha)$ or its pullback to $\Sigma'$.
For example, $H_{\Sigma'}$ could denote the mean curvature of $\Sigma'$ as a function on $\Sigma'$ or its pullback to $\Sigma(\alpha)$.
The same notation convention applies to the unit normal vector $\nu$, the metric $g$, and the second fundamental form $A$.
	\end{remark}

Let us define the following functions on $\Sigma(\alpha)$, which capture the contribution of the dislocations to the mean curvature:
\begin{align}
\label{def w}
	w_1 := \frac{d}{d\tau_1}\Big|_{\beta_i=\tau_i=0} H_{\Sigma'}, \quad w_2 := \frac{d}{d\tau_4}\Big|_{\beta_i=\tau_i=0}  H_{\Sigma'}, \\
\label{def w prime}
	w'_1 := \frac{d}{d\beta_1} \Big|_{\beta_i=\tau_i=0} H_{\Sigma'}, \quad w'_2:=\frac{d}{d\beta_4} \Big|_{\beta_i=\tau_i=0} H_{\Sigma'}.
\end{align}
These functions are compactly supported because rotations and translations do not change the mean curvature. They will later help us solve the Dirichlet problem associated to the Jacobi operator $\Delta + |A|^2$ on the Scherk surfaces in Section \ref{sec:jacobi-on-scherk}.

Because the parameters $\beta_i$ are associated to rotations, the functions $w'_1$ and $w'_2$ can be written explicitly as the Jacobi operator on the normal component of rotation at a point $(x,y,z) \in \Sigma(\alpha)$:
\begin{align}
\label{formula w prime}
\left\{
\begin{aligned}
w'_1 (x,y,z) &= (\Delta_{\Sigma(\alpha)} + |A_{\Sigma(\alpha)}|^2) (\eta_{rot,1} \nu_{\Sigma(\alpha)} \cdot (-y,x,0)),  \\
w'_2 (x,y,z) &= (\Delta_{\Sigma(\alpha)} + |A_{\Sigma(\alpha)}|^2) (\eta_{rot,4} \nu_{\Sigma(\alpha)} \cdot (-y,x,0)),
\end{aligned}
\right.
\end{align}
%for $(x,y,z) \in \Sigma(\alpha)$,
%\comment{changed signs}
where $\eta_{rot,1}(s)$ is defined as $\eta_{R_{rot}}(s)$ on wing 1 and zero elsewhere and similarly for $\eta_{rot,4}$.
We also have,
\begin{align}
\label{formula w}
\left\{
\begin{aligned}
w_1  &= (\Delta_{\Sigma(\alpha)} + |A_{\Sigma(\alpha)}|^2) (\eta_{tr,1} ),\\
w_2  &= (\Delta_{\Sigma(\alpha)} + |A_{\Sigma(\alpha)}|^2) (\eta_{tr,4}),
\end{aligned}
\right.
\end{align}
on $\Sigma(\alpha)$,
%\comment{changed signs}
where $\eta_{tr,1}(s)=\eta_{R_{tr}}$ on wing 1 and zero elsewhere, and similarly for  $\eta_{tr,4}$.

\subsection{Wrapping the dislocated Scherk surfaces around a circle} We first rotate our new surface $\Sigma'$ so that its second and third wings match the directions of two chosen pieces of catenoid or paraboloid coming out of the intersection circle. The wrapping is performed by simply using a smooth map from a tubular neighborhood of the $z$-axis to a neighborhood of a large circle. The scaling factor is $\ve ^{-1}$ so our target circle will have a radius of order $\ve^{-1}$.
%Recall that  $a$ was chosen in Lemma \ref{lem:scherk-properties}.

\begin{definition}
For $\ve>0$  and $\varrho>0$, we define
\[
B_{\ve,\varrho} (x,y,z) = (\ve^{-1} \varrho +x) (\cos (\ve \varrho^{-1} z), \sin(\ve \varrho^{-1} z),0)  + (0,0,  y).	
\]
\end{definition}
This map takes a segment of length $2\pi \ve ^{-1} \varrho$ on the $z$-axis to the circle of radius $\ve^{-1} \varrho$.

We can not wrap the whole surface $\Sigma'$, so we cut its four wings at $s=R_{tr}+10$ and denote the new surface by $\bar \Sigma'$, with a ``bar" on top to indicate that it has a boundary. Our desingularizing surface is a dislocated rotated wrapped Scherk surface
	\begin{equation}
	\label{eq:bar-sigma}
	\bar \Sigma := B_{\ve, \rho_{\ve}} \circ Z_\beta (\bar \Sigma'),
	\end{equation}
where the angle $\beta$ has yet to be fixed and $\rho_{\ve}$ is  the closest number in $\ve \Z$ to $\rho$ (the radius associated to the original intersection $C_{\rho}$)

%\comment{are there too many $\rho$'s? here is the radius in the definition above and the radius of the original intersection. could it lead to confusion?}

%We have dropped the dependence on the various parameters in the notation. We list them now and recall their meaning:
%	\begin{itemize}
%	\item $\ve$ controls the overall scale and the error  in the construction.
%	\item $\rho_{\ve}$ is a number close (on the order of $\ve$) to the radius of the circle of intersection.
%	\item $\alpha$ is the opening angle of the original Scherk surface.
%	\item $\beta_1$ and $\beta_4$ are the angles of rotation of the first and fourth wings respectively.
%	\item $\tau_1$ and $\tau_4$ are the amount by which the first and fourth wings are translated (along the normal of the asymptotic plane to the respective wing).
%	\end{itemize}
We wish to prolong the wings of the desingularizing surface $\bar \Sigma$ with pieces of self-translating catenoids or paraboloids. At this point, it will be useful to record the boundary, not of the surface $\bar \Sigma$ itself, but of the asymptotic plane underneath at $s=R_{tr}+10$. We will extend the asymptotic pieces first, then construct the approximate surface by adding the graph of $f_{\alpha}$.

\subsection{Fitting the Scherk surface}
\label{section:Fitting the Scherk surface}
%\comment{change the title of this subsection}
It is now time to examine the initial configuration in detail. We will work with cross-sections in the $xz$-plane. Let $C_{\rho}$ denote the intersection of the paraboloid and catenoid and let $\alpha_0 \in [\delta_{\alpha}, \pi/2-\delta_{\alpha}]$ be half of the angle of intersection between the top $\mathcal W_{R}$ and the inner part of $\mathcal P$ (see Figure \ref{fig3}).

\begin{figure}

\ifx\JPicScale\undefined\def\JPicScale{1}\fi
\unitlength \JPicScale mm
\begin{picture}(135,60)(40,30)
\linethickness{0.3mm}
\put(60,40){\line(1,0){73}}
\put(133,40){\vector(1,0){0.12}}
\linethickness{0.3mm}
\put(70,30){\line(0,1){60}}
\put(70,90){\vector(0,1){0.12}}
\put(116.88,68.75){\makebox(0,0)[cc]{$\mathcal P$}}

\put(135,60){\makebox(0,0)[cc]{}}

\put(106.88,82.88){\makebox(0,0)[cc]{$\mathcal W_R$}}

\put(85,51.88){\makebox(0,0)[cc]{$C_{\rho}$}}

\linethickness{0.3mm}
\qbezier(110,81)(101.14,73.7)(96.46,69.64)
\qbezier(96.46,69.64)(91.78,65.58)(90.56,64.12)
\qbezier(90.56,64.12)(89.29,62.64)(88.57,60.95)
\qbezier(88.57,60.95)(87.85,59.27)(87.56,57.12)
\qbezier(87.56,57.12)(87.26,54.98)(87.85,53.15)
\qbezier(87.85,53.15)(88.43,51.31)(90,49.5)
\qbezier(90,49.5)(91.54,47.68)(94.54,46.71)
\qbezier(94.54,46.71)(97.55,45.75)(102.5,45.5)
\qbezier(102.5,45.5)(107.44,45.23)(111.29,45.79)
\qbezier(111.29,45.79)(115.14,46.34)(118.5,47.81)
\qbezier(118.5,47.81)(121.87,49.27)(124.79,50.85)
\qbezier(124.79,50.85)(127.71,52.43)(130.62,54.38)
\linethickness{0.3mm}
\qbezier(70,48.12)(75.53,48.12)(79.36,48.42)
\qbezier(79.36,48.42)(83.2,48.72)(85.94,49.38)
\qbezier(85.94,49.38)(88.69,50.02)(91.47,51.14)
\qbezier(91.47,51.14)(94.25,52.27)(97.5,54.06)
\qbezier(97.5,54.06)(100.74,55.84)(104.12,57.95)
\qbezier(104.12,57.95)(107.5,60.05)(111.56,62.81)
\qbezier(111.56,62.81)(115.64,65.58)(117.52,66.86)
\qbezier(117.52,66.86)(119.4,68.14)(119.38,68.12)
\end{picture}
\caption{Paraboloid and catenoid with transversal intersection at $C_\rho$.}
\label{fig3}
\end{figure}
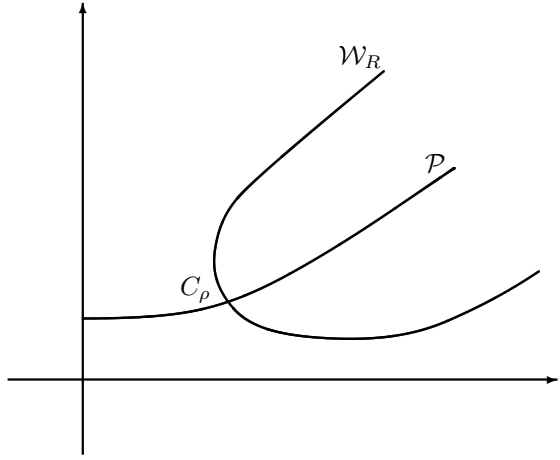

On the bounded part of the paraboloid, we take two points $P$ and $P'$ at distances $(a+b_{\alpha_0} \cot \alpha_0 + R_{tr}+20)\ve$ and $(a+b_{\alpha_0} \cot \alpha_0+R_{tr}+10)\ve$ from $C_{\rho}$ respectively and consider the half-line starting at $P'$ tangent to the paraboloid and pointing to $C_{\rho}$.
Recall that  $a$ was chosen in Lemma \ref{lem:scherk-properties} and that the term $a+ b_{\alpha_0} \cot \alpha_0$ is present because the distance from the intersection of the two (extended) asymptotic planes $A^2_{\alpha}(\R^2)$ and $A^3_{\alpha}(\R^2)$ to the line $A^2_{\alpha}(\{s=0\})$ on is $a+b_{\alpha} \cot \alpha$ by Lemma \ref{lem:scherk-properties}.
The new object $\tilde {\mathcal P}$ is formed by the paraboloid up to $P$, a smooth interpolating curve from $P$ to $P'$, and the tangent half-line after $P'$ (see Figure \ref{fig4}). We will also denote its corresponding surface of revolution by $\tilde {\mathcal P}$. We do a similar construction with the catenoid $\mathcal{W}_R$ and denote the new object $\tilde{\mathcal W}$  (see Figure \ref{fig4}).

\begin{figure}

\ifx\JPicScale\undefined\def\JPicScale{1}\fi
\unitlength \JPicScale mm
\begin{picture}(135,70)(40,30)
\linethickness{0.3mm}
\put(60,40){\line(1,0){73}}
\put(133,40){\vector(1,0){0.12}}
\linethickness{0.3mm}
\put(70,30){\line(0,1){60}}
\put(70,90){\vector(0,1){0.12}}
\put(116.88,68.75){\makebox(0,0)[cc]{$\mathcal P$}}

\put(135,60){\makebox(0,0)[cc]{}}

\put(106.8,82){\makebox(0,0)[cc]{$\mathcal{W}_R$}}

\put(88.6,47.2){\makebox(0,0)[cc]{$C_{\tilde \rho}$}}

\linethickness{0.3mm}
\qbezier(110,81)(101.14,73.7)(96.46,69.64)
\qbezier(96.46,69.64)(91.78,65.58)(90.56,64.12)
\qbezier(90.56,64.12)(89.29,62.64)(88.55,61.04)
\qbezier(88.55,61.04)(87.82,59.45)(87.5,57.5)
\qbezier(87.5,57.5)(87.17,55.56)(87.77,53.53)
\qbezier(87.77,53.53)(88.37,51.5)(90,49.06)
\qbezier(90,49.06)(91.63,46.63)(92.83,44.75)
\qbezier(92.83,44.75)(94.04,42.87)(95,41.25)
\qbezier(95,41.25)(95.98,39.62)(96.81,38.27)
\qbezier(96.81,38.27)(97.63,36.91)(98.44,35.62)
\qbezier(98.44,35.62)(99.25,34.33)(100.08,32.97)
\qbezier(100.08,32.97)(100.9,31.62)(101.88,30)
\linethickness{0.3mm}
\qbezier(70,48.12)(75.53,48.12)(79.36,48.42)
\qbezier(79.36,48.42)(83.19,48.72)(85.94,49.37)
\qbezier(85.94,49.37)(88.69,50.02)(91.69,50.77)
\qbezier(91.69,50.77)(94.7,51.52)(98.44,52.5)
\qbezier(98.44,52.5)(102.17,53.48)(105.1,54.23)
\qbezier(105.1,54.23)(108.03,54.98)(110.62,55.63)
\qbezier(110.62,55.63)(113.21,56.28)(116.21,57.03)
\qbezier(116.21,57.03)(119.22,57.78)(123.12,58.75)
\linethickness{0.1mm}
\qbezier(110,80.8)(101.14,73.5)(96.46,69.44)
\qbezier(96.46,69.44)(91.78,65.38)(90.56,63.92)
\qbezier(90.56,63.92)(89.29,62.43)(88.57,60.75)
\qbezier(88.57,60.75)(87.84,59.06)(87.56,56.92)
\qbezier(87.56,56.92)(87.26,54.78)(87.85,52.95)
\qbezier(87.85,52.95)(88.43,51.11)(90,49.3)
\qbezier(90,49.3)(91.54,47.48)(94.54,46.51)
\qbezier(94.54,46.51)(97.55,45.55)(102.5,45.3)
\qbezier(102.5,45.3)(107.44,45.03)(111.29,45.58)
\qbezier(111.29,45.58)(115.14,46.14)(118.5,47.61)
\qbezier(118.5,47.61)(121.87,49.06)(124.79,50.64)
\qbezier(124.79,50.64)(127.7,52.22)(130.61,54.18)
\linethickness{0.1mm}
\qbezier(70,48.12)(75.53,48.12)(79.36,48.42)
\qbezier(79.36,48.42)(83.2,48.72)(85.94,49.37)
\qbezier(85.94,49.37)(88.7,50.01)(91.48,51.14)
\qbezier(91.48,51.14)(94.26,52.27)(97.51,54.06)
\qbezier(97.51,54.06)(100.75,55.84)(104.13,57.95)
\qbezier(104.13,57.95)(107.51,60.05)(111.57,62.82)
\qbezier(111.57,62.82)(115.65,65.59)(117.53,66.87)
\qbezier(117.53,66.87)(119.4,68.15)(119.38,68.13)
\linethickness{0.3mm}
\put(89.3,50.2){\circle*{0.8}}

\put(80,50.4){\makebox(0,0)[cc]{$P$}}

\put(85.2,51.2){\makebox(0,0)[cc]{$P'$}}

\put(90.8,53.1){\makebox(0,0)[cc]{}}

\put(91.7,49){\makebox(0,0)[cc]{}}

\put(89.3,57.4){\makebox(0,0)[cc]{$Q$}}

\put(90.6,53.2){\makebox(0,0)[cc]{$Q'$}}

\linethickness{0.3mm}
\put(88.1,52.5){\circle*{0.8}}

\linethickness{0.3mm}
\put(85.5,49.3){\circle*{0.8}}

\linethickness{0.3mm}
\put(80.4,48.5){\circle*{0.8}}

\linethickness{0.3mm}
\put(87.4,56.1){\circle*{0.8}}

\end{picture}

\caption{Step in the construction.}
\label{fig4}
\end{figure}
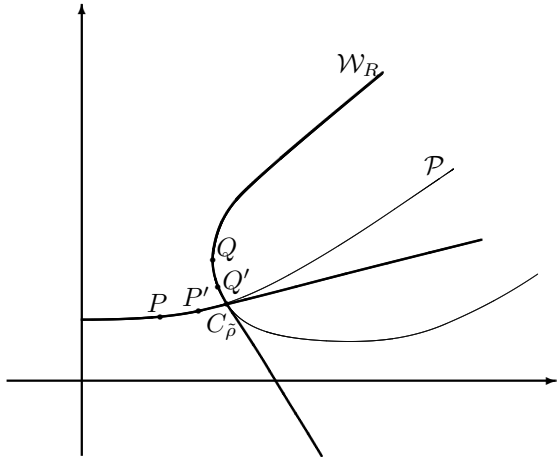

For $\ve>0$ small, the curves $\tilde{\mathcal P}$ an $\tilde{\mathcal W}$ intersect at a point $C_{\tilde\rho}$. %For future reference, we record these distances as $\ve s(P')$ and $\ve s (Q')$ respectively.
%Later, in Theorem (Proposition or Lemma) \ref{blabla} \comment{cross-reference here}, we will show that the mean curvatures of the surfaces of revolution $\tilde {\mathcal P}$ and $\tilde{\mathcal W}$ are bounded and the $C^{\alpha}$-norms are of order $\ve ^{-\alpha}$.
We choose the angle $\alpha$ of the Scherk surface and the angle $\beta$ of the rotation such that lines $A_\alpha^2$, $A_\alpha^3$ are parallel to the segments $C_{\tilde \rho} Q'$ and $C_{\tilde \rho}P'$ respectively. Note that $\alpha = \alpha_0(1+O(\ve))$ and we did not dislocate the second and third wings of the Scherk surface, so $\alpha$ and $\beta$ do not depend on $\beta_1, \beta_2, \tau_1,$ or $ \tau_2$.

Because we have approximated our original curves $\mathcal W_R$ and $\mathcal P$ up to first order, the new intersection point $C_{\tilde \rho}$ is  at distance $O(\ve^2)$ from $C_{\rho}$. By the same reasoning, the distance from $C_{\tilde \rho}$ to either $P'$ or $Q'$ is $(a +b_{\alpha_0} \cot \alpha_0+ R_{tr} + 10) \ve + O(\ve^2)$. Combining with the estimate on $\alpha$, we have
	\begin{equation}
	\label{eq:CP'}
	|C_{\tilde \rho} Q'|, |C_{\tilde \rho} P'| = (a +b_{\alpha} \cot \alpha+ R_{tr} + 10) \ve + O(\ve^2).
	\end{equation}

We have to adjust the scale so that the dislocated bent Scherk surface $\bar \Sigma$ given in \eqref{eq:bar-sigma} fits around the circle of radius $\tilde \rho$ and so that its second and third asymptotic half-planes contain part of the line segments of $\tilde {\mathcal W}$ and $\tilde {\mathcal P}$ respectively. Considering the image of the $z$-axis under $\ve B_{\ve, \rho_{\ve}} \circ Z_\beta$ would be a mistake because in general, the second and third asymptotic planes do not meet there. Instead, we look at the image under $\ve B_{\ve, \rho_{\ve}} \circ Z_\beta$ of the line $(b_{\alpha}/\sin\alpha, 0,z)$ (see Figure \ref{fig:scherk}) and obtain a circle of radius $\rho'=\rho_{\ve} + \ve \frac{b_{\alpha} \cos \beta}{ \sin{\alpha}}=\rho_{\ve} (1+O(\ve))= \rho(1+O(\ve))$. This is the desired radius.
%\comment{I don't think the exact formula for $\rho'$ is helpful.}

We had to wrap our Scherk surface around a circle of radius $\rho_{\ve}$ to get an embedded surface, so now we adjust the scale by defining $\lambda_{\ve} = \frac{\tilde \rho}{\rho'} = 1 + O(\ve^2)$.
% \comment{I do believe that all your $O(\ve)$ can be $O(\ve^2)$.}
This function is not continuous in $\ve$ and the jumps occur when the number of periods of the desingularizing surface increases. We take $\lambda_{\ve} \ve \bar \Sigma$ and shift it vertically so that the asymptotic cone associated with the second wing matches the cone generated by the straight part of $\tilde{\mathcal W}$ on an open set.  The cone associated to the third wing aligns automatically with the cone of $\tilde{\mathcal P}$ by our choice of $\alpha$ and $\beta$. We record the amount of vertical displacement with the constant $d_{\ve}$ and denote the shifted surface by $\lambda_{\ve}\ve \bar \Sigma^{\uparrow}$.

We now work on the large scale of equation \eqref{eq2}. The scaled surface $\lambda_{\ve} \bar \Sigma^{\uparrow}$ has a boundary at  $s=R_{tr}+10$. We wish extend the underlying asymptotic cones with pieces of catenoidal ends that will match the cones in a $C^1$ manner. We take the curve $\gamma_2$ to be just a parametrization of $\mathcal W_R$ and $\gamma_3$ to be the curve generating the inner part of $\mathcal P$.  Thanks to the estimate \eqref{eq:CP'}, the curves $\gamma_2$ and $\gamma_3$, the $C^1$ matching will occur for some $s \in (R_{tr} + 9, R_{tr} +11)$ if $\ve$ is small enough. For $i=1$ and $4$, we consider the circle on the $i$th asymptotic cone corresponding to $s=R_{tr}+10$ and the tangent unit vector to the cone perpendicular to this circle, pointing away from the core. This gives us an initial position and velocity for the unique curve  $\gamma_i =  (\gamma_{i,1}, \gamma_{i,3}):[0, \infty) \to \R^2$, $i=1,4$, generating a rotationally self-translating surface. The curves $\gamma_1$ and $\gamma_4$ are perturbations of sections of the original paraboloidal and lower catenoidal ends. We can assume without loss of generality that the $\gamma_i$'s are parametrized by arclength.

The surface $\mathcal M$, which is an approximate solution to \eqref{eq2} is defined in the following way. In the region $s\geq R_{tr}+20 $ the $i$-th wing  of $\mathcal M$ is taken as a graph over the rotationally symmetric surface generated by $\gamma_i$.
More precisely, let
$$
X_i(t,\theta):=\ve^{-1}(
\gamma_{i,1}(\ve p t) \cos (\ve \theta),
\gamma_{i,1}(\ve p t) \sin(\ve \theta),
\gamma_{i,3}(\ve p t)),
$$
for $t\geq 0$, $\theta \in [0,2\pi/\ve]$, where $p=\gamma_1(0)$. The factor $p$ is to make the parametrization conformal at $t=0$, which we will take to be $s = R_{tr}+20$. The unit normal vector is
$$
\nu (t,\theta) = (
-\gamma_{i,3}'(\ve p t) \cos (\ve \theta),
-\gamma_{i,3}'(\ve p t) \sin(\ve \theta),
\gamma_{i,1}'(\ve p t)).
$$
We parametrize  the $i$-th wing of $\mathcal M$ in the region $R_{tr}+20\leq s \leq 5 \delta_s/\ve$ by
$$
(s,\theta) \mapsto X_i(s-(R_{tr}+20),\theta) + u(s,\theta) \nu(s-(R_{tr}+20),\theta)
$$
where the function $u$ is given by
$$
u(s, \theta) = p  f_{\alpha} ( s, \theta) \eta(\ve  s),
$$
where $f_{\alpha}$ is as in Lemma \ref{lem:scherk-properties} and $\eta$ is a cut-off function satisfying $\eta(s) =1$ for $s \leq 4 \delta_s $ and $\eta(s) = 0$ for $s \geq 5 \delta_s$. For $s\geq 5 \delta_s / \ve$ the surface $\mathcal M$ is the union of the four pieces of rotationally symmetric self-translating surfaces generated by the graphs of $\gamma_i$'s.

In the region $R_{tr}+9\leq s \leq R_{tr}+20$ we smoothly interpolate the previous parametrization for $s\geq R_{tr}+20$ with the corresponding one for $s\leq R_{tr}+9$, where the surface can be written as the graph of a function over a cone.

\begin{lemma} There exists a constant $\delta_p>0$ depending only on $\delta_{\alpha}$ so that the surface $\mathcal M [\ve, \beta_1, \beta_4, \tau_1, \tau_4]$ is embedded for $\beta_1, \beta_4, \tau_1, \tau_4 \in (-\delta_p, \delta_p)$ and $\ve \in (0, \delta_p)$. Moreover, $\mathcal M$ depends smoothly on $\beta_1, \beta_4, \tau_1$, and $\tau_4$. It also depends smoothly on $\ve$, except on a countable set.
\end{lemma}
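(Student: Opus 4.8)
The plan is to verify embeddedness and smooth dependence by examining $\mathcal{M}$ piece-by-piece and then checking that the finitely many interface zones patch together without self-intersection, provided the dislocation parameters and $\ve$ are small.

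First I would set up the decomposition of $\mathcal{M}$ into three geometrically distinct regions: the \emph{wrapped dislocated Scherk core} (roughly $s\le R_{tr}+10$, the image of $\lambda_\ve\ve\bar\Sigma^\uparrow$ under the wrapping), the \emph{transition zone} ($R_{tr}+9\le s\le R_{tr}+20$, where the bent Scherk wings are interpolated with graphs over the rotationally symmetric ends), and the \emph{four ends} ($s\ge R_{tr}+20$, normal graphs of $p f_\alpha\,\eta(\ve s)$ over the surfaces of revolution generated by the $\gamma_i$). For the core, embeddedness of $\Sigma(\alpha)$ is part of Lemma~\ref{lem:scherk-properties}; the dislocations $F^i[\alpha,\beta_i,\tau_i]$ are $C^\infty$-small perturbations (rotations $Z_{\beta_i}$ by $|\beta_i|<\delta_p$ and translations by $|\tau_i|<\delta_p$, localized by $\eta_{R_{rot}},\eta_{R_{tr}}$), so for $\delta_p$ small $\Sigma'$ remains embedded with the wings staying in their respective quadrants and clear of the $xz$- and $yz$-planes; this is where one uses that the intersection angles exceed $4\delta_\alpha$ to get a uniform separation between wings. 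The wrapping map $B_{\ve,\rho_\ve}$ is a diffeomorphism from a tubular neighborhood of a segment of the $z$-axis of length $2\pi\ve^{-1}\rho_\ve$ onto a neighborhood of the circle of radius $\ve^{-1}\rho_\ve$ precisely because $\rho_\ve\in\ve\Z$, so the segment's two endpoints are glued consistently after scaling by $1/\ve$ — this is the reason $\rho_\ve$ was chosen in $\ve\Z$, and it is the point at which embeddedness (as opposed to mere immersedness) is genuinely used. Since $\Sigma'$ is supported in a bounded tube whose $z$-extent, after scaling by $\ve$, is $O(1)$, for $\ve$ small it injects under $B_{\ve,\rho_\ve}$.

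Next I would handle the ends. Each $X_i$ is an embedded surface of revolution over an embedded profile curve $\gamma_i$ (the solutions of \eqref{eq:self-trans-ode} with the prescribed initial position and velocity), which by the discussion preceding Lemma~\ref{lemma:order-gamma} is graphical over the cone for large $s$ and stays properly embedded; adding the normal graph of $u(s,\theta)=pf_\alpha(s,\theta)\eta(\ve s)$ keeps it embedded because $\|f_\alpha\|$ and its derivatives are exponentially small in $s$ by Lemma~\ref{lem:scherk-properties}, hence uniformly bounded and small on the relevant range, so $|u|$ and $|\nabla u|$ are small and the normal graph does not fold. The four ends are asymptotic to well-separated cones (two asymptotically paraboloidal, two asymptotically catenoidal, by the choice of $\gamma_i$ and the asymptotics \eqref{asymp varphi}), so globally they do not collide with one another or with the core once $\ve$ is small. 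The transition region $R_{tr}+9\le s\le R_{tr}+20$ is a compact patch on which two already-embedded pieces that are $C^1$-close (the $C^1$ matching of cones established via \eqref{eq:CP'}) are interpolated by a fixed smooth cutoff; embeddedness there is an open condition satisfied by the limiting configuration as $\ve\to0$, so it persists for $\ve$ small.

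The global conclusion then follows by a compactness/contradiction argument: if $\mathcal M[\ve,\beta_1,\beta_4,\tau_1,\tau_4]$ failed to be embedded for a sequence $\ve_k\to0$ and parameters in the compact box, a self-intersection point would have to occur either within one of the uniformly embedded pieces (impossible for small parameters by the above) or between two distinct pieces; but distinct pieces are separated by a fixed positive distance in the rescaled picture — the wings leave the core into disjoint quadrant-cones, and the ends follow disjoint asymptotic cones — contradicting the collision. Smooth dependence on $(\beta_1,\beta_4,\tau_1,\tau_4)$ is immediate since $Z_{\beta_i}$, the translations, all the cutoffs, and $f_\alpha$ (via Lemma~\ref{lem:scherk-properties}) depend smoothly on these; smooth dependence on $\ve$ holds on the complement of the countable set $\{\ve:\rho/\ve\in\Z+\tfrac12\}$ where $\rho_\ve$ (the nearest point of $\ve\Z$ to $\rho$) jumps, changing the integer number of Scherk periods. \textbf{The main obstacle} I expect is bookkeeping the interaction between the wrapping and the dislocations: verifying that after $B_{\ve,\rho_\ve}\circ Z_\beta$ the dislocated first and fourth wings do not re-enter the tube occupied by the core or by the opposite wings, and that the radius-$\rho'$ bookkeeping (the shift by $\ve b_\alpha\cos\beta/\sin\alpha$ and the rescaling $\lambda_\ve=\tilde\rho/\rho'$) is consistent with the $C^1$ matching to $\tilde{\mathcal W}$ and $\tilde{\mathcal P}$ — essentially showing all the $O(\ve)$ and $O(\ve^2)$ corrections introduced in Section~\ref{section:Fitting the Scherk surface} are mutually compatible and do not conspire to create a fold.
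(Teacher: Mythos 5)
Your proposal correctly identifies the trivial parts (the Scherk core is embedded by Lemma~\ref{lem:scherk-properties}, the dislocations and the wrapping are small local perturbations, the transition zone is a compact patch where embeddedness is open, smooth dependence is immediate from the formulas, and smoothness in $\ve$ fails only on the countable set where $\rho_\ve\in\ve\Z$ jumps). In fact the paper dispatches all of this in one sentence and isolates the single genuinely nontrivial point: \emph{showing that the four unbounded ends, once dislocated by $\beta_i,\tau_i$, do not come to intersect one another as $r\to\infty$}. Here your proposal has a gap.

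You argue that ``the four ends are asymptotic to well-separated cones (two asymptotically paraboloidal, two asymptotically catenoidal)\,\ldots so globally they do not collide.'' This is not correct. By \eqref{g}, \eqref{asymp varphi}, and the discussion around Lemma~\ref{lemma:order-gamma}, \emph{all} radially symmetric self-translating ends — paraboloidal and catenoidal alike — have the same leading asymptotics $F(r)=\tfrac{r^2}{2}-\log r + c + o(1)$, differing only by an additive constant $c$; they are not separated by distinct cones. Consequently, two ends that are moved closer together by a dislocation have no a priori reason from asymptotic geometry to remain disjoint at all radii. The paper handles this with an ODE comparison principle for \eqref{eq varphi}: if two profile functions $\varphi_1,\varphi_2$ satisfy $\varphi_1(R)>\varphi_2(R)$, then by uniqueness of the first-order ODE they satisfy $\varphi_1(r)>\varphi_2(r)$ for all $r\geq R$, and integrating gives $F_1(r)>F_2(r)+m$ for all $r\geq R$ with $m>0$ uniformly as long as the initial gap $\varphi_1(R)-\varphi_2(R)$ stays bounded below. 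That is the ordered-ends argument your proposal needs and does not contain. The compactness/contradiction scheme you propose as a substitute also does not close this gap: a hypothetical self-intersection of two ends could occur at arbitrarily large $r$, outside any fixed compact set, so there is no subsequence to extract; one must use the monotonicity/comparison structure of the ODE, not a compactness argument, to rule out collisions at infinity.
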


\begin{proof}
The only point that needs an argument is that the unbounded ends do not intersect for all $\beta_1, \beta_4, \tau_1, \tau_4 $ small. This is a consequence of the following observation: consider two solutions $\varphi_i=\varphi_i(r)$, $i=1,2$ of \eqref{eq varphi} defined for all $r\geq R$ with initial conditions
$$
\varphi_i(R) = \varphi_{i,0} , \quad
\varphi_{1,0} > \varphi_{2,0} .
$$
By uniqueness of solutions to ODE,
$$
\varphi_1(r) > \varphi_2(r) \quad \forall r\geq R.
$$
Let $F_i(r)$ be such that $F_i'=\varphi_i$ and $F_1(0)=F_2(0)$.
It follows that
$$
F_1(r)>F_2(r) + m\quad\forall r\geq R ,
$$
with $m>0$ and $m$ remains positive if  $\varphi_{1,0} - \varphi_{2,0} $ remains positive.
 %\comment{can we get away with this proof?}
\end{proof}

 %
%
%On the region $\{ s \in (a+100, 5\delta_s/\ve)\}$, our approximate solution is just the graph of $f_{\alpha}$ over the surface generated by the
%The approximate solution $\mathcal M=\mathcal M[\ve, b_1, b_4, \tau_1, \tau_4]$ has the following overlapping pieces:
%	\begin{itemize}
%	\item its inner core is just the inner core of the shifted surface $\lambda_{\ve}\ve \bar \Sigma$ dilated by $1/\ve$.
%	\item its outer core is just the outer core of the shifted surface $\lambda_{\ve}\ve \bar \Sigma$ dilated by $1/\ve$.
%	\item the second wing is given by the parametrization $\lambda_{\ve} (B_{\ve, \rho_{\ve}} \circ Z_{\beta} \circ F_{\alpha}^2 )$ when $s \in [0, a+ 90]$, the parametrization $\gamma_2(s) + \psi_{s} (f_{\alpha}(s) \nu_2(s)$ for $s \in \delta
%	\end{itemize}

\subsection{Summary of notation and terminology}

We start by recalling the roles of the different parameters:
\begin{itemize}
\item $\ve$ controls the overall scale and the error  in the construction.
\item $\rho_{\ve}$ is  the closest number in $\ve \Z$ to $\rho$, which is the radius of the intersection of the paraboloid and catenoid in the original scale.
%is a number close (on the order of $\ve$) to the radius of the circle of intersection.
\item $\alpha$ is the angle associated to the original Scherk surface (see Figure \ref{fig:scherk}).
\item $\beta_1$ and $\beta_4$ are the angles of rotation of the first and fourth wings respectively.
\item $\tau_1$ and $\tau_4$ are the amount by which the first and fourth wings are translated (along the normal of the asymptotic plane to the respective wing).
\end{itemize}
Then we have the different scaled and bent Scherk surfaces:
\begin{itemize}
\item $\Sigma(\alpha)$ is the original minimal Scherk surface given by \eqref{scherk}. Its wings $W^i(\alpha)$ are asymptotic to the half-planes $A_\alpha^i(H^+)$.
\item $\Sigma'=\Sigma'[\alpha, \beta_1, \beta_4, \tau_1, \tau_4]$ is $\Sigma(\alpha)$ with dislocations (see Figure \ref{fig dislocations}).
\item $\bar \Sigma := B_{\ve, \rho_{\ve}} \circ Z_\beta (\bar \Sigma')$ is  $\Sigma'$ wrapped around a circle of radius $\rho_{\ve}/\ve$.
\item $\lambda_{\ve} \ve \bar \Sigma^{\uparrow}$ is the previous Scherk surface  scaled by a factor $\ve \lambda_\ve =  \ve (1+O(\ve^2))$ and shifted by $d_{\ve}$ in the $\vec e_z$ direction so that it fits the configuration in Figure \ref{fig4}.
\end{itemize}
Finally, let us recall the names of the different parts of the initial approximation  $\mathcal M=\mathcal M[\ve, \beta_1, \beta_4, \tau_1, \tau_4]$:
\begin{itemize}
\item The inner core is where the handles are.
\item The middle core is where we perform all the dislocations. It corresponds to the region $0 \leq s \leq R_{tr}+10$.

\item The region $ s \in (R_{tr}+9, R_{tr}+20)$ is a transition region called the outer core. Note that there is a change of scale in this region from $\lambda_{\ve} \ve$ to $\ve$ but this will not create too much error as the switch is contained in a bounded region with $s$ small compared to $\ve^{-1}$.

%\comment{I used a partition of unity for everything on the interval $s \in (R_{tr}+10, R_{tr}+20)$ instead of just adding the graph of $f_{\alpha}$ to $\tilde {\mathcal P}$.}

\item The core is the union of the inner, middle, and outer core.
%The inner and outer cores of $\mathcal M$ are just the inner and outer cores of $\lambda_{\ve}\ve \bar \Sigma^{\uparrow}$ dilated by $1/\ve$.

\item
% the mantle is the region where the surface $\mathcal M$ is the graph of $f_{\alpha}$ over a rotationally symmetric self-translating surface. It is where the parameter $s$ belongs to $(R_{tr}+20, 4 \delta_s/\ve)$.
In the region $\{ s \in (R_{tr}+20, 4 \delta_s/\ve\}$,  $\mathcal M$ is the graph of $f_{\alpha}$ over the rotationally symmetric surface generated by $\gamma_i$, $i=1,\ldots 4$.

%The parametrizations are given by the formula
%\[
%( \gamma_i^1(s) \cos \theta, \gamma_i^1(s) \sin \theta, \gamma_i^2(s)) + f_{\alpha}(s, \theta) (-\gamma_i^2(s) \cos \theta, -\gamma_i^2(s) \sin \theta, \gamma_i^1(s)),
%\]
%where $\gamma_i^j$ is just the $j$th component of $\gamma_i$ (i.e., $\gamma_i(s) = (\gamma_i^1(s), \gamma_i^2(s))$).

\item The region $\{ s \in (4 \delta_s/\ve, 5\delta_s/\ve)\}$ is a transition region where we cut-off the graph of $f_{\alpha}$. We have to do it far enough so that the function $f_{\alpha}$ is small and the length of the interval has to be long enough so we don't create too much error.
	
\item The wings are defined to be the region $\{ s \in (0, 5 \delta_s/\ve)\}$.% It contains the outer core and the mantle, as well the a transition region far away where we cut-off the graph of $f_{\alpha}$ over a long interval of length $\delta_s/\ve$.

\item For $\{ s \geq 5 \delta_s /\ve\}$, $\mathcal M$ is just the rotationally symmetric surface generated by the curves $\gamma_i$'s.
\end{itemize}

\section{Geometric computations}
\label{sec Geometric computations}

In this section, we perform computations related to the ansatz $\mathcal M$ . In particular, we prove Proposition~\ref{prop error}, which gives the error. We also use these computations to prove Proposition~\ref{prop quadratic} for the quadratic terms appearing in the expansion of the mean curvature and normal vector.

%Consider a surface $M$ immersed in $\R^3$ and let $\nu$ be a unit normal vector on $M$. Suppose that $u : M\to\R^2$ is a $C^2$ function and consider the surface
%$$
%M_u = \{ x + \nu(x) u(x) : x \in M \}.
%$$
%Let $H_u$, $\nu_u$ denote the mean curvature and unit normal vector to $M_u$. Then it is well known (see Kapouleas \cite{kapouleas-1990}) that $H_u$, $\nu_u$ have expansions for small $u$ of the form
%\begin{align}
%\label{expansion mean curvature}
%H_u = H  + \Delta u + |A|^2 + Q_1
%\end{align}
%$$
%\nu_u = \nu - \nabla u + Q_2
%$$
%where $Q_1$, $Q_2$ are quadratic functions in $u, \nabla u, D^2 u$.

\subsection{Perturbation by normal graphs}
\label{sec:prelim}
Consider a surface $M$ immersed in $\R^3$ with local parametrization of class $C^2$:
$$
X:U \subset \R^2 \to M, \qquad
X = X(x_1,x_2).
$$
We use the notation
$$
e_i = \partial_{i} X = \partial_{x_i} X
$$
for tangent vectors and we take the normal unit vector to be
$$
\nu = \frac{e_1\times e_2}{|e_1\times e_2|} ,
$$
where $\times$ is the cross product in $\R^3$.
The metric of $M$ is denoted by
$$
g_{ij} =\langle e_i ,e_j \rangle ,
$$
and its inverse by $g^{ij}$, where $\langle \cdot, \cdot \rangle$ is the standard inner product in $\R^3$.
We recall that
$$
\partial_i \nu = - A_i^{\ j} e_j
$$
where we use Einstein's convention of summation over repeated indices, and  $A_i^{\ j}$ is the second fundamental form, which can be computed as
$$
A_i^{\ j} = A_{ik} g^{kj} ,
\quad
A_{ij} = \langle  X_{ij}, \nu \rangle
= - \langle e_i, \partial_j \nu \rangle .
$$
The mean curvature $H$ of $M$ can be computed in the following way
\begin{align*}
H = \text{trace of $A$} = A_1^{\ 1}+A_2^{\ 2} =
g^{ij}
\langle X_{ij} , \nu \rangle .
\end{align*}

Consider a function $u \in C^2(M)$.
We write
$$
u_i = \partial_{x_i} u, \quad u_{ij} = \partial_{x_i x_j} u .
$$
Let
$
\tilde X = X + \nu u
$
be the graph of $u$ over $M$, we then have
\begin{align*}
\tilde e_i  = \partial_{x_i} \tilde X = e_i + u_i \nu - u A_i^{\ k} e_k
\end{align*}
and
\begin{align*}
\tilde g_{ij}
& =\langle \tilde e_i , \tilde e_j \rangle =
g_{ij} - u ( A_j^{\ l} g_{il} + A_i^{\ k} g_{kj})
+ u^2 A_i^{\ k} A_j^{\ l} g_{kl}
+ u_i u_j.
\end{align*}
We compute the cross product
\begin{multline*}
\tilde e_1 \times \tilde e_2
=%&=
e_1 \times e_2 (1 - u H  - u^2 G)
+ u_1 \nu \times e_2
+ u_2 e_1 \times \nu
\\
%& \quad
- u u_1 A_2^{\ l} \nu \times e_l
- u u_2 A_1^{\ k} e_k \times \nu,
\end{multline*}
where $G = A_1^{\ 1} A_2^{\ 2} -A_1^{\ 2} A_2^{\ 1}  $ is the Gauss curvature.
We also compute the derivatives of $\tilde X$:
\begin{align*}
\tilde X_{ij}
&=
\partial_j( e_i + \nu_i u + \nu u_i)
\\
&=
e_{ij}
+ u_{ij} \nu
- u_{i} A_j^{\ l}  e_l
- u_j A_i^{\ k} e_k
- u (A_i^{\ k})_j e_k
- u A_i^{\ k} e_{kj}
\end{align*}
The mean curvature of $M_u$ is given by
\begin{align*}
\tilde H =
\tilde g^{ij}
\langle
\tilde X_{ij} ,
\frac{\tilde e_1 \times \tilde e_2}{|\tilde e_1 \times \tilde e_2|}.
\rangle
\end{align*}
Explicitly, the scalar product is
\begin{multline*}
\langle
\tilde e_1 \times \tilde e_2
,
\tilde X_{ij}
\rangle
\\
=%&=
\langle
e_1 \times e_2 (1 - u H  - u^2 G)
+ u_1 \nu \times e_2
+ u_2 e_1 \times \nu
- u u_1 A_2^{\ l} \nu \times e_l
- u u_2 A_1^{\ k} e_k \times \nu ,
\\
%& \qquad\quad
e_{ij}
+ u_{ij} \nu
- u_{i} A_j^{\ l}  e_l
- u_j A_i^{\ k} e_k
- u (A_i^{\ k})_j e_k
- u A_i^{\ k} e_{kj}
\rangle
\end{multline*}
with $
\det \tilde g = | \tilde e_1 \times \tilde e_2|^2
$
and
\begin{align*}
\tilde g^{-1}
= \frac{1}{\det \tilde g}
\left[
\begin{matrix}
\tilde g_{22} &  - \tilde g_{12} \\
-\tilde g_{12} &  \tilde g_{11}
\end{matrix}
\right] .
\end{align*}

%Let us compute
%\begin{align*}
%\langle
%\tilde e_1 \times \tilde e_2
%,
%\tilde X_{ij}
%\rangle
%&=
%\langle
%e_1 \times e_2 (1 - u H  - u^2 G)
%+ u_1 \nu \times e_2
%+ u_2 e_1 \times \nu
%- u u_1 A_2^{\ l} \nu \times e_l
%- u u_2 A_1^{\ k} e_k \times \nu ,
%\\
%& \qquad\quad
%e_{ij}
%+ u_{ij} \nu
%- u_{i} A_j^{\ l}  e_l
%- u_j A_i^{\ k} e_k
%- u (A_i^{\ k})_j e_k
%- u A_i^{\ k} e_{kj}
%\rangle
%\end{align*}

%we have
%\begin{align*}
%\langle
%\tilde e_1 \times \tilde e_2
%,
%\tilde X_{ij}
%\rangle
%&=
%( 1 - u H  - u^2 G) \langle e_1\times e_2, e_{ij}\rangle
%+
%u_1 \langle \nu \times e_2, e_{ij} \rangle
%+
%u_2 \langle e_1 \times \nu, e_{ij} \rangle
%\\
%& \quad
%- u u_1 A_2^{\ l} \langle \nu \times e_l, e_{ij} \rangle
%- u u_2 A_1^{\ l} \langle e_l \times \nu, e_{ij} \rangle
%\\
%& \quad
%+ u_{ij} ( 1 - u H  - u^2 G) \langle e_1\times e_2, \nu\rangle
%\\
%& \quad
%+ u_1 u_i A_j^{\ 1} |e_1\times e_2|
%+ u_2 u_i A_j^{\ 2} |e_1\times e_2|
%\\
%&\quad
%- u u_1 u_i ( A_2^{\ 1} A_j^{\ 2} - A_2^{\ 2} A_j^{\ 1})|e_1\times e_2|
%\\
%&\quad
%...
%\end{align*}
%Hence

\subsection{Norms of tensors}
We work with the following norms, which are independent of coordinates.

\begin{definition} The pointwise norm of a tensor $T_{a_1 \ldots a_r}^{\ \ \ \ \ \ b_1\ldots b_s}$ is given by
\[
| T |^2 :=  T_{a_1 \ldots a_r}^{\ \ \ \ \ \ b_1\ldots b_s}  T_{c_1 \ldots c_r}^{\ \ \ \ \ \ d_1\ldots d_s} g^{a_1c_1} \cdots g^{a_r c_r} g_{b_1 d_1} \cdots g_{b_s d_s} ,
\]
with summation over repeated indices.
\end{definition}
We use the notation $\nabla_i $ to denote the covariant derivative with respect to $\frac{\partial}{\partial x_i}$.
In the case where the metric is diagonal, the norms of the gradient and Hessian of a function $u$ are
\begin{gather*}
| \nabla u |^2 = |u_i u_j g^{ij}| = \frac{(u_1)^2}{g_{11}} + \frac{(u_2)^2}{g_{22}},\\
|\nabla^2 u|^2 = (g^{11} \nabla_{11} u )^2 + (g^{22}\nabla_{22} u )^2 + 2 g^{11}g^{22} (\nabla_{12} u)^2.
\end{gather*}
For the second fundamental form, we have
\begin{gather*}
| A |^2 = (g^{11}A_{11} )^2  + (g^{22}A_{22} )^2 + 2 (g^{11} g^{22} A_{12} A_{12} ), \\
|\nabla A|^2 = \sum_{i,j,k =1}^2  (\nabla_{i} A_j^{\ k} )^2 g^{ii} g^{jj} g_{kk}.
\end{gather*}

\subsection{Geometry of rotationally symmetric self-translating surfaces}
We compute various geometric quantities attached to the parametrization $X_{\ve}$ given in \eqref{def X}. We use $\partial_i $ or $(\ )_i$ to denote regular differentiation with respect to the variables $s$ ($i=1$) or $\theta$ ($i=2$).
Let $\{e_1, e_2\}$ be the tangent vectors to $S_{\ve}$ given by
\begin{align}
\label{e1}
e_1 & = \partial_1 X_{\ve}= p (\gamma'_1(\ve ps) \cos (\ve \theta), \gamma'_1 (\ve ps) \sin(\ve \theta), \gamma'_3 (\ve ps)),\\
\label{e2}
e_2 & =  \partial_2 X_{\ve}=(- \gamma_1 (\ve ps) \sin(\ve \theta), \gamma_1 (\ve ps) \cos(\ve\theta), 0),
\end{align}
We recall that $(\gamma_1(s), \gamma_3(s))$ is parametrized by arc length and that $p:=\gamma_1(0)>0$.
The associated metric is then
\begin{equation}
\label{eq:metric}
g_{11} = p^2, \quad g_{12} = g_{21} = 0, \quad g_{22} = \gamma_1^2.
\end{equation}
The only nonzero Christoffel symbols are
\begin{gather}
\Gamma_{22,1} = - \ve p \gamma_1 \gamma_1', \quad \Gamma_{12,2}=\Gamma_{21,2} = \ve p  \gamma_1 \gamma_1', \notag \\
\Gamma_{22}^1  = -\ve\frac{\gamma_1}{p}\gamma_1', \quad \Gamma_{12}^2 = \Gamma_{21}^2 = \ve\frac{p}{\gamma_1}{\gamma_1'}. \label{eq:christoffel}
\end{gather}
Using $A_{ij} = \langle \partial_i e_j , \nu \rangle$, we obtain the coordinates of the second fundamental form
\begin{align}
\label{eq:A}
\left\{
\begin{aligned}
A_{11} &= - \ve p^2 (-\gamma_1 '' \gamma_3' + \gamma_1' \gamma_3''), &
A_{12} &=0, &
A_{22} &= \ve \gamma_1 \gamma_3' \\
A_1^{\ 1} &= - \ve (-\gamma_1'' \gamma_3'+ \gamma_1'\gamma_3 '')&
A_{1}^{\ 2} &= A_2^{\ 1} = 0 &
A_2^{\ 2} &= \ve \gamma_1^{-1}\gamma_3'
\end{aligned}
\right.
\end{align}
where all the functions are taken at $\ve ps$.

The following proposition is an immediate corollary of \eqref{eq:A} and the growth of $\gamma_1, \gamma_3$ given in Lemma \ref{lemma:order-gamma}.
\begin{prop}
\label{prop:boundA}
In the coordinates given by $X_{\ve}$, the second fundamental form $A$ and Christoffel symbols on $S_{\ve}$ satisfy
\begin{gather}
\label{eq:boundA}
\left| \frac{d ^k }{ds ^k} A_{i}^{\ j} (\ve p s) \right| \leq C \ve^{k+1},\\%p^k
\label{eq:boundGamma}
|\Gamma_{ij}^k| \leq C \ve.
\end{gather}
\end{prop}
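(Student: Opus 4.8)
The plan is to read both estimates off the explicit formulas \eqref{eq:A} and \eqref{eq:christoffel} that have just been computed, so the proof is essentially a matter of bookkeeping the powers of $\ve$ produced by the chain rule and checking that the data built from the generating curve $\gamma$ is bounded uniformly on all of $[0,\infty)$.

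First I would set $\sigma = \ve p s$. By \eqref{eq:A} the only nonzero mixed components, viewed as functions of $s$, are $A_1^{\ 1}=-\ve\bigl(-\gamma_1''\gamma_3'+\gamma_1'\gamma_3''\bigr)$ and $A_2^{\ 2}=\ve\,\gamma_3'/\gamma_1$, with all the $\gamma$'s evaluated at $\sigma$, while $A_1^{\ 2}=A_2^{\ 1}=0$. Since $\frac{d}{ds}=\ve p\,\frac{d}{d\sigma}$, each $s$-derivative pulls out another factor $\ve p$, so $\frac{d^k}{ds^k}A_i^{\ j}$ equals $\ve^{k+1}$ times a fixed polynomial expression in $p$, in $\gamma_1(\sigma)^{-1}$, and in the derivatives $\gamma_1^{(m)}(\sigma),\gamma_3^{(m)}(\sigma)$ with $m\le k+2$. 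Thus \eqref{eq:boundA} follows once these $\gamma$-quantities are bounded uniformly in $\sigma\in[0,\infty)$. The bound \eqref{eq:boundGamma} is even easier: by \eqref{eq:christoffel} the only nonzero symbols are $\Gamma_{22}^1=-\ve\,\gamma_1\gamma_1'/p$ and $\Gamma_{12}^2=\Gamma_{21}^2=\ve\,p\,\gamma_1'/\gamma_1$ at $\sigma$, so I would just need $\gamma_1\gamma_1'$ and $\gamma_1'/\gamma_1$ bounded on $[0,\infty)$.

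For the uniform bounds I would separate the compact range $\sigma\in[0,\sigma_0]$, where everything follows from smoothness of $\gamma$ together with $\gamma_1\ge c>0$ there (the parametrization \eqref{def X} describes a genuine surface of revolution, so $\gamma_1>0$, and $\gamma_1(0)=p>0$), from the asymptotic range $\sigma\to\infty$, where Lemma \ref{lemma:order-gamma} gives $\gamma_1(\sigma)\to\infty$ (hence $\gamma_1^{-1}$ bounded), $\gamma_1\gamma_1'=\tfrac12(\gamma_1^2)'=1+o(1)$, $\gamma_1'/\gamma_1=O(\sigma^{-1})$, and decay of $\gamma_1',\gamma_1'',\gamma_3',\gamma_3''$. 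The one step that is not completely immediate — and which I expect to be the only mildly delicate point — is the control of the higher derivatives $\gamma_1^{(m)},\gamma_3^{(m)}$ for $m\ge 3$, since Lemma \ref{lemma:order-gamma} lists only $m\le 2$: here I would invoke that the expansion \eqref{asymp varphi} of $\varphi=F'$ holds to arbitrary order, so that the expansions of $\gamma_1$ and $\gamma_3$ underlying Lemma \ref{lemma:order-gamma} (obtained from $\gamma_1=r$, $\gamma_3=F$, $ds/dr=\sqrt{1+\varphi^2}$) also hold to arbitrary order and may be differentiated term by term, giving $\gamma_1^{(m)}(\sigma),\gamma_3^{(m)}(\sigma)\to 0$ for every $m\ge 1$. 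Combining the compact and the asymptotic regimes then yields the required uniform bounds, and with them both \eqref{eq:boundA} and \eqref{eq:boundGamma}.
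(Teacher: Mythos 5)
Your proof is correct and follows exactly the route the paper intends: the paper declares the proposition an immediate corollary of \eqref{eq:A}, \eqref{eq:christoffel}, and Lemma~\ref{lemma:order-gamma}, and your argument simply spells out that reasoning, including the chain-rule bookkeeping (each $d/ds$ contributes $\ve p$) and the uniform boundedness of $\gamma_1\gamma_1'$, $\gamma_1'/\gamma_1$, and the derivatives of $\gamma$. Your remark about needing the expansion \eqref{asymp varphi} to arbitrary order to control $\gamma^{(m)}$ for $m\ge 3$ is a legitimate point the paper glosses over, and you resolve it correctly.
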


%The next proposition shows that we can bound the regular derivatives of $u$ and $A$ by the norm of the covariant derivatives.

\begin{prop}
\label{prop:normA}
We have
\begin{equation}
	\label{eq:normA}
	 | \nabla^k A | \leq C \ve^{k+1}, \quad k=0,1,2 .
	\end{equation}
%and
%	\[
%	\sum_{i,j=1}^2 |g^{ii} g^{jj} \partial_{ij} u|^2  \leq C (|\nabla^2 u|^2 + |\nabla u|^2),
%	\]
 \end{prop}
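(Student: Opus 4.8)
The plan is to bound the covariant derivatives $|\nabla^k A|$ for $k=0,1,2$ by combining the pointwise coordinate estimates of Proposition~\ref{prop:boundA} with the definition of the tensor norms given above, using that the metric \eqref{eq:metric} is diagonal. First I would write out the covariant derivatives $\nabla_i A_j^{\ k}$, $\nabla_i \nabla_l A_j^{\ k}$ explicitly in terms of ordinary $s$- and $\theta$-derivatives of the components $A_j^{\ k}(\ve p s)$ and the Christoffel symbols $\Gamma_{ij}^k$ and their derivatives. Since all components $A_j^{\ k}$ depend only on $s$ (not on $\theta$), the $\theta$-derivatives vanish and only the $s$-derivatives and the Christoffel correction terms survive; schematically, $\nabla_i A_j^{\ k}$ is a finite sum of terms of the form $\partial_s A$, $\Gamma \cdot A$, and for the second covariant derivative also $\partial_s^2 A$, $\Gamma\, \partial_s A$, $(\partial_s \Gamma)\, A$, $\Gamma\cdot\Gamma\cdot A$.

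Next I would insert the bounds from Proposition~\ref{prop:boundA}: $|\partial_s^m A_i^{\ j}|\le C\ve^{m+1}$ and $|\Gamma_{ij}^k|\le C\ve$, noting in addition that $|\partial_s^m \Gamma_{ij}^k|\le C\ve^{m+1}$, which follows from \eqref{eq:christoffel} together with Lemma~\ref{lemma:order-gamma} exactly as \eqref{eq:boundGamma} does (each extra $s$-derivative produces an extra factor $\ve p$ from the chain rule and the ratios $\gamma_1'/\gamma_1$, $\gamma_1\gamma_1'$, etc., stay bounded). Every term contributing to $\nabla^k A$ then carries a total power $\ve^{k+1}$: the $A$-factor (or its derivatives) and the $\Gamma$-factors (or their derivatives) together accumulate exactly $k$ extra powers of $\ve$ on top of the base power $\ve^1$ coming from $A$ itself.

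Finally I would pass from the coordinate components to the pointwise norm. Because $g_{11}=p^2$ and $g_{22}=\gamma_1^2$ are diagonal and the inverse entries $g^{ii}$ appearing in the formulas $|\nabla^k A|^2 = \sum (\text{products of } g^{ii}, g_{jj}) (\nabla^k A \text{ components})^2$ are multiplied against matching lower indices, the norm is a bounded combination of squared components with $\ve$-independent coefficients (here one uses that $\gamma_1(s)$ is bounded below and above on the relevant range, or more simply that the norm formula pairs each factor $g^{ii}$ with a factor $g_{jj}$ so the $\gamma_1$'s cancel in the index balance — this is the point of stating the norms in coordinate-free form). Taking square roots yields $|\nabla^k A|\le C\ve^{k+1}$.

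The main obstacle is the bookkeeping for $k=2$: one must be careful that the commutator/curvature terms and the $\partial_s\Gamma$ terms appearing in $\nabla\nabla A$ do not spoil the power counting. This is where the auxiliary estimate $|\partial_s^m\Gamma|\le C\ve^{m+1}$ is essential, and one should check it directly from \eqref{eq:christoffel}; once that is in hand, every term in the expansion of $\nabla^2 A$ is manifestly $O(\ve^3)$ and the argument closes. I do not expect any genuine difficulty beyond this careful accounting, since there is no interaction with the Scherk region here — the statement concerns only the rotationally symmetric pieces parametrized by $X_\ve$.
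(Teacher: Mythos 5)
The overall strategy is right, but the final step of your argument --- passing from coordinate components to the invariant norm --- is where the paper's proof has a subtlety that your proposal gets wrong. You claim that the metric factors pose no problem ``since $\gamma_1$ is bounded below and above on the relevant range, or more simply [because] the norm formula pairs each factor $g^{ii}$ with a factor $g_{jj}$ so the $\gamma_1$'s cancel.'' Neither justification holds. First, $\gamma_1(\ve p s)$ is unbounded on $S_\ve$ (it grows like $\sqrt{2\ve p s}$ by Lemma~\ref{lemma:order-gamma}), and Proposition~\ref{prop:normA} as stated is not restricted to a bounded $s$-range. Second, there is no cancellation: the weight for $|\nabla A|^2$ is $\sum_{k,i,j} (\nabla_k A_i^{\ j})^2\, g^{kk} g^{ii} g_{jj}$, and there is no index-balance constraint forcing equal numbers of $1$'s and $2$'s among the raised and lowered slots. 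For $(k,i,j)=(1,1,2)$ the factor is $g^{11} g^{11} g_{22} = \gamma_1^2/p^4$, which is unbounded.

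The paper closes this gap by an explicit check: $A_1^{\ 2}\equiv 0$, and since $\Gamma_{11}^l\equiv 0$ while $\Gamma_{1l}^2 A_1^{\ l}\equiv 0$, one finds $\nabla_1 A_1^{\ 2}\equiv 0$; the analogous computation gives $\nabla^2_{11} A_1^{\ 2}\equiv 0$. These are exactly the components paired with the unbounded weights, so the potentially dangerous terms drop out identically, and for all other index triples the weight $\sqrt{g^{kk} g^{ii} g_{jj}}$ (respectively $\sqrt{g^{ll} g^{kk} g^{ii} g_{jj}}$ for $k=2$) is uniformly bounded because $\gamma_1 \geq p>0$. You should add this vanishing check to your argument; once it is in place, the rest of your power-counting (including the auxiliary estimate $|\partial_s^m \Gamma_{ij}^k|\leq C\ve^{m+1}$, which is correct and needed for the $k=2$ case) goes through exactly as the paper's does.
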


\begin{proof}
The fact that $|A|^2 \leq C \ve^2$ is straightforward from \eqref{eq:A}, \eqref{eq:metric} and Proposition \ref{prop:boundA}. For the first covariant derivative of $A$, we recall
$
\nabla_k A_i^{\ j} = \partial_{k} A_1^{\ j} - \Gamma_{ki}^l A_l^{\ j} + \Gamma_{kl}^j A_i^{\ l}$ with implied summation over $l=1,2$. Upon inspection, $\nabla_1 A_1^{\ 2}$ vanishes. If  $(i, j, k) \ne (1,2,1)$, the quantity $\sqrt{g^{kk} g^{ii} g_{jj}} $ is bounded, therefore
\begin{align*}
\sqrt{g^{kk} g^{ii} g_{jj}} |\nabla_k A_i^{\ k} | & \leq C(|\partial_{k} A_1^{\ j}| + | \Gamma_{ki}^l A_l^{\ j}| + |\Gamma_{kl}^j A_i^{\ l}|)\\
& \leq C \ve^2
\end{align*}
and the estimate for $|\nabla A|$ is proved.

For the second covariant derivative, we argue similarly. Recall that
$
\nabla^2_{lk} A_i^{\ j} = \partial_l(\nabla_k A_i^{\ j}) - \Gamma_{lk}^m \nabla_m A_i^{\ j} - \Gamma_{li}^m \nabla_k A_m^{\ j} + \Gamma_{lm}^j \nabla_{k} A_i^{\ m}.
$
Note that $\nabla_{11}^2 A_1^{\ 2} =0$. As before, if $(i, j, k, m) \ne (1,2,1,1)$, the product $\sqrt{g^{kk} g^{ii} g_{jj} g^{ll}}$ is bounded and we prove $|\nabla^2 A|\leq C \ve^3$ by combining \eqref{eq:boundA}, \eqref{eq:boundGamma}, and \eqref{eq:normA} for $k=0,1$.
%In coordinates, the second covariant derivative of $u$ is $\nabla^2_{ij} u = u_{ij} - \Gamma_{ij}^k u_k$. We have
%\begin{gather*}
%\nabla^2_{11} u = u_{11} \\
%|\sqrt{g^{11}g^{22}} \Gamma_{12}^k u_k | = |\ve \gamma_1' \gamma_1^{-2} u_2| \leq C \ve |\nabla u| \\
%|g^{22} \Gamma_{22}^1 u_1| \leq C \ve |\nabla u|
%\end{gather*}
%Therefore, for fixed $i$ and $j$,
%\[
%|\sqrt {g^{ii} g^{jj}} \partial_{ij} u |  \leq C(|\nabla^2 u| + \sum_{k=1}^2 |\sqrt{g^{ii} g^{jj}} \Gamma_{ij}^k u_k| ) \leq C (|\nabla^2 u| + |\nabla u|_*) \qedhere
%\]
 \end{proof}

\subsection{Computation of the error}
%\label{sec:normal-pertubation}

\begin{proof}[Proof of Proposition~\ref{prop error}]
%Let us recall that the error is $E = H - \ve \nu \cdot e_z$.

The initial approximation $\mathcal M$ consists of three parts. The core of $\mathcal M$ is a smooth perturbation of a compact piece of Scherk minimal surface, when we consider one period only. This introduces curvatures of order $\ve$, together with some dislocations, so the statement of Proposition~\ref{prop error} follows directly for the error restricted to this part.

The region $s\geq 5 \delta_s/\ve$ of $\mathcal M$ is a rotationally symmetric self-translating surface, so that $E$ is zero here.

%We split $\bar \Sigma$ in three regions: $\{ s \leq 90 +a\}$, $\{ 90+a \leq s \leq 4 \delta_s/\ve\}$, and $\{ s \geq 4 \delta_s/\ve\}$.

%Where $s \leq R_{tr}+10$, $\bar \Sigma$ is a smooth perturbation of a minimal Scherk surface so
%	\[
%	|\bar H - \ve \langle e_z, \bar \nu \rangle| \leq C (\ve + |\beta_1|+|\beta_2| + |\tau_1| + |\tau_2|).
%	\]

Where $ R_{tr}+20 \leq s \leq 5\delta_s/\ve $, the surface $\mathcal M$ is a graph  over a self-translating rotationally symmetric surface $S_\ve$. We parametrize $S_\ve$ with $X_{\ve}$ defined in \eqref{def X}, which we will write for convenience as $X$, so that
$$
X(s,\theta):=\ve^{-1}(\gamma_1(\ve p s) \cos (\ve \theta), \gamma_1(\ve p s) \sin(\ve \theta), \gamma_3(\ve p s)),
$$
where $p=\gamma_1(0)$, and $s\in [0,\infty)$, $\theta\in [0,2\pi]$.

Then $\mathcal M$ in this region is parametrized by
\begin{equation}
\label{eq:tildeX}
(s,\theta) \mapsto \tilde X (s,\theta) = X(s,\theta) + u(s,\theta) \nu .
\end{equation}
where the function $u$ is given by
\begin{equation}
\label{eq:def-u}
u(s, \theta) = p  f_{\alpha} ( s, \theta) \eta(\ve  s),
\end{equation}
with $f_{\alpha}$ given in Lemma \ref{lem:scherk-properties} %\comment{reference here}
and $\eta$ is a cut-off function satisfying $\eta(s) =1$ for $s \leq 4 \delta_s $ and $\eta(s) = 0$ for $s \geq 5 \delta_s$. %\comment{getting confuse: are our cutoff function $\psi$ or $\eta$? Fix the notation}
We observe that
\begin{equation}
\label{eq:Dku}
| e^{s} \partial^k u (s, \theta) | \leq C e^{-a}, \quad k=0, \ldots, 5,
\end{equation}
and that $f_\alpha(s,\theta)$ satisfies the minimal surface equation \eqref{min surf eq}.

In the rest of this proof,  $g, A, H, \nu$ denote the metric, second fundamental form, mean curvature and Gauss map of the rotationally symmetric surface $S_\ve$ and  $\tilde g, \tilde A, \tilde H, \tilde \nu$ the ones of $\mathcal M$ given by the parametrization above.

In the rest of this section, we shall work in the region  $s \leq 5\delta_s/\ve$. Following the calculations of Section~\ref{sec:prelim} with the parametrization \eqref{eq:tildeX} and using \eqref{eq:normA} we have
\begin{align*}
\tilde e_1 & = (1- u A_1^{\ 1}) e_1  - u A_1^{\ 2} e_2 +  u_1 \nu
= e_1 + \nu u_1 + O(\ve e^{-s})
, \\
\tilde e_2 & =  - u A_2^{\ 1} e_1 +(1- u A_2^{\ 2}) e_2 + u_2 \nu
= e_2 + \nu u_2 + O(\ve e^{-s})
,
\end{align*}
where $O(\ve e^{-s})$ is in the $C^2$ sense on the region $ s \leq 5\delta_s/\ve$ as $\ve \to 0$, and $e_1$, $e_2$ are given in \eqref{e1}, \eqref{e2}.
We compute the metric $\tilde g_{ij}$:
\begin{align}
\label{eq:tildeg}
\tilde g_{ij}
& = g_{ij} - 2 u A_{ij} + u^2 A_i^{\ l} A_j^{\ m} g_{lm} + u_i  u_j = g_{ij} 	+ u_i u_j + O(\ve e^{-s}).
\end{align}
Using \eqref{eq:metric}, we have
$$
\tilde g^{-1} =
%\frac{1}{p^4 + p^2 (u_1^2+u_2^2) }
\frac{1}{\det(\tilde g) }
\left(
\begin{matrix}
 \gamma_1^2  + u_2^2 & -u_1 u_2 \\
 -u_1 u_2 & p^2 + u_1^2
\end{matrix}
\right) +O(\ve e^{-s}),
$$
where $\gamma_1$ is evaluated at $p \ve s$, and
\begin{align}
\label{det tilde g}
\det(\tilde g) = p^2 \gamma_1^2 + \gamma_1^2 u_1^2+  p^2 u_2^2 + O(\ve  e^{-s}) .
\end{align}

For the normal direction, we recall that $e_1$ and $e_2$ are orthogonal and obtain  	
\begin{align}
\label{cross1}
\tilde e_1 \times \tilde e_2
%&= (1 - u H )
%\sqrt{\det (g)}  \nu - u_1  \frac{\sqrt{g_{22}}}{\sqrt{g_{11}} }e_1
%- u_{2} \frac{\sqrt{g_{11}}}{\sqrt{g_{22}}} e_2 + O(\ve e^{-s})
%\\
&=
\det (g)^{1/2}  \nu - u_1 e_1
- u_{2} e_2 + O(\ve e^{-s}). %\quad s\leq 5\delta_s/\ve.
\end{align}
Next, we compute
\begin{align}
\tilde X_{ij} = e_{ij} +  u_{ij} \nu + O(\ve e^{-s})%, \quad s\leq 5\delta_s/\ve
\end{align}
and
\begin{align*}
\tilde A_{ij} = \langle \tilde X_{ij},\tilde \nu\rangle
= \frac{\det(g)^{1/2}}{\det(\tilde g)^{1/2}} ( A_{ij}  + u_{ij})
+ O(\ve e^{-s}) .
\end{align*}
Since $
\frac{\det (g)}{\det(\tilde g)} = 1 + O(e^{-s})
$
and $A_{ij}$ are $O(\ve)$,% in the region $s\leq 5\delta_s/\ve$,
we get
\begin{align}
\label{expansion Aij}
\tilde A_{ij}
= A_{ij}  + u_{ij} + O(\ve e^{-s}) .
\end{align}
With this, the second fundamental form can be expressed as
\begin{align*}
\tilde A_1^{\ 1}
&=
\frac{1}{\det(\tilde g)}
\left[
( A_{11} + u_{11})(\gamma_1^1+u_2^2)
-( A_{12}  + u_{12})u_1u_2
\right]+ O(\ve e^{-s})
\\
%\tilde A_1^{\ 2}
%&=
%\frac{1}{\det(\tilde g)}
%\left[
%- ( A_{11} + u_{11}) u_1 u_2
%+ ( A_{12}  + u_{12}) (p^2 + u_1^2)
%\right]+ O(\ve e^{-s})
%\\
%\tilde A_2^{\ 1}
%&=
%\frac{1}{\det(\tilde g)}
%\left[
%( A_{21} + u_{21})(\gamma_1^1+u_2^2)
%-( A_{22} + u_{22})u_1u_2
%\right]+ O(\ve e^{-s})
%\\
\tilde A_2^{\ 2}
&=
\frac{1}{\det(\tilde g)}
\left[
- ( A_{21} + u_{21}) u_1 u_2
+ ( A_{22} + u_{22}) (p^2 + u_1^2)
\right]+ O(\ve e^{-s}) .
\end{align*}
Therefore
\begin{multline*}
\tilde H
= \frac{1}{\det(\tilde g)}
\left[
A_{11} \gamma_1^2 + A_{22} p^2 + u_{11}(\gamma_1^2 + u_2^2) - 2 u_{12} u_1 u_2 + u_{22}(p^2+u_1^2)
\right] \\
+O(\ve e^{-s}) .
\end{multline*}
Let
$
\bar u = \frac{1}{p} u = f_{\alpha} (s,\theta) \eta(\ve s).
$
 We recall that
$
H = \frac{1}{\det(g)}\left[ A_{11} \gamma_1^2  + A_{22} p^2 \right]
$
and expand $\gamma_1(p \ve s) = p + O(\ve s)$ in the region $s \leq 5\delta_s/\ve$ to get
\begin{align*}
\tilde H
&=
 H
+
\frac{ p^3}{\det(\tilde g)}
\left[
\bar u_{11}(1+ \bar u_2^2)
-2 \bar  u_{12}\bar u_1 \bar u_2
+
\bar  u_{22}(1+\bar u_1^2)
\right] + O(\ve s e^{-s})
\end{align*}
Because $\eta(\ve s)=1$ for $s \leq 4 \delta_s/\ve$ and $f_{\alpha}$ satisfies \eqref{min surf eq},  we actually  have
$$\bar u_{11}(1+ \bar u_2^2)
-2 \bar  u_{12}\bar u_1 \bar u_2
+
\bar  u_{22}(1+\bar u_1^2)
=0$$
 in this region. Thus we obtain
$$
\tilde H = H + O(\ve s e^{-s}), \quad s \leq 4 \delta_s/\ve.
$$
Also in this region, from \eqref{det tilde g}, \eqref{cross1} and $|\tilde e_1 \times \tilde e_2| = \det(\tilde g)^{1/2}$ we also have $\ve \tilde \nu \cdot e_z = \ve \nu \cdot e_z + O(\ve e^{-s})$ and since $S_\ve$ is a self translating surface, we get
\begin{align*}
\tilde H - \ve \tilde \nu \cdot e_z = O(\ve e^{-\gamma s}).
\end{align*}
The same estimate holds for derivatives of  $\tilde H - \ve \tilde \nu \cdot e_z$ (all the parametrizations here are smooth), which implies the corresponding estimate in the weighted $C^\alpha$ norm.

When $ 4 \delta_s/\ve  \leq s \leq  5 \delta_s/\ve  $, we have $|\partial^k u(s) | \leq C e^{-s} \leq \ve e^{-\gamma s}$ for $\ve$ small enough, so
$$
\tilde H = H + O(\ve e^{-\gamma s}) , \quad
\ve \tilde \nu \cdot e_z = \ve  \nu \cdot e_z + O(\ve e^{-\gamma s}) ,
$$
and  the desired estimate holds.
%
%
%{\color{red}
%Because of the region, $\gamma_1 (\ve p s)$ is bounded from above and below. }
%
%
\end{proof}

\subsection{Estimate of the Jacobi operator}

%For the rest of the paper, we work with graphs over the ansatz so the linear operator and the quadratic remainder is taken with respect to the metric and second fundamental form on the ansatz. We first show that these are close to the ones on the rotationally symmetric surface $S_{\ve}$, then use it to bound the quadratic error.

Here we use the following notation:
The metric, Christoffel symbols, and second fundamental forms on a rotationally symmetric piece of self-translating surface are denoted by $g$, $\Gamma_{ij}^k$, and $A$, the corresponding quantities for the ansatz are $g_{\mathcal M}$ , $\Gamma_{ij,\mathcal M}^k$, and $A_{\mathcal M}$ while the ones on the corresponding original Scherk surface are $g_{\Sigma}$, $\Gamma_{ij, \Sigma}^k$,  and $A_{\Sigma}$. For short, we write $\Delta_{g_{\mathcal M}}=\Delta_{\mathcal M}$, $\Delta_{g_{\Sigma}}=\Delta_{\Sigma}$.
%Finally, the metric on the plane is $g_0 := p^2 \delta_{ij}$.
In the following proposition, the operators on $\mathcal M$ (the left-hand side) are pulled back to $\Sigma$ using the transformations of Section~\ref{sec:construction of M}.
\begin{prop}
\label{prop:linearopclose}
For $s \leq 5 \delta_s/\ve$, we have
\[
\Delta_{\mathcal M}+ | A_{\mathcal M}|^2 + \ve \langle e_z ,\nabla_{g_{\mathcal M}}  \rangle = \Delta_{\Sigma} + |A_{\Sigma}|^2 + L'
\]
where $L' $ is a second order differential operator with coefficients with $C^{1}$ norm bounded by $O(\delta_s + \delta_p + \ve)$.
\end{prop}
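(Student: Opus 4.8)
The plan is to prove the identity piece by piece, following the decomposition of $\mathcal M$ in Section~\ref{sec:construction of M}: the core $\{s\le R_{tr}+20\}$, the graphical region $\{R_{tr}+20\le s\le 5\delta_s/\ve\}$, and the two bounded interpolation layers between them. On each piece I would compare, in the coordinates $\mathcal M$ carries there and which are pulled back to $\Sigma=\Sigma(\alpha)$ as in Section~\ref{sec:construction of M}, the coefficients of $\Delta_{\mathcal M}+|A_{\mathcal M}|^2$ — namely $g_{\mathcal M}^{ij}$, the $\Gamma^k_{ij,\mathcal M}$, and $|A_{\mathcal M}|^2$ — with $g_\Sigma^{ij}$, $\Gamma^k_{ij,\Sigma}$, $|A_\Sigma|^2$. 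Since $\Delta=g^{ij}(\partial_i\partial_j-\Gamma^k_{ij}\partial_k)$, the operator $L'$ then has second-order coefficients $g_{\mathcal M}^{ij}-g_\Sigma^{ij}$, first-order coefficients assembled from $g_{\mathcal M}^{ij}(\Gamma^k_{ij,\mathcal M}-\Gamma^k_{ij,\Sigma})$ and $(g_{\mathcal M}^{ij}-g_\Sigma^{ij})\Gamma^k_{ij,\Sigma}$, the zeroth-order term $|A_{\mathcal M}|^2-|A_\Sigma|^2=2\langle A_\Sigma,A_{\mathcal M}-A_\Sigma\rangle+|A_{\mathcal M}-A_\Sigma|^2$, plus the transport term $\ve\langle e_z,\nabla_{g_{\mathcal M}}\rangle$, whose coefficient in front of $\partial_i$ is $\ve g_{\mathcal M}^{ij}\langle e_z,\partial_j X\rangle$, hence of $C^1$ norm $O(\ve)$ since on $\{s\le 5\delta_s/\ve\}$ the frame $\{\partial_j X\}$ and the metric have $C^1$-bounded entries. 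As $\Gamma_\Sigma$ and $A_\Sigma$ are $C^1$-bounded on $\{s\le 5\delta_s/\ve\}$ (by Lemma~\ref{lem:scherk-properties} and the exponential decay along the Scherk wings), the whole proposition reduces to showing $g_{\mathcal M}-g_\Sigma$, $\Gamma_{\mathcal M}-\Gamma_\Sigma$, $A_{\mathcal M}-A_\Sigma$ are $O(\delta_s+\delta_p+\ve)$ in $C^1$.

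On the graphical region these estimates are essentially a by-product of the proof of Proposition~\ref{prop error}, where $\mathcal M$ is the normal graph of $u=pf_\alpha\eta(\ve s)$ over the $\ve^{-1}$-dilated rotationally symmetric self-translator $S_\ve$, and which already gives, in $C^1$, $\tilde g_{ij}=g_{ij}+u_iu_j+O(\ve e^{-s})$ and $\tilde A_{ij}=A_{ij}+u_{ij}+O(\ve e^{-s})$, with $g,A$ those of $S_\ve$. Propositions~\ref{prop:boundA} and~\ref{prop:normA} give $|A_{S_\ve}|=O(\ve)$ and $|\Gamma^k_{S_\ve}|=O(\ve)$, and the parametrization was normalized by $p=\gamma_1(0)$ so that $g_{11}=p^2$, $g_{22}=\gamma_1(p\ve s)^2$; for $s\le 5\delta_s/\ve$ the argument stays in $[0,5p\delta_s]$, whence $g_{22}=p^2+O(\delta_s)$ with $s$-derivative $O(\ve)$, and $p^{-2}\delta_{ij}$ is precisely the flat model of $g_\Sigma$ on the Scherk wing in these coordinates. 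Since $u$ and its derivatives coincide with $pf_\alpha$ and its derivatives for $s\le 4\delta_s/\ve$ — where $\Sigma$'s wing is the graph of $f_\alpha$ — and are $O(e^{-s})\le\ve$ on the cut-off layer $4\delta_s/\ve\le s\le 5\delta_s/\ve$, all three differences are $O(\delta_s+\ve)$ in $C^1$ there.

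On the core, $\mathcal M$ is, up to the fixed vertical shift $d_\ve$, $\lambda_\ve(B_{\ve,\rho_\ve}\circ Z_\beta)(\bar\Sigma')$, i.e.\ $\Sigma(\alpha)$ subjected to the dislocations producing $\Sigma'$, the rigid rotation $Z_\beta$, the wrapping $B_{\ve,\rho_\ve}$, and the scaling $\lambda_\ve=1+O(\ve^2)$. The dislocations are built from the rigid motions $Z_{\beta_i}$ and the normal shifts by $\tau_i$ blended by the fixed cut-offs $\eta_{R_{rot}},\eta_{R_{tr}}$; outside the compact layer $R_{rot}<s<R_{tr}+1$ the modified wings are rigid-motion copies of the Scherk wings, so $g,\Gamma,A$ are unchanged, while inside that bounded layer the parametrization is perturbed by $O(|\beta_i|+|\tau_i|)=O(\delta_p)$ in $C^2$ (bounded $s$, $O(1)$ cut-off derivatives). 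The wrapping $B_{\ve,\rho_\ve}$ is, near any point of the core, $\ve$-close in $C^2$ to a rigid motion: expanding $\cos(\ve\rho_\ve^{-1}z),\sin(\ve\rho_\ve^{-1}z)$ at the local angular value $\theta_0=\ve\rho_\ve^{-1}z_0$ shows that, on bounded ranges of the local coordinates, $B_{\ve,\rho_\ve}$ agrees modulo a constant translation with the orthogonal map $x(\cos\theta_0,\sin\theta_0,0)+\zeta(-\sin\theta_0,\cos\theta_0,0)+y(0,0,1)$ up to $O(\ve)$ in every $C^k$, the constant depending only on the fixed $R_{rot},R_{tr}$; hence it distorts $g,\Gamma,A$ by $O(\ve)$. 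The scaling $\lambda_\ve$ and the shift $d_\ve$ contribute nothing at order $\ve$, and the two interpolation layers are bounded, so the same reasoning gives $O(\ve)$ there. Collecting, $\|L'\|_{C^1}=O(\delta_s+\delta_p+\ve)$ on all of $\{s\le 5\delta_s/\ve\}$.

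The part I expect to require most care is not any single estimate but the bookkeeping of coordinate identifications together with the passage from $C^0$ to $C^1$: one must track through the construction the constant dilation $p=\gamma_1(0)$ relating the Scherk coordinates $(s,z)$ on the core to the parametrization coordinates $(s,\theta)$ on the wings so that $\Delta_{\mathcal M}$ and $\Delta_\Sigma$ are genuinely written in the same chart before subtracting, and one must verify that the nominally $O(\delta_s)$ discrepancy $\gamma_1(p\ve s)^2-p^2$, and likewise the wrapping- and dislocation-induced perturbations, are controlled with one derivative — this is exactly where the extra factor $\ve$ in the $C^1$ bound comes from. All implicit constants are uniform in $\ve$ because $R_{rot},R_{tr},\delta_s,\delta_p,\delta_\alpha$ are fixed before letting $\ve\to 0$.
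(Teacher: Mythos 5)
Your proposal follows the same decomposition and the same coefficient-by-coefficient comparison as the paper's proof: on the core, bending and dislocations contribute $O(\ve)+O(\delta_p)$, and on the graphical region $R_{tr}+20\le s\le 5\delta_s/\ve$ you use the expansions $\tilde g_{ij}=g_{ij}+u_iu_j+O(\ve e^{-s})$, $\tilde A_{ij}=A_{ij}+u_{ij}+O(\ve e^{-s})$ and the expansion $\gamma_1(p\ve s)=p+O(\ve s)$, exactly as in (3.10)--(3.12) of the paper. Your extra detail on the core — Taylor-expanding $B_{\ve,\rho_\ve}$ about a rigid motion to justify the $O(\ve)$ bound — spells out what the paper compresses into one sentence, but it is the same argument.
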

\begin{proof} We again divide into several regions.
For $s\leq R_{tr}+20$, $\mathcal M$ is obtained from the Scherk surface by a bending, which introduces terms of order $\ve$ and dislocations of order $\delta_p$, so the estimate for $L'$ here follows.

When $ R_{tr} + 20 \leq s \leq 5\delta_s/\ve$, $\mathcal M$ can be described by the parametrization \eqref{eq:tildeX}. In this region, we express all geometric quantities of $\mathcal M$ and $\Sigma$ as functions of the coordinates $s$ and $\theta$.

We note  that $\Delta_{\mathcal M} - \Delta_{\Sigma}$ is a second order operator with coefficients whose $C^1$ norm can be estimated from the $C^2$ norm of  $g_{\mathcal M} - g_{\Sigma}$.
The ansatz is the graph of $u$ from \eqref{eq:def-u}, so to be consistent we take the Scherk surface parametrized by $(s,\theta) \mapsto (ps,p\theta,p f_\alpha(s,\theta)$.
Then \eqref{eq:tildeg} gives
\[
g_{ij, \Sigma} - g_{ij,\mathcal M}   =  p^2 \delta_{ij} + p^2 f_{\alpha,i} f_{\alpha,j} - (g_{ij}  + u_i  u_j)  + O(\ve e^{-s}) .
\]
We use \eqref{eq:metric}, $\| u \|_{C^3} \leq C$ , and the expansion $\gamma_1(p \ve s) = p + O(\ve s)$ to deduce
\begin{equation}
\label{eq:uniformg}
\| g_{ij, \Sigma}  - \bar g_{ij,\mathcal M} \|_{C^2} \leq C (\delta_s+ \ve) ,
\end{equation}
where the norm is computed over $R_{tr}+20\leq s\leq 5\delta_s/\ve$.
Because the metrics are uniformly equivalent, in order to bound $|A_{\mathcal M}|^2 - |A_{\Sigma}|^2$, it suffices to control $|A_{ij,\mathcal M} (s,\theta) - A_{ij, \Sigma}(s,\theta)|$. By \eqref{expansion Aij},
\[
A_{ij, \Sigma} - A_{ij,\mathcal M} = p f_{ij}  - ( u_{ij} + A_{ij} )
+O(\ve e^{-s}),
\]
where $O(\ve e^{-s})$ is in $C^1$ norm. The functions $u$ and $f$ are equal if $s\leq 4 \delta_s/\ve$ and we have $e^{-s} \leq C \ve$ otherwise. Moreover, $A_{ij} = O (\ve)$ when $s \leq 5\delta_s/\ve$ by \eqref{eq:A}.
Therefore,
\begin{align}
\label{estimate A}
\|A_{ij, \Sigma} - A_{ij,\mathcal M}\|_{C^1} \leq C \ve,
\end{align}
where the norm is over $R_{tr}+20\leq s\leq 5\delta_s/\ve$.

Finally,  the term $\ve \langle e_z ,\nabla_{g_{\mathcal M}} \rangle $ has coefficients of order $\ve$ in $C^1$ norm over $R_{tr}+20\leq s\leq 5\delta_s/\ve$.
\end{proof}

\subsection{Estimates of the quadratic terms}
Here, we prove Proposition~\ref{prop quadratic} for functions defined on the surface $\mathcal M$. We recall from  \eqref{estimate A} and Proposition~\ref{prop:normA}
 that each $|\nabla^i A|$  remains uniformly bounded on $\mathcal M$, for $i=0,1,2$.

Let $Q_1$ be defined by \eqref{expansion mean curvature} and assume $|uA|<1$.
For this computation it is convenient to work with coordinates that are normal at a certain point $x_0 \in \mathcal M$, which means
$$
g_{ij}(x_0) = \delta_{ij}  \quad \text{and}\quad
\partial_k g_{ij}(x_0) = 0.
$$
This implies
$$
\langle X_{ij} , e_k \rangle = 0,
$$
at $x_0$.
Moreover, by a further rotation $\langle X_{12} , \nu \rangle= 0$ at $x_0$ so that $A_1^{\ 2}(x_0)=0$. With these properties, following the computation in Section~\ref{sec:prelim}, we obtain at the point $x_0$:
\begin{align*}
\tilde g^{-1} =
\frac{1}{\det(\tilde g)}
\left[
\begin{matrix}
1 - 2 u A_2^{\ 2} + u^2 (A_2^{\rm 2})^2 + u_2^2 & -u_1 u_2
\\
-u_1 u_2 & 1-2 u A_1^{\ 1} + u^2 (A_1^{\rm 1})^2 + u_1^2
\end{matrix}
\right]
\end{align*}
with
\begin{align*}
\det(\tilde g) = |\tilde e_1 \times \tilde e_2|^2 =
(1-u H - u^2 G)^2 + u_1^2(1-u A_2^{\rm 2})^2 + u_2^2(1-u A_1^{\rm 1})^2 ,
\end{align*}
where $G$ is the Gaussian curvature.

We will use $Q$ to denote different functions of $u,u_i,u_{ij}, x$ with the properties:
\begin{align}
\label{propQ}
\left\{
\begin{aligned}
& \text{$Q$ is $C^\infty$ in  $u,u_i,u_{ij}$,}
\\
& Q(0,0,0,x)=0,
\quad
D_{u} Q(0,0,0,x)=0,
\\
& D_{u_i} Q(0,0,0,x)=0,
\quad
D_{u_{ij}} Q(0,0,0,x)=0,
\\
&\text{\vtop{\noindent \hsize 8cmsecond derivatives with respect to  $u,u_i,u_{ij}$ are bounded by universal functions of  $|A|$ and $|\nabla A|$ for $|u A|<1/2$,}}
\\
&\text{$Q$ is linear in $u_{ij}$.}
\end{aligned}
\right.
\end{align}
Then we can write
\begin{align*}
\tilde g^{-1} =
\frac{1}{\det(\tilde g)}
\left[
\begin{matrix}
1 - 2 u A_2^{\ 2}  & 0
\\
0 & 1-2 u A_1^{\ 1}
\end{matrix}
\right]+Q
\end{align*}
and
\begin{align*}
\det(\tilde g) = 1 - 2 u H + Q.
\end{align*}
Similarly,
\begin{align*}
\langle
\tilde X_{ij} ,
\tilde e_1 \times \tilde e_2
\rangle
= (1-uH) A_i^{\ j}+u_{ij} - u (A_i^{\ j})^2 + Q,
\end{align*}
therefore
\begin{align*}
\tilde H = H + u_{11}+u_{22} + ( (A_1^{\ 1})^2+(A_2^{\ 2})^2) u + Q_1,
\end{align*}
where $Q_1$ satisfies the properties \eqref{propQ}. Let $u$, $v$ be $C^{2,\alpha}$ functions on $\mathcal M$ with $|u A|<1/2$, $|v A|<1/2$.
To simplify notation, let $U(x) = (u(x),\nabla u(x), \nabla^2 u(x))$.
From the properties of $Q_1$ and Taylor's formula
\begin{align*}
& |Q_1( U(x), x)-Q_1( V(x), x)|
 \leq C ( |U(x)|+ |V(x)|) (  |U(x)-V(x)| ).
\end{align*}

To estimate the H\"older norm of $Q_1$, we note that the expression for $\tilde H - \ve \tilde \nu \cdot e_z$ is linear in the second derivative of $u$ and we have $C^1$ bounds on all the other terms.  We have $C^{\alpha}$ bounds on $\nabla_{g} u$ and $C^1$ bounds on everything else ($u$, $\nabla_{g} u$, $A$, and $\nabla_{g} A$).

%\section{comments}
%
%{\color{red}
%\begin{enumerate}
%\item I am a little concerned about \eqref{eq:order-g} and the fact that I might have missed some dependence on $\delta_s$.
%\item The last paragraph of the proof of Proposition \ref{prop:quadratic} is a little weak (this where I do the H\"older norm).
%\end{enumerate}
%}
%

\section{The Jacobi equation on Scherk surfaces}
\label{sec:jacobi-on-scherk}

Let $\Sigma = \Sigma(\alpha)$ be the singly periodic Scherk surface defined by \eqref{scherk}.
In this section, we want to solve the problem involving the Jacobi operator on $\Sigma$,
$$
\Delta \phi + |A|^2 \phi = h \quad\text{in } \Sigma ,
$$
where $\Delta$ is the Laplace-Beltrami operator and $A$ is the second fundamental form of $\Sigma$.

We let $s$ and $z$ denote the coordinates on the wings $W^i(\alpha)$, $i=1,\ldots,4$, described  in Lemma~\ref{lem:scherk-properties}.
In the rest of the section, we will work with right-hand sides  $h $  defined on $\Sigma $ and such that  $\| e^{\gamma s} h \|_{L^\infty(\Sigma)}<\infty $ for a fixed $ \gamma \in (0,1)$.

We will work with functions that are $2\pi$ periodic in $z$ and even with respect to $z$, that is, $\phi$ and $h$ satisfy
\begin{align}
\label{sym}
\phi(x,y,z) = \phi(x,y,z+2\pi ), \quad
\phi(x,y,z) = \phi(x,y,-z) \quad\forall (x,y,z)\in\Sigma ,
\end{align}
which is equivalent to symmetry with respect to the planes $z = k \pi$, $k\in \Z$.

We choose the unit normal vector to $\Sigma$ such that
\begin{align}
\label{orient scherk}
\text{$\nu\cdot e_y>0$ on wings 1 and 2, and $\nu\cdot e_y<0$ on wings 3 and 4.}
\end{align}
Because translating the surface $\Sigma$ leaves its mean curvature unchanged, the functions $\nu\cdot e$ are in the kernel of $\Delta + |A|^2$ for any fixed $e\in\R^3$.
Hence $\nu\cdot e_x $, $\nu\cdot e_y$,  and  $\nu\cdot e_z $  are in the kernel of the Jacobi operator. Of these functions,
 $\nu\cdot e_x $ and $\nu\cdot e_y $ satisfy the symmetries \eqref{sym}
and $\nu\cdot e_z $ does not because it is antisymmetric with respect to $z=0$.
We will write
\begin{align}
\label{def z1 z2}
z_1 = \nu\cdot e_x, \quad z_2 = \nu\cdot e_y.
\end{align}

The main results in this section are the following. First, we consider the problem of finding a bounded solutions $\phi$ of
\begin{align}
\label{eq23}
\left\{
\begin{aligned}
&\Delta \phi + |A|^2 \phi = \sum_{i=1}^2 c_i \eta_0 z_i + h \quad \text{in } \Sigma,
\\
&
\phi \text{ satisfies the symmetries  \eqref{sym}}
\end{aligned}
\right.
\end{align}
for which
\begin{align}
\label{orth2}
\int_{\Sigma} \phi \eta_0 z_i = 0,\quad i=1,2,
\end{align}
%for a given $h$ with $\| e^{\gamma s} h \|_{L^\infty(\Sigma)} < \infty $ and satisfying  \eqref{sym}.
where $\eta_0\in C^\infty(\Sigma)$ is a smooth function depending only on $x^2+y^2$ such that
\begin{align}
\label{cut0}
\text{$0\leq\eta_0\leq 1$, $\eta_0 = 1 $ on $x^2 + y^2 \leq R_0^2$, and $\eta_0=0$ on $x^2 + y^2 \geq ( R_0+1)^2$,}
\end{align}
where $R_0>1$ is fixed.

\begin{prop}
\label{prop linear1}
Let $0<\gamma<1$.
Let $h $ be defined in $\Sigma$, satisfy the symmetries \eqref{sym}, and
$\| e^{\gamma s} h \|_{L^\infty(\Sigma)} < \infty $.
Then there are unique $c_1,c_2\in\R$ and $\phi\in L^\infty(\Sigma)$ satisfying \eqref{eq23}
 and  \eqref{orth2}. Moreover
\begin{align}
\label{eq27}
|c_1|+|c_2|+\|\phi\|_{L^\infty(\Sigma)}\leq C \|e^{\gamma s} h\|_{L^\infty(\Sigma)}
\end{align}
and
\begin{align}
\label{decay gradient}
| \nabla \phi|\leq C ( \|e^{\gamma s} h\|_{L^\infty(\Sigma)} + \|\phi\|_{L^\infty(\Sigma)}) e^{-\gamma s} .
\end{align}
\end{prop}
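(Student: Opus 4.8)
The plan is to treat \eqref{eq23}--\eqref{orth2} as a Fredholm-type problem for the Jacobi operator $L:=\Delta+|A|^2$ on the non-compact, $z$-periodic surface $\Sigma$, in spaces tuned to exponential decay along the four wings. The one structural input on which everything rests is the \textbf{non-degeneracy of the Scherk surface}: the space of bounded Jacobi fields on $\Sigma(\alpha)$ obeying the symmetries \eqref{sym} is exactly $\mathrm{span}\{z_1,z_2\}$. (The obvious further low-growth Jacobi fields — the normal parts of the motions that change the angle $\alpha$ and that rotate $\Sigma(\alpha)$ about the $z$-axis — grow \emph{linearly} along the wings, so they are not bounded and do not enter.) Granting this, the remaining steps are: analyse $L$ on a single wing; deduce \eqref{decay gradient}; prove uniqueness; prove the a priori bound \eqref{eq27} by a compactness argument; and deduce existence. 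Throughout, the symmetries \eqref{sym} let one work on the fundamental domain $\{0\le z\le\pi\}\cap\Sigma$ with Neumann data on $\{z=0\}\cup\{z=\pi\}$; near infinity along the $i$-th wing this domain is the flat strip $\{s\ge s_0\}\times[0,\pi]$.

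\emph{Behaviour on the wings.} By Lemma~\ref{lem:scherk-properties} each wing, for $s$ large, is a normal graph over its asymptotic half-plane with graph function $O(e^{-s})$, so in the coordinates $(s,z)$ the metric is $\delta_{ij}+O(e^{-s})$ in $C^k$ and $|A|^2=O(e^{-s})$; hence $L$ is an $O(e^{-s})$ perturbation of $\partial_s^2+\partial_z^2$. Decomposing a bounded solution of $L\phi=h$ into its $z$-average $\bar\phi(s)$ and its mean-zero part $\phi^\perp$: the operator on $\phi^\perp$ has a spectral gap (the least Neumann eigenvalue of $-\partial_z^2$ on mean-zero functions on $[0,\pi]$ is $\ge 1$), so $\phi^\perp$ decays like $e^{-s}$; and $\bar\phi$ solves an ODE of the form $\bar\phi''=\bar h+O(e^{-s})$ with $\bar h=O(e^{-\gamma s})$, so, being bounded and $\gamma\in(0,1)$, it tends to a constant $\phi^{(i)}_\infty$ with $|\bar\phi-\phi^{(i)}_\infty|+|\bar\phi'|=O(e^{-\gamma s})$. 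This gives, uniformly in $i$ and in $\alpha\in[\delta_\alpha,\pi/2-\delta_\alpha]$,
\[
|\phi(s,z)-\phi^{(i)}_\infty|+|\nabla\phi(s,z)|\le C\bigl(\|\phi\|_{L^\infty(\Sigma)}+\|e^{\gamma s}h\|_{L^\infty(\Sigma)}\bigr)e^{-\gamma s},\qquad s\ge s_0 .
\]
Combined with interior Schauder estimates on the compact set $\{s\le s_0\}$ (where $e^{-\gamma s}$ is bounded below), this already yields \eqref{decay gradient} once $\|\phi\|_{L^\infty}$ is controlled. The same computation with $h\equiv 0$ past the core shows that the gradients of $z_1,z_2$ and of any bounded Jacobi field decay like $e^{-s}$ along the wings, a fact used in the integration-by-parts arguments below.

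\emph{Uniqueness and the a priori bound (the main work).} For uniqueness, the difference $\psi$ of two solutions, with parameter difference $(d_1,d_2)$, satisfies $L\psi=\sum_i d_i\eta_0 z_i$, is bounded, obeys \eqref{sym}, and $\int_\Sigma\psi\eta_0 z_j=0$. Pairing with $z_j$ over a truncation $\Sigma_R$ and letting $R\to\infty$, the boundary integrals vanish by the wing decay of $\nabla\psi$ and $\nabla z_j$, $\int_\Sigma z_j L\psi=\sum_i d_i\int_\Sigma\eta_0 z_i z_j$, and $\int_\Sigma\psi Lz_j=0$; hence $\sum_i d_i\int_\Sigma\eta_0 z_iz_j=0$. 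Since $\eta_0\ge 0$ is positive on an open set and $z_1,z_2$ are linearly independent Jacobi fields, unique continuation makes the Gram matrix $\bigl(\int_\Sigma\eta_0 z_iz_j\bigr)_{i,j}$ positive definite, so $d_1=d_2=0$; then $\psi$ is a bounded symmetric Jacobi field, i.e. $\psi=a_1z_1+a_2z_2$ by non-degeneracy, and $\int_\Sigma\psi\eta_0 z_j=0$ forces $a_1=a_2=0$. For \eqref{eq27} I would argue by contradiction: given solutions $\phi_n,c_{i,n},h_n$ with $\|\phi_n\|_{L^\infty}+|c_{1,n}|+|c_{2,n}|=1$ and $\|e^{\gamma s}h_n\|_{L^\infty}\to 0$, the wing estimate gives $|\phi_n-\phi^{(i)}_{n,\infty}|+|\nabla\phi_n|\le Ce^{-\gamma s}$ on $\{s\ge s_0\}$ uniformly in $n$; along a subsequence $c_{i,n}\to c_{i,\infty}$, $\phi^{(i)}_{n,\infty}\to\phi^{(i)}_\infty$, and $\phi_n\to\phi_\infty$ in $C^2_{\mathrm{loc}}$, which the uniform wing decay upgrades to uniform convergence on all of $\Sigma$. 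Then $\phi_\infty$ is bounded, obeys \eqref{sym}, $L\phi_\infty=\sum_i c_{i,\infty}\eta_0 z_i$, and $\int_\Sigma\phi_\infty\eta_0 z_i=0$, so $\phi_\infty\equiv 0$ and $c_{i,\infty}=0$ by uniqueness. Choosing $s_0$ with $Ce^{-\gamma s_0}<\tfrac14$ and using $\sup_{\{s>s_0\}}|\phi_n|\le\max_i|\phi^{(i)}_{n,\infty}|+\tfrac14\|\phi_n\|_{L^\infty}$, one gets $1=\|\phi_n\|_{L^\infty}+\sum_i|c_{i,n}|\le\sup_{\{s\le s_0\}}|\phi_n|+\max_i|\phi^{(i)}_{n,\infty}|+\tfrac14+\sum_i|c_{i,n}|\to\tfrac14$, a contradiction. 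Hence \eqref{eq27}, and then \eqref{decay gradient}, hold.

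\emph{Existence and the main obstacle.} With \eqref{eq27} in hand, the map $(\phi,c_1,c_2)\mapsto L\phi-\sum_i c_i\eta_0 z_i$, from bounded symmetric functions with $\int_\Sigma\phi\eta_0 z_i=0$ to the weighted target, is bounded below, hence has closed range and finite-dimensional cokernel; a standard cokernel computation — the same integration by parts as in the uniqueness proof, killing the Lagrange multipliers again via the positive-definite Gram matrix — shows that any cokernel element is a bounded symmetric Jacobi field decaying on every wing, hence zero by non-degeneracy (the $z_i$ do not decay on any wing). Thus the operator is onto and \eqref{eq23}--\eqref{orth2} is solvable; alternatively one may solve on the truncations $\Sigma_R$ and pass to the limit using the uniform bound. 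I expect the genuine obstacle to be the non-degeneracy statement itself: excluding bounded symmetric Jacobi fields other than $z_1,z_2$ is not formal and requires the fine structure of $\Sigma(\alpha)$ (a conjugate-surface/flux argument, or a mode-by-mode ODE analysis carried through the core); everything else is by now routine for this type of gluing construction.
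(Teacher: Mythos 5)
Your proposal is correct and follows essentially the same route as the paper: you rely on the Montiel--Ros non-degeneracy (Proposition~\ref{prop nondeg}), establish exponential wing decay by splitting into $z$-average and mean-zero part (this is the content of Lemma~\ref{lemma grad decays}), prove the a priori bound by a compactness/contradiction argument (Lemmas~\ref{lemma-apriori-sigmar}, \ref{lemma apriori sigma}), control $c_1,c_2$ via pairing with $z_j$ and a positive-definite Gram matrix, and obtain existence by solving on truncations $\Sigma_R$ and passing to the limit (Lemma~\ref{lemma linear existence truncated}). The only differences are organizational — you bundle the norm on $(\phi,c_1,c_2)$ into one contradiction argument rather than estimating $\|\phi\|$ and $|c_j|$ in separate steps, and you mention a Fredholm alternative to the truncation — but the mathematical content is the same.
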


If $\phi$ is a bounded solution of \eqref{eq23}, by \eqref{decay gradient} $\phi$ has a limit on each wing, that is, $L_i = \lim_{s\to\infty} \phi(s,z)$ on all wings $i=1,\ldots,4$.
Moreover, these limits define linear functionals of $h$ and we have the estimate $|L_i|\leq C \|e^{\gamma s}h\|_{L^\infty}$.
%
%
%Let us summarize the previous construction as follows.
%\begin{lemma}
%Given $h$ satisfying \eqref{sym} with $\|e^{\gamma s} h\|_{L^\infty}<\infty$ there is a unique bounded solution $\phi$ of \eqref{eq23} with
%such that
%$$
%\lim \phi = 0\quad \text{on wings 2 and 3}.
%$$
%Moreover
%$$
%\|\phi\|_{L^\infty} + |c_1|+|c_2| \leq C \|e^{\gamma s} h\|_{L^\infty} .
%$$
%\end{lemma}
%
%
%
%
%
For later consideration, it is desirable to  find a solution to \eqref{eq23} with limit equal to zero on all wings. To achieve this, the right hand side has to satisfy four restrictions, or equivalently has to be projected onto a space of codimension 4. We do this by considering the main terms introduced by the dislocations.
%Let is use the notation
%$$
%\J \phi=\Delta \phi + |A|^2 \phi
%$$
So we consider now the problem
\begin{align}
\label{50}
\left\{
\begin{aligned}
&\Delta \phi + |A|^2 \phi =  h + \sum_{i=1}^2 ( \beta_i w_i' + \tau_i w_i )
\quad \text{in } \Sigma,
\\
&
\phi \text{ satisfies  \eqref{sym}} ,
\end{aligned}
\right.
\end{align}
where the functions $w_i$, $w_i'$ are defined in \eqref{def w}, \eqref{def w prime}.

\begin{prop}
\label{prop linear scherk 4}
Let $0<\gamma<1$ and  $h$ be a function satisfying  $\| e^{\gamma s} h \|_{L^\infty(\Sigma)} < \infty $ and the symmetries \eqref{sym}.
Then if $R_{rot}$ and $R_{tr}$, which are the parameters in the construction associated to the dislocations, are fixed large enough,
there  exist $\beta_i,\tau_i$, $i=1,2$ and $\phi$ a bounded solution of \eqref{50} such that $\phi$ has limit equal to zero on all wings. Moreover $\phi$,  $\beta_i,\tau_i$ depend linearly on $h$ and
\begin{align}
\label{est linear scherk2}
\|e^{\gamma s} \phi\|_{L^\infty} + \sum_{i=1}^2 ( |\beta_i|+|\tau_i|)\leq C \| e^{\gamma s} h \|_{L^\infty}.
\end{align}
\end{prop}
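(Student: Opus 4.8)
The plan is to use Proposition~\ref{prop linear1} as a black box and reduce \eqref{50} to a finite-dimensional linear system. First apply Proposition~\ref{prop linear1} to $h$: this gives a bounded $\phi_0$ solving $\Delta\phi_0+|A|^2\phi_0=h+c_1(h)\eta_0z_1+c_2(h)\eta_0z_2$ with $\int_\Sigma\phi_0\eta_0z_i=0$ and, by \eqref{decay gradient}, with limits $L^0_j$ on the four wings $j=1,\dots,4$, all bounded by $C\|e^{\gamma s}h\|_{L^\infty}$. Apply it also, once and for all, to each of $w_1',w_2'$ and $w_1,w_2$, obtaining fixed functions $\psi_1',\psi_2',\psi_1,\psi_2$ together with their constants and wing limits. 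Two structural remarks. Since translations and rotations preserve the identically vanishing mean curvature of $\Sigma$, the functions of \eqref{def w}, \eqref{def w prime} are $\Delta+|A|^2$ applied to a cutoff times a Jacobi field: for $w_j$ this Jacobi field is the normal component of an infinitesimal translation, which is \emph{bounded}, so $\Delta\psi+|A|^2\psi=w_j$ already has a bounded solution and the uniqueness in Proposition~\ref{prop linear1} forces the constants for $w_j$ to vanish, i.e. $\Delta\psi_j+|A|^2\psi_j=w_j$ exactly; for $w_j'$ this Jacobi field is $\nu\cdot(-y,x,0)$ (see \eqref{formula w prime}), which \emph{grows linearly} on the wings, and the constants $c_i(w_j')$ need not vanish.

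Look for a solution of \eqref{50} of the form
\[
\phi=\phi_0+\beta_1\psi_1'+\beta_2\psi_2'+\tau_1\psi_1+\tau_2\psi_2+\lambda_1z_1+\lambda_2z_2 .
\]
Because $z_1,z_2$ lie in the kernel of $\Delta+|A|^2$, applying that operator to $\phi$ produces the right-hand side of \eqref{50} plus $\sum_{i=1}^2\big(c_i(h)+\beta_1c_i(w_1')+\beta_2c_i(w_2')\big)\eta_0z_i$. Demanding that these two coefficients vanish is a $2\times2$ system for $(\beta_1,\beta_2)$; once it is solved, $\phi$ solves \eqref{50} on the nose. It remains to annihilate the four wing limits of $\phi$: writing $\ell_{i,j}$ for the limit of $z_i$ on the $j$-th wing, this is the $4\times4$ system
\[
L^0_j+\beta_1L_j(\psi_1')+\beta_2L_j(\psi_2')+\tau_1L_j(\psi_1)+\tau_2L_j(\psi_2)+\lambda_1\ell_{1,j}+\lambda_2\ell_{2,j}=0,\qquad j=1,\dots,4,
\]
in $(\tau_1,\tau_2,\lambda_1,\lambda_2)$, whose right-hand side is $O(\|e^{\gamma s}h\|_{L^\infty})$. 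If both systems are uniquely solvable with inverses bounded uniformly in $\alpha\in[\delta_{\alpha},\pi/2-\delta_{\alpha}]$, then $\beta_i,\tau_i,\lambda_i=O(\|e^{\gamma s}h\|_{L^\infty})$, the resulting $\phi$ is bounded with all wing limits $0$, and \eqref{decay gradient} together with $|\nabla z_i|\le C|A|$ (exponentially small on the wings) gives $|\nabla\phi|\le C\|e^{\gamma s}h\|_{L^\infty}e^{-\gamma s}$, hence $\|e^{\gamma s}\phi\|_{L^\infty}\le C\|e^{\gamma s}h\|_{L^\infty}$; with the bound on the parameters this is \eqref{est linear scherk2}, and linearity in $h$ is automatic since every step is linear.

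The heart of the argument is the nonsingularity of the two systems, and this is where $R_{rot},R_{tr}$ are taken large, so that the supports of $w_j,w_j'$ lie far out on the dislocated wings, where the Jacobi fields agree with their asymptotic models and the relevant fluxes can be read off. For the $(\beta_1,\beta_2)$-system one evaluates $c_i(w_j')$ from $\int_\Sigma w_j'z_i$ (to which $(c_1,c_2)$ is related by the fixed invertible Gram matrix of $z_1,z_2$): integrating by parts and using $\Delta z_i+|A|^2z_i=0$, the interior contributions cancel, and the sole surviving boundary term at infinity lives on the $j$-th dislocated wing, where it equals a fixed positive multiple of the asymptotic slope in $s$ of $\nu\cdot(-y,x,0)$ there times the limit of $z_i$ there; from the asymptotic half-planes of Lemma~\ref{lem:scherk-properties} these slopes are $\pm1$ and the limits of $z_1,z_2$ are $\mp\sin\alpha$ and $\pm\cos\alpha$, so the $2\times2$ determinant is a nonzero multiple of $\sin\alpha\cos\alpha$. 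For the $4\times4$ system, the bounded solution of $\Delta\psi_j+|A|^2\psi_j=w_j$ is a cutoff translation Jacobi field modulo $\mathrm{span}(z_1,z_2)$, so the wing-limit vectors of $\psi_1$ and $\psi_2$ are $(1,0,0,0)$ and $(0,0,0,1)$ modulo the span of those of $z_1,z_2$, which is $\mathrm{span}\{(-1,1,1,-1),(1,1,-1,-1)\}$; and $(1,0,0,0),(0,0,0,1),(-1,1,1,-1),(1,1,-1,-1)$ is a basis of $\R^4$, so the system is nonsingular.

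The main obstacle is exactly this last step: one must identify the cokernel of $\Delta+|A|^2$ in the bounded-to-weighted setting implicit in Proposition~\ref{prop linear1} as the two-dimensional space dual to $\mathrm{span}(z_1,z_2)$, and one must control the asymptotics of the bounded and the linearly growing Jacobi fields of $\Sigma(\alpha)$ along its wings precisely enough to compute the relevant fluxes and limits — which is precisely why the dislocations of Section~\ref{sec:construction of M} were built out of translations and rotations of wings $1$ and $4$.
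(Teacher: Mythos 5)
Your proposal is correct and reaches the same conclusion as the paper's proof, but organizes the finite-dimensional reduction differently. The paper constructs an auxiliary function $\hat\phi$ by subtracting $d_1 z_1 + d_2 z_2$ (killing the limits on wings $2,3$) and then cutoff kernel elements $L_1\tilde\eta_1\tilde z_1 + L_4\tilde\eta_4\tilde z_4$ (killing the limits on wings $1,4$); it then tests the resulting equation against $z_1,\dots,z_4$ to obtain a coupled affine relation $0 = b + M_1(c_1,c_2,L_1,L_4)^T + M_2(\beta_1,\beta_2,\tau_1,\tau_2)^T$, and solves it by proving that both $4\times4$ matrices $M_1,M_2$ are invertible for $R_c,R_{\mathrm{rot}},R_{\mathrm{tr}}$ large. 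Your route instead builds the solution directly as $\phi_0 + \sum\beta_i\psi_i' + \sum\tau_i\psi_i + \sum\lambda_i z_i$ and exploits a clean decoupling: because the translation dislocations $w_j$ have the explicit bounded preimage $\eta_{\mathrm{tr},j}$ (whose support is disjoint from $\eta_0$ for $R_{\mathrm{tr}}$ large, so it satisfies the orthogonality of Proposition~\ref{prop linear1}), the projection constants $c_i(w_j)$ vanish exactly — in fact $\int_\Sigma w_j z_i = 0$, a sharper statement than the paper's $O(e^{-R_{\mathrm{tr}}})$ — so the coefficients of $\eta_0 z_i$ depend only on $\beta_1,\beta_2$. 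This yields a $2\times2$ system for $(\beta_1,\beta_2)$ followed by a $4\times4$ system for $(\tau_1,\tau_2,\lambda_1,\lambda_2)$ whose columns are the transparent wing-limit vectors $(1,0,0,0)$, $(0,0,0,1)$, $\sin\alpha\,(-1,1,1,-1)$, $\cos\alpha\,(1,1,-1,-1)$. Both determinants are $\sim\sin\alpha\cos\alpha$, uniformly nonzero on $[\delta_\alpha,\pi/2-\delta_\alpha]$, so the conclusion follows as stated. The flux integrals underlying your $2\times2$ system are exactly those computed in the paper ($\int w_j' z_i$), so the hard content is the same; the gain of your organization is that it avoids the intermediate construction of $\hat\phi$ and replaces the analysis of the $M_1,M_2$ block structure with two smaller, more explicit systems. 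One small point worth making explicit is that even if $\eta_{\mathrm{tr},j}$ failed the orthogonality constraint one could subtract a kernel correction $a_1 z_1 + a_2 z_2$; this leaves $c_i(w_j)=0$ and merely shifts the last two columns of your $4\times4$ matrix inside the span of the $z_1,z_2$ columns, so the nonsingularity is unaffected.
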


Using this proposition, we fix the parameters $R_{tr}$, $R_{rot}$ of the construction of the initial approximation $\mathcal M$. The following non-degeneracy property of the Jacobi operator is crucial in the proof of the above results and was proved by Montiel and Ros \cite{montiel-ros}.
\begin{prop}
\label{prop nondeg}
Any bounded solution  $\phi$  of
$$
\Delta \phi + |A|^2\phi =0 \quad\text{in } \Sigma
$$
is a linear combination of
$\nu\cdot e_x $, $\nu\cdot e_y $  and  $\nu\cdot e_z $.
\end{prop}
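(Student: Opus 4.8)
The plan is to reduce the Jacobi equation on $\Sigma=\Sigma(\alpha)$ to the equation satisfied by first‑order spherical harmonics under the Gauss map, and then to use the non‑degeneracy of the Scherk surface established by Montiel and Ros \cite{montiel-ros}. The starting point is the standard conformal reduction of the Jacobi operator of a minimal surface: writing $N$ for the Gauss map of $\Sigma$ and $g$ for its induced metric, minimality gives $|A|^2=-2K$ and $N^\ast g_{S^2}=\tfrac12|A|^2 g$, and since in dimension two the Laplace--Beltrami operator is conformally covariant one has $\Delta_g=\tfrac12|A|^2\,\Delta_{N^\ast g_{S^2}}$ on the set where $|A|^2\neq 0$. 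Hence there $\Delta_g\phi+|A|^2\phi=0$ is equivalent to $\Delta_{N^\ast g_{S^2}}\phi+2\phi=0$. Next I would invoke the well‑known conformal structure of the Scherk towers: the quotient $\hat\Sigma:=\Sigma/\langle 2\pi e_z\rangle$ is conformally $\overline{\mathbb C}$ with four points removed, and the Gauss map extends to a biholomorphism of $\overline{\mathbb C}$ onto $S^2$. In particular $N$ has no branch points on $\Sigma$, so $|A|^2>0$ everywhere, $N^\ast g_{S^2}$ is a genuine metric, and $N$ is a locally conformal (infinite‑sheeted) covering $\Sigma\to S^2\setminus\{q_1,\dots,q_4\}$, where $q_1,\dots,q_4$ are the unit normals of the four asymptotic half‑planes.

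In the periodic case this closes the argument cleanly. A $2\pi$‑periodic bounded Jacobi field descends to $\hat\Sigma$, and then $v:=\phi\circ N^{-1}$ is a bounded solution of $\Delta_0 v+2v=0$ on $S^2$ minus four points, where $\Delta_0$ is the Laplacian of the round metric. Because $v$ is bounded near each puncture, the punctures are removable for this elliptic equation, so $v$ extends to a solution of $\Delta_{S^2}v+2v=0$ on all of $S^2$; the kernel of $\Delta_{S^2}+2$ is exactly the space of linear functions $\langle a,\cdot\rangle|_{S^2}$, $a\in\R^3$, so $\phi=\langle a,N\rangle=a\cdot\nu$. Since $N$ is non‑constant its image spans $\R^3$, hence $\nu\cdot e_x$, $\nu\cdot e_y$, $\nu\cdot e_z$ are linearly independent and the assertion holds for periodic $\phi$.

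The main obstacle is precisely that a bounded Jacobi field need not be $2\pi$‑periodic a priori, so one must exclude bounded solutions with non‑trivial monodromy along the period. I would handle this as follows. First, the space $\mathcal J_b$ of bounded Jacobi fields is finite‑dimensional: on each wing the operator is an exponentially small perturbation of the flat Laplacian, so by separation of variables in the periodic coordinate a bounded solution has an exponentially attained limit along each wing (as already recorded after Proposition~\ref{prop linear1}), and $\mathcal J_b$ is an extension of a subspace of $\R^4$ (the wing limits) by the $L^2$‑kernel of the Jacobi operator on $\hat\Sigma$, both finite‑dimensional. The translation $T$ by $2\pi e_z$ is an $L^\infty$‑isometry acting on $\mathcal J_b$, so after complexification $\mathcal J_b$ splits into $T$‑eigenspaces with all eigenvalues on the unit circle, say $\lambda=e^{2\pi i c}$; for such an eigenvector $w:=e^{-icz}\phi$ — using that $z$ is a globally defined harmonic function on the minimal surface $\Sigma$ — descends to $\hat\Sigma$ and satisfies there a Schr\"odinger‑type equation that differs from the Jacobi equation only by a first‑order term $2ic\langle\nabla z,\nabla w\rangle$ and the bounded zeroth‑order correction $-c^2|\nabla z|^2 w$. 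The point is that for $c\neq 0$ this twisted operator on the four‑punctured sphere has no nonzero bounded solution, which is exactly where one relies on the Montiel--Ros analysis of Schr\"odinger operators associated to the meromorphic Gauss map \cite{montiel-ros}; only the periodic fields survive, and the previous paragraph applies, giving $\dim\mathcal J_b=3$ with $\nu\cdot e_x,\nu\cdot e_y,\nu\cdot e_z$ as a basis. If one prefers to avoid this detour, the statement can simply be quoted from \cite{montiel-ros}, the content of the present sketch being the verification that $\Sigma(\alpha)$ meets the hypotheses of their non‑degeneracy theorem.
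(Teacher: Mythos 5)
The paper offers no proof for this proposition --- it is a direct citation to Montiel and Ros \cite{montiel-ros} --- so there is no argument in the paper to compare with step by step. Your sketch of the periodic case is correct and is precisely the verification that the Montiel--Ros non-degeneracy theorem applies to the Scherk quotient: the Gauss map of $\hat\Sigma=\Sigma/\langle 2\pi e_z\rangle$ is an unbranched biholomorphism onto $\overline{\C}\simeq S^2$, so $|A|^2>0$, $N^\ast g_{S^2}=\tfrac12|A|^2 g$ is a genuine conformal metric, a bounded $2\pi$-periodic Jacobi field descends to a bounded solution of $\Delta_{S^2}v+2v=0$ on the four-punctured sphere, the punctures are removable, and the kernel of $\Delta_{S^2}+2$ is the three-dimensional space of first-order spherical harmonics. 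This handles the periodic case, which is in fact the only case the paper uses: in the proofs of Lemmas~\ref{lemma-apriori-sigmar} and \ref{lemma apriori sigma}, the limiting field already satisfies the symmetries \eqref{sym}, hence is $2\pi$-periodic in $z$.

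The gap is in your reduction of general bounded fields to periodic ones. After factoring out a monodromy $e^{2\pi i c}$, the function $w=e^{-icz}\phi$ is periodic but satisfies
$$
\Delta w + \bigl(|A|^2 - c^2|\nabla z|^2\bigr) w + 2ic\,\langle\nabla z,\nabla w\rangle = 0 ,
$$
an equation with a genuine first-order drift and a zeroth-order term that is no longer of the form $|d\psi|^2$ for a holomorphic map $\psi$. The Montiel--Ros machinery rests precisely on the conformal covariance of Schr\"odinger operators $\Delta+q$ when $q$ is the pullback of the round metric by a meromorphic map; a twisted operator with a drift term falls outside their framework, and the claim that it has no bounded kernel for $c\neq0$ is asserted, not derived from anything cited. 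So either the monodromy step needs its own argument (for instance via the exponential decay of $\phi$ on every wing when $c\neq0$ and a separate spectral or flux argument on $\hat\Sigma$), or the proposition should be stated for $z$-periodic solutions only, which is all the paper requires.
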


The rest of the section is devoted to prove Propositions~\ref{prop linear1} and \ref{prop linear scherk 4}.
We start by considering the problem
\begin{align}
\label{prob sigmaR}
\left\{
\begin{aligned}
&\Delta \phi + |A|^2 \phi = h \quad \text{in } \Sigma_R
\\
&
\phi = 0 \quad\text{on } \partial \Sigma_R ,
\quad \phi \text{ satisfies  \eqref{sym}}
\end{aligned}
\right.
\end{align}
where $R>0$ is large and
\begin{align}
\label{def SigmaR}
\text{$\Sigma_R $ is the union of the core of $\Sigma$ and $\cup_{i=1}^4 W^i(\alpha) \cap \{ s\leq R \}$.}
\end{align}
In the sequel, we work with $R>>R_0+1$.

\begin{lemma}
\label{lemma-apriori-sigmar}
Let $0<\gamma<1$.
Let $h $ be defined in $\Sigma_R$, satisfy the symmetries \eqref{sym}, and
$\| e^{\gamma s} h \|_{L^\infty(\Sigma)} < \infty$.
Then there are $R_1$, $C$ such that for all $R\geq R_1$ and any solution $\phi$ of \eqref{prob sigmaR} such that
\begin{align}
\label{orth}
\int_{\Sigma_R} \phi \eta_0 z_i = 0\quad i=1,2,
\end{align}
we have
\begin{align}
\label{apriori}
\|\phi\|_{L^\infty(\Sigma_R)} \leq C \|e^{\gamma s} h\|_{L^\infty(\Sigma_R)} .
\end{align}
\end{lemma}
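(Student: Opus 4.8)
The plan is to establish the a priori bound \eqref{apriori} by a contradiction/compactness argument, which is the standard device for linear estimates on non-compact domains with a weighted norm. Suppose the estimate fails: then there are sequences $R_n\to\infty$ (or $R_n$ bounded below but arbitrary), functions $h_n$ with $\|e^{\gamma s}h_n\|_{L^\infty(\Sigma_{R_n})}\to 0$, and solutions $\phi_n$ of \eqref{prob sigmaR} on $\Sigma_{R_n}$ satisfying the orthogonality \eqref{orth} and normalized by $\|\phi_n\|_{L^\infty(\Sigma_{R_n})}=1$. Pick points $x_n\in\Sigma_{R_n}$ where $|\phi_n(x_n)|$ is close to $1$. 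The first dichotomy is whether $s(x_n)$ stays bounded or goes to infinity along a subsequence.

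In the case $s(x_n)$ bounded, elliptic estimates (interior Schauder plus the boundary condition, noting $x_n$ stays away from $\partial\Sigma_{R_n}$ since $R_n$ is large) give local $C^{2,\alpha}$ bounds, so $\phi_n\to\phi$ in $C^2_{loc}(\Sigma)$ with $|\phi(x_\infty)|=1$, $\phi$ bounded, satisfying the symmetries \eqref{sym}, and $\Delta\phi+|A|^2\phi=0$ in all of $\Sigma$ (the right-hand side vanishes in the limit because $\|e^{\gamma s}h_n\|_\infty\to 0$). By the Montiel--Ros nondegeneracy Proposition~\ref{prop nondeg}, $\phi$ is a linear combination $a\,z_1+b\,z_2+c\,(\nu\cdot e_z)$; the symmetry \eqref{sym} kills the $\nu\cdot e_z$ term since it is antisymmetric in $z$, so $\phi=a z_1+b z_2$. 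Passing \eqref{orth} to the limit — this requires a uniform decay estimate so that the integrals converge, which I would get from the decay of $z_1,z_2$ on the wings and a crude barrier showing $|\phi_n|\lesssim e^{-\gamma s}$ away from the core (or simply noting $\eta_0$ is compactly supported, so no decay of $\phi_n$ is actually needed for this integral, only $C^2_{loc}$ convergence) — forces $\int_\Sigma(a z_1+b z_2)\eta_0 z_i=0$. The Gram matrix $\big(\int_\Sigma \eta_0 z_i z_j\big)_{i,j}$ is positive definite (the $z_i$ are linearly independent even after multiplication by $\eta_0$, since if $\eta_0 z_i\equiv 0$ off a compact set it would vanish by unique continuation), hence $a=b=0$ and $\phi\equiv 0$, contradicting $|\phi(x_\infty)|=1$.

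In the case $s(x_n)\to\infty$, I translate: on the wing containing $x_n$ the surface, rescaled/translated so $x_n$ moves to a fixed location, converges to a half-plane (by Lemma~\ref{lem:scherk-properties}, the wings are exponentially asymptotically flat graphs), $|A|^2\to 0$ exponentially, and $\phi_n$ converges in $C^2_{loc}$ to a bounded harmonic function $\phi$ on $\R^2$ (or a periodic strip, using the $2\pi$-periodicity in $z$) with $\|\phi\|_\infty\le 1$ and $|\phi(x_\infty)|=1$. A bounded harmonic function on the plane is constant, so $\phi\equiv\pm 1$. But now I run a maximum-principle/barrier argument on the wing region $\{s\ge \text{const}\}$: using that $|A|^2\le Ce^{-2s}$ there and that $e^{\gamma s}h_n\to 0$, the function $Be^{-\gamma s}+o(1)$ is a supersolution of $\Delta\cdot+|A|^2\cdot$ for suitable $B$ once $s$ is large (since $\Delta e^{-\gamma s}\approx \gamma^2 e^{-\gamma s}>|A|^2 e^{-\gamma s}$), and comparing on the annular region between $\{s=s_0\}$ and $\{s=R_n\}$ (where $\phi_n=0$) yields $|\phi_n|\le C e^{-\gamma s}$ for $s\ge s_0$, which forces $|\phi_n(x_n)|\to 0$ — contradiction. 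Effectively this barrier argument shows the "mass" of $\phi_n$ cannot escape to the ends, reducing everything to the bounded-$s$ case above.

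The main obstacle is the barrier construction on the wings: one must choose the constants $s_0$ and $B$ uniformly in $n$ and $R_n$, control the error terms $|A|^2 - 0$ and the right-hand side $h_n$ simultaneously against the exponential weight $e^{-\gamma s}$, and handle the matching at $s=s_0$ using the already-established local bound $|\phi_n|\le 1$. A subtlety is that $\gamma<1$ must be compatible with the decay rate of $|A|^2$ (which is $e^{-2s}$, comfortably faster than $e^{-\gamma s}$, so $\gamma\in(0,1)$ is more than enough). I also need the a priori linear dependence of the limits $L_i$ on $h$ and their bound $|L_i|\le C\|e^{\gamma s}h\|_\infty$, but that follows a posteriori from \eqref{apriori} and \eqref{decay gradient}. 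Once the lemma is in hand, Proposition~\ref{prop linear1} follows by combining the a priori estimate with a Fredholm/Galerkin argument on $\Sigma_R$ (the orthogonality \eqref{orth} removes exactly the two-dimensional cokernel spanned by $\eta_0 z_1,\eta_0 z_2$) and then letting $R\to\infty$, extracting a limit via the uniform bound and local compactness.
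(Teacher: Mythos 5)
Your overall strategy — contradiction, compactness, Montiel--Ros nondegeneracy for the core, a barrier to propagate smallness along the wings — is the same as the paper's. However, the barrier step contains a genuine sign error that breaks the argument as written.

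You propose to use $Be^{-\gamma s}$ (plus a small correction) as a \emph{supersolution} of $\Delta+|A|^2$ on the wings, and you justify this by noting $\Delta e^{-\gamma s}\approx\gamma^2 e^{-\gamma s}>|A|^2 e^{-\gamma s}$. But this inequality shows $e^{-\gamma s}$ is a strict \emph{subsolution}: $(\Delta+|A|^2)e^{-\gamma s}\approx\gamma^2 e^{-\gamma s}+O(e^{-(2+\gamma)s})>0$, since both $\Delta e^{-\gamma s}$ and $|A|^2 e^{-\gamma s}$ are positive. There is no positive supersolution of $\Delta+|A|^2$ on a wing that decays like $e^{-\gamma s}$: the zero Fourier mode of the limiting equation $\phi''+|A|^2\phi=0$ has bounded, non-decaying solutions, which is exactly why the paper's Proposition~\ref{prop linear1} only asserts exponential decay for $\nabla\phi$ (via the separation-of-variables argument of Lemma~\ref{lemma grad decays}), not for $\phi$ itself. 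The pointwise decay $|\phi_n|\le Ce^{-\gamma s}$ you claim is thus not available and not what the lemma asserts.

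The correct barrier, used in the paper, is $\psi=1-e^{-\gamma s}$, which satisfies $(\Delta+|A|^2)\psi\le -c\,e^{-\gamma s}<0$ for $s\ge s_0$ large (the $|A|^2\le Ce^{-2s}$ term is negligible against $\gamma^2e^{-\gamma s}$ since $\gamma<1<2$), so $\psi$ is a genuine positive supersolution vanishing at $s=0$. Comparing $\pm\phi_n$ with $M_n\psi$ where $M_n\sim\max\bigl(\|\phi_n\|_{L^\infty(s\le s_0)},\|e^{\gamma s}h_n\|_{L^\infty}\bigr)$, using $\phi_n=0$ at $s=R_n$, gives $\|\phi_n\|_{L^\infty}\le CM_n$. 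But this only produces a contradiction \emph{after} you already know $\|\phi_n\|_{L^\infty(s\le s_0)}\to0$ from the Montiel--Ros/orthogonality step; the two cases of your ``dichotomy'' are therefore not parallel and independent but must be done sequentially, exactly as the paper does: first local decay via nondegeneracy and orthogonality, then propagation via the bounded supersolution $1-e^{-\gamma s}$. The blow-up-to-a-half-plane alternative you sketch in the unbounded case is also unnecessary once this is observed, and without a valid barrier it does not by itself furnish a contradiction, since a bounded harmonic limit on a half-plane need not be ruled out without the boundary information.
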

\begin{proof}
%We can apply the Fredholm alternative to show that \eqref{prob sigmaR} has a unique solution.
%To do this it is sufficient to verify that the only solutions of \eqref{prob sigmaR} with $h=0$ are the trivial ones. The only bounded solutions to $\Delta\phi+|A|^2\phi=0$ in $\Sigma$ are linear combinations of $\nu\cdot e_x$, $\nu\cdot e_y$, $\nu\cdot e_z$. Of this, none satisfy the symmetries \eqref{sym} and the boundary condition on $\partial \Sigma_R$.
We proceed by contradiction and assume that for any positive integer $n$, there are $R_n$, $\phi_n$, $h_n$ such that $R_n\to\infty$ as $n\to\infty$, \eqref{prob sigmaR}, \eqref{orth} hold and
\begin{align}
\label{hyp cont}
\|\phi_n\|_{L^\infty(\Sigma_{R_n})} =1,\quad
\|e^{\gamma s} h_n\|_{L^\infty(\Sigma_{R_n})}  \to 0 \quad\text{as }n\to\infty.
\end{align}
Let us show first that $\phi_n\to0$ uniformly on compact sets of $\Sigma_{R_n}$. Otherwise, up to a subsequence and using standard local estimates for elliptic equations, $\phi_n\to\phi$ uniformly on compact sets of $\Sigma$, where $\phi $ is bounded, not identically zero, and satisfies
$$
\Delta\phi + |A|^2 \phi = 0 \quad\text{in } \Sigma.
$$
By Proposition~\ref{prop nondeg}, $\phi$ is a linear combination of
$\nu\cdot e_x $, $\nu\cdot e_y $  and  $\nu\cdot e_z $. But $\nu\cdot e_z $ is not present in this linear combination by the imposed symmetry \eqref{sym}, so
$$
\phi = c_1 \nu\cdot e_x +  c_2 \nu\cdot e_y
$$
for some $c_1$, $c_2$.
But passing to the limit in \eqref{orth}, we deduce that $\phi$ satisfies \eqref{orth}. This implies that $c_1=c_2=0$.

We note that  $\psi= 1-e^{-\gamma s}$ satisfies
$$
\Delta \psi + |A|^2 \psi \leq - C e^{-\gamma s}
$$
for some $C>0$ and $s\geq s_0$ where $s_0$ is large enough. Using these barriers on each wing, we obtain that $\|\phi_n\|_{L^\infty(\Sigma_{R_n})}\to0$ as $n\to\infty$, which contradicts \eqref{hyp cont}.
\end{proof}

With almost the same argument we can prove the next result.
\begin{lemma}
\label{lemma apriori sigma}
Let $0<\gamma<1$.
Let $h $ be defined in $\Sigma$, satisfy the symmetries \eqref{sym}, and
$\| e^{\gamma s} h \|_{L^\infty(\Sigma)} < \infty $.
Then there is a constant $C$ such that for any bounded solution $\phi$ of
\begin{align}
\label{prob sigma theta}
\left\{
\begin{aligned}
&\Delta \phi + |A|^2 \phi = h \quad \text{in } \Sigma
\\
&
\phi \text{ satisfies  \eqref{sym}}
\end{aligned}
\right.
\end{align}
which also satisfies \eqref{orth2}, we have
\begin{align*}
%\label{apriori}
\|\phi\|_{L^\infty(\Sigma)} \leq C \|e^{\gamma s} h\|_{L^\infty(\Sigma)} .
\end{align*}
\end{lemma}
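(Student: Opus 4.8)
The plan is to run the contradiction argument from the proof of Lemma~\ref{lemma-apriori-sigmar}, the only genuine novelty being that the wings of $\Sigma$ are now infinite, so the role played there by the boundary condition at $s=R$ must be taken over by a control of the limits of $\phi$ along the wings. I would assume the estimate fails: then there are $\phi_n$, $h_n$ with $\phi_n$ a bounded solution of \eqref{prob sigma theta} satisfying the symmetries \eqref{sym} and the orthogonality \eqref{orth2}, such that $\|\phi_n\|_{L^\infty(\Sigma)}=1$ while $\|e^{\gamma s}h_n\|_{L^\infty(\Sigma)}\to0$. Exactly as in Lemma~\ref{lemma-apriori-sigmar}, one first checks that $\phi_n\to0$ uniformly on compact subsets of $\Sigma$: otherwise a subsequence converges in $C^2_{loc}(\Sigma)$ to a bounded, nonzero $\phi$ solving $\Delta\phi+|A|^2\phi=0$ and satisfying \eqref{sym}, so by Proposition~\ref{prop nondeg} together with the symmetry (which discards $\nu\cdot e_z$) one has $\phi=c_1z_1+c_2z_2$ with $z_i$ as in \eqref{def z1 z2}; passing to the limit in \eqref{orth2} and using the nondegeneracy of $(\int_\Sigma\eta_0 z_iz_j)_{i,j}$ forces $c_1=c_2=0$, a contradiction.

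The new ingredient is the estimate of the far-field limits. Fix a large constant $s_0$. On a wing $W^i$, in the coordinates $(s,z)$, the surface is an $O(e^{-2s})$-perturbation of a flat half-cylinder of circumference $2\pi$ (recall $|A|^2$ and the Christoffel symbols are $O(e^{-2s})$ there). The nonzero Fourier modes of $\phi_n$ in $z$ decay exponentially in $s$, so $\phi_n(s,\cdot)$ has a $z$-independent limit along $W^i$ as soon as the zero mode $\phi_n^{(0)}(s)=\frac{1}{2\pi}\int_0^{2\pi}\phi_n(s,z)\,dz$ does; and $\phi_n^{(0)}$ is bounded and satisfies an ODE $(\phi_n^{(0)})''=h_n^{(0)}+O(e^{-2s})$, where the $O(e^{-2s})$ collects $|A|^2\phi_n$ and the metric corrections (here $\|\phi_n\|_{L^\infty(\Sigma)}=1$ is used). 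Since $\phi_n^{(0)}$ is bounded with integrable second derivative, it has a limit, $(\phi_n^{(0)})'(s)=-\int_s^{\infty}(\phi_n^{(0)})''=O(e^{-\gamma s})$, and integrating once more from $s_0$ gives
\[
|L_i^{(n)}|\ \le\ \sup_{\{s(x)\le s_0+1\}}|\phi_n|\ +\ C\,\|e^{\gamma s}h_n\|_{L^\infty(\Sigma)}\ +\ Ce^{-2s_0},\qquad L_i^{(n)}:=\lim_{s\to\infty}\phi_n ,
\]
with $C$ independent of $s_0$ and of $n$.

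The last step would be to propagate smallness to all of $\Sigma$. On $\{s\ge s_0\}\cap W^i$ the function $\phi_n-L_i^{(n)}$ tends to $0$ as $s\to\infty$, is $O(\sup_{\{s\le s_0+1\}}|\phi_n|+|L_i^{(n)}|)$ at $s=s_0$, and solves $\Delta(\phi_n-L_i^{(n)})+|A|^2(\phi_n-L_i^{(n)})=h_n-|A|^2L_i^{(n)}$, whose right-hand side is at most $C(\|e^{\gamma s}h_n\|_{L^\infty}+|L_i^{(n)}|)e^{-\gamma s}$ since $|A|^2=O(e^{-2s})$. Comparing $\pm(\phi_n-L_i^{(n)})$ with $K_n\psi$, where $\psi=1-e^{-\gamma s}$ is the barrier from the proof of Lemma~\ref{lemma-apriori-sigmar} ($\Delta\psi+|A|^2\psi\le-Ce^{-\gamma s}$ for $s\ge s_0$) and $K_n=C(\|e^{\gamma s}h_n\|_{L^\infty}+|L_i^{(n)}|+\sup_{\{s\le s_0+1\}}|\phi_n|)$, by the maximum principle for $\Delta+|A|^2$ on the exhaustion $\{s_0\le s\le R\}$, $R\to\infty$ (using $\phi_n-L_i^{(n)}\to0$ at infinity), one gets $|\phi_n-L_i^{(n)}|\le K_n$ on $\{s\ge s_0\}\cap W^i$. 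Combined with the first step on $\{s\le s_0\}$ and with the bound on $|L_i^{(n)}|$, this gives $\limsup_n\|\phi_n\|_{L^\infty(\Sigma)}\le Ce^{-2s_0}$; choosing $s_0$ so large that $Ce^{-2s_0}<1$ contradicts $\|\phi_n\|_{L^\infty(\Sigma)}=1$. I expect the main obstacle to be precisely this second step: in the truncated Lemma~\ref{lemma-apriori-sigmar} the condition $\phi=0$ at $s=R$ trivializes the behaviour near the cut, whereas here the decay of the constants $L_i^{(n)}$ has to be extracted from the decaying right-hand side by treating the wings as perturbed flat half-cylinders, and the order of quantifiers (fix $s_0$ large before letting $n\to\infty$) has to be respected.
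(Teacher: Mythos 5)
Your proof is correct, but it takes a genuinely different (and considerably longer) route than the paper's, so a comparison is useful. The paper disposes of Lemma~\ref{lemma apriori sigma} in one line: run the same contradiction argument as in Lemma~\ref{lemma-apriori-sigmar}, but in the final barrier step replace $\psi=1-e^{-\gamma s}$ by $\psi+\delta Z$, where $Z$ is a \emph{linearly growing} element of the kernel of $\Delta+|A|^2$ (e.g.\ a linear combination of the functions $z_3,z_4$ introduced in Section~\ref{sec:jacobi-on-scherk}). Since $L(\psi+\delta Z)=L\psi\le -ce^{-\gamma s}$ and $\psi+\delta Z\to\infty$ along each wing, while $\phi_n$ is merely bounded, the comparison function eventually dominates $|\phi_n|$; the maximum principle then applies on a \emph{bounded} region, and letting $\delta\to0$ yields $|\phi_n|\le\mu_n\psi\le\mu_n\to0$ directly, with no need to pin down the far-field limits first. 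Your argument instead estimates the limits $L_i^{(n)}$ explicitly: you reduce to the zero Fourier mode, use that $(\phi_n^{(0)})''=h_n^{(0)}+O(e^{-2s})$ together with boundedness to get $(\phi_n^{(0)})'(s)=O(e^{-\gamma s})$, hence $|L_i^{(n)}|\lesssim\sup_{\{s\le s_0+1\}}|\phi_n|+\|e^{\gamma s}h_n\|_{L^\infty}+e^{-cs_0}$, and only then run a barrier comparison for $\phi_n-L_i^{(n)}$ (which now genuinely tends to $0$ at infinity, so the exhaustion/maximum-principle step is sound). Both routes are valid; the paper's buys economy (no Fourier/ODE analysis on the wings and no two-parameter limit), while yours is more elementary in that it does not require knowing beforehand that a linearly growing Jacobi field exists. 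Two small technical remarks on your write-up: the remainder in the zero-mode ODE needs uniform control of first and second derivatives of $\phi_n$, which you should justify (interior $W^{2,p}$ or Schauder estimates using $\|e^{\gamma s}h_n\|_{L^\infty}<\infty$ and $\|\phi_n\|_{L^\infty}=1$; alternatively invoke Lemma~\ref{lemma grad decays}, whose conclusion $|\nabla\phi_n|\le Ce^{-\gamma s}$ already gives the existence and an estimate of $L_i^{(n)}$ with much less work); and in the exhaustion step it is cleanest to compare with $K_n(1+\sigma)\psi$, pass $R\to\infty$ for fixed $\sigma>0$, then $\sigma\to0$, rather than inserting an $R$-dependent additive constant (which spoils the sign of $L$ of the barrier for large $s$).
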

\begin{proof}
The proof changes only in the last step, when we use maximum principle to prove that $\phi \leq C \psi$. We change slightly the barriers by
$$
\psi+ \delta Z,
$$
where $Z$ is an element in the kernel of the Jacobi operator that grows linearly,
and then take $\delta\to 0$.
\end{proof}

\begin{lemma}
\label{lemma grad decays}
Let $0<\gamma<1$.
Let $h $ be defined in $\Sigma$, satisfy the symmetries \eqref{sym}, and
$\| e^{\gamma s} h \|_{L^\infty(\Sigma)} < \infty $.
Suppose $\phi$ is a bounded solution of \eqref{prob sigma theta}.
Then
$$
| \nabla \phi|\leq C ( \|e^{\gamma s} h\|_{L^\infty(\Sigma)} + \|\phi\|_{L^\infty(\Sigma)}) e^{-\gamma s} .
$$

\end{lemma}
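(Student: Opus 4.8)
\emph{Proof plan.} Set $M_0:=\|e^{\gamma s}h\|_{L^\infty(\Sigma)}+\|\phi\|_{L^\infty(\Sigma)}$. I would first record a uniform gradient bound: by the interior $W^{2,p}$ estimate (or Schauder, if $h$ is H\"older) on unit geodesic balls, using $\|h\|_{L^\infty(B_1(x))}\le M_0 e^{-\gamma s(x)}$ and the uniformly bounded geometry of $\Sigma$ from Lemma~\ref{lem:scherk-properties}, one gets $\|\phi\|_{C^{1,\beta}(B_1(x))}+\|\phi\|_{W^{2,p}(B_1(x))}\le CM_0$ for every $x\in\Sigma$. On the core, and on any region where $s$ stays bounded, $e^{-\gamma s}$ is bounded below, so the claimed estimate is already contained in this bound; it remains only to treat each wing $W^i$ for large $s$.

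On a wing I would use the coordinates $(s,z)\in(R,\infty)\times S^1$ of Lemma~\ref{lem:scherk-properties}. Since $W^i$ is a normal graph over a flat half-cylinder of a function decaying like $e^{-s}$ in every $C^k$, the metric coefficients are $\delta_{ij}+O(e^{-2s})$ and $|A|^2=O(e^{-2s})$ with all derivatives; hence $\Delta+|A|^2=\partial_s^2+\partial_z^2+\mathcal E$ with $\mathcal E$ a second order operator whose coefficients are $O(e^{-2s})$ in $C^k$. The equation then reads $\partial_s^2\phi+\partial_z^2\phi=\tilde F$ with $\tilde F:=h-|A|^2\phi-\mathcal E\phi$, and using the uniform $W^{2,p}$ bound (to estimate $\mathcal E\phi$) together with $2>\gamma$ one checks $\|\tilde F\|_{L^p((s,s+1)\times S^1)}\le CM_0e^{-\gamma s}$. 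I would expand in $z$, $\phi=\sum_{k\ge0}a_k(s)\cos(kz)$, $\tilde F=\sum_{k\ge0}f_k(s)\cos(kz)$ (only cosines, by \eqref{sym}).

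The higher modes are controlled by the spectral gap. For $k\ge1$ each $a_k$ is bounded and solves $a_k''-k^2a_k=f_k$ on $(R,\infty)$; the bounded inverse of $\partial_s^2-k^2$ has kernel bounded by $\frac{1}{2k}e^{-k|s-t|}$, and summing in $k$ by Cauchy--Schwarz (using $\sum k^{-2}<\infty$ and the Fourier-coefficient bounds for $\tilde F$) yields $|\phi^\perp|+|\nabla\phi^\perp|\le CM_0e^{-\gamma s}$ for $\phi^\perp:=\phi-a_0$; here $\gamma<1$ lies below the gap value $1$, which is why $e^{-\gamma s}$, not $e^{-s}$, is the relevant rate. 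For the zero mode, $a_0$ is bounded and $a_0''=f_0$ with $\|a_0''\|_{L^1(s,s+1)}\le\|f_0\|_{L^p(s,s+1)}\le CM_0e^{-\gamma s}$, which is summable; hence $a_0'(s)$ converges as $s\to\infty$, and its limit must be $0$, since otherwise $a_0(s)$ would grow linearly and contradict boundedness. Therefore $a_0'(s)=-\int_s^\infty a_0''$, so $|a_0'(s)|\le CM_0e^{-\gamma s}$. Since in these coordinates $|\nabla\phi|\le C(|a_0'(s)|+|\nabla\phi^\perp|)$, combining the two estimates gives $|\nabla\phi|\le CM_0e^{-\gamma s}$ on the wing, which together with the first paragraph proves the lemma. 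As a byproduct $a_0$ has a limit $L_i$ with $|L_i|\le\|\phi\|_\infty$ and $|\phi-L_i|\le CM_0e^{-\gamma s}$ on $W^i$, which is the fact quoted in the paragraph after Proposition~\ref{prop linear1}.

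The main obstacle is the zero-mode analysis. A plain maximum-principle barrier is useless here: $e^{-\gamma s}$ is \emph{not} a supersolution of $\Delta+|A|^2$ on a wing, and no positive supersolution on $\{s\ge s_0\}$ decaying to $0$ exists, so the decay of $a_0'$ has to be extracted from the ODE $a_0''=f_0$ by integrating from $+\infty$. This is legitimate precisely because $f_0$ is integrable, and that integrability uses in an essential way that the Jacobi operator converges to the flat Laplacian \emph{exponentially} fast (Lemma~\ref{lem:scherk-properties}); with only polynomial convergence one would lose it. The remaining technical point is keeping the mode-coupling term $\mathcal E\phi$ under control with just the $W^{2,p}$ bound on $\phi$ (since $h$ is only $L^\infty$), but this is routine bookkeeping.
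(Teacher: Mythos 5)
Your proposal is correct and follows essentially the same route as the paper's proof: write the equation on a wing as a perturbation of the flat Laplacian on a half-strip with exponentially decaying coefficients, expand in a cosine Fourier series, handle the modes $k\geq 1$ by the bounded inverse of $\partial_s^2-k^2$ together with Cauchy--Schwarz summation, integrate the zero-mode ODE from $s=+\infty$ (the paper's formula $\phi_0(s)=b_0+\int_s^\infty(t-s)g_0(t)\,dt$ is exactly your $a_0'(s)=-\int_s^\infty a_0''$), and finish with local elliptic estimates applied to $\phi-b_0$. The only cosmetic difference is that the paper works with $L^2$/$H^2$ bounds on unit strips where you use $W^{2,p}$; both serve the same purpose of controlling the second-order perturbation term.
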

\begin{proof}
Changing variables,
we rewrite the equation on a fixed wing as
\begin{align}
\label{eq02}
\Delta \phi = a_0(s,z) \phi + a_1(s,z) \nabla \phi + a_2(s,z) D^2 \phi + h
\quad\text{in } S
\end{align}
with boundary conditions
$$
\frac{\partial\phi}{\partial z}=0\quad \text{on }
(s_0,\infty) \times \{0,\pi\},
$$
where $S$ is the strip $(s_0,\infty) \times (0,\pi)$.
We write the variables in $S$ as $(s,z)$, $s>s_0$, $z\in (0,\pi)$
and in \eqref{eq02}, $\Delta = \partial_{ss}+\partial_{zz}$.
The functions $a_0$, $a_1$, $a_2$ are smooth and have the decay
$$
|a_i(s,z)|\leq C e^{-s}.
$$
For $T>s_0$, we have
$$
\| \phi \|_{L^2((T,T+5)\times (0,\pi))} \leq C \|\phi\|_{L^\infty(S)} .
$$
Since the coefficients of $a_i$ are small as $s\to\infty$, we have from standard estimates
$$
\| \phi \|_{H^2((T,T+1)\times (0,\pi))} \leq C
( \|\phi\|_{L^\infty(S)} + \|e^{\gamma s} h\|_{L^\infty(S)} ) .
$$
Hence,
\begin{align}
\label{est g}
\|a_0 \phi + a_1 \nabla \phi + a_2 D^2 \phi + h \|_{L^2((T,\infty)\times (0,\pi))} \leq
C (\|\phi\|_{L^\infty(S)} + \|e^{\gamma s} h\|_{L^\infty(S)}) e^{-\gamma T} .
\end{align}
Let  $g=a_0 \phi + a_1 \nabla \phi + a_2 D^2 \phi + h $ and write
$$
\phi(s,z) = \sum_{n=0}^\infty \phi_n(s) \cos(nz)
,\quad
g(s,z) = \sum_{n=0}^\infty g_n(s) \cos(nz) ,
$$
where, for $n\geq 0$,
$
\phi_n(s) = \frac 2\pi\int_0^\pi \phi(s,z) \cos(nz)\, d z$ and $
g_n(s) = \frac 2\pi\int_0^\pi g(s,z) \cos(nz)\, d z.
$
We can write
\begin{align}
\label{phi0}
\phi_0(s) = b_0 + \int_s^\infty(t-s) g_0(t)\, d t,
\end{align}
where $b_0 = \lim_{s\to\infty} \phi_0(s)$. The claim is that
\begin{align}
\label{decay}
\|\phi - b_0\|_{L^2((T,\infty)\times (0,\pi))}\leq
C(\|\phi\|_{L^\infty(S)} + \|e^{\gamma s} h\|_{L^\infty(S)}) e^{-\gamma T}.
\end{align}

To prove this, let
$
\tilde \phi(s,z) =\sum_{n=1}^\infty \phi_n(s) \cos(nz).
$ We claim that, for $T$ large
\begin{align}
\label{exp decay phi tilde}
\|\tilde \phi\|_{L^2((T,\infty)\times (0,\pi) )}\leq C ( \|\phi\|_{L^\infty(S)} + \|e^{\gamma s} h\|_{L^\infty}) e^{-\gamma T} .
\end{align}
Indeed, we have
$$
\phi_n''- n^2 \phi_n = g_n \quad\text{for } s>s_0.
$$
Note that $\phi_n(s)$ is bounded as $s\to\infty$, so that $\phi_n(s)$ must  have the form  (for $n\geq 1$)
$$
\phi_n(s) = d_n e^{-n (s-s_0)} + \phi_{0,n}(s) ,
$$
where
$$
d_n = \phi_n(s_0), \quad
\phi_{0,n}(s) = -
 e^{- n d}\int_{s_0}^s e^{2nt} \int_t^\infty g_n(\tau) e^{-n \tau} \, d \tau \, d t .
$$
Let $\gamma<a<1$ be fixed. By the Cauchy-Schwarz inequality, for $n\geq  1$, we have
\begin{align}
\label{est phi 0n}
|\phi_{0,n}(s)|
\leq
\frac{1}{(4 n(n-a))^{1/2}}
e^{-as}
(
\int_{s_0}^s
e^{2at} \int_t^\infty |g_n(\tau)|^2\,d\tau  \, d t
)^{1/2} .
\end{align}
%
%\begin{align*}
%|\phi_{0,n}(s)|
%&\leq
%e^{-ns}
%\int_{s_0}^s
%e^{2nt}
%(\int_t^\infty e^{-2n\tau}\,d\tau)^{1/2}
%(\int_t^\infty |g_n(\tau)|^2\,d\tau)^{1/2} \, d t
%\\
%&
%\leq
%\frac{1}{(2n)^{1/2}}
%e^{-ns}
%\int_{s_0}^s
%e^{(n-a)t} e^{at} (\int_t^\infty |g_n(\tau)|^2\,d\tau)^{1/2} \, d t
%\\
%&
%\leq
%\frac{1}{(2 n)^{1/2}}
%e^{-ns}
%(
%\int_{s_0}^s
%e^{2(n-a)t} \, d t )^{1/2}
%(
%\int_{s_0}^s
%e^{2at} \int_t^\infty |g_n(\tau)|^2\,d\tau  \, d t
%)^{1/2}
%\\
%&
%\leq
%\frac{1}{(4 n(n-a))^{1/2}}
%e^{-as}
%(
%\int_{s_0}^s
%e^{2at} \int_t^\infty |g_n(\tau)|^2\,d\tau  \, d t
%)^{1/2}
%\end{align*}
%
Therefore
$$
\|\tilde \phi\|_{L^2((T,\infty)\times (0,\pi) )}^2
\leq C e^{-2T} \sum_{n=1}^\infty d_n^2 +  C\sum_{n=1}^\infty
\int_T^\infty
|\phi_{0,n}(s)|^2 \, d s ,
$$
and using \eqref{est g}, \eqref{est phi 0n}, we deduce \eqref{exp decay phi tilde}.
%\begin{align*}
%\sum_{n=1}^\infty
%\int_T^\infty
%|\phi_{0,n}(s)|^2 \, d s
%&\leq
%\sum_{n=1}^\infty
%\frac{1}{4 n(n-a)}
%\int_T^\infty
%e^{-2 a s}
%\int_{s_0}^s
%e^{2at} \int_t^\infty |g_n(\tau)|^2\,d\tau  \, d t
%\, d s
%\\
%&\leq
%C
%\int_T^\infty
%e^{-2 a s}
%\int_{s_0}^s
%e^{2at}
%\int_t^\infty \sum_{n=1}^\infty|g_n(\tau)|^2\,d\tau  \, d t
%\, d s
%\\
%&\leq
%C
%\int_T^\infty
%e^{-2 a s}
%\int_{s_0}^s
%e^{2at}
%\int_t^\infty \int_0^\pi |g(\tau,z)|^2 \, d z \,d\tau  \, d t
%\, d s
%\\
%&\leq
%C
%\int_T^\infty
%e^{-2 a s}
%\int_{s_0}^s
%e^{2at}
%e^{-2 \gamma t}
%\, d t
%\, d s
%\\
%&\leq
%C e^{-2\gamma T}.
%\end{align*}
The estimate above and a similar one for the integral in \eqref{phi0}
give \eqref{decay}.

Note that $\bar\phi = \phi-b_0$ satisfies
$$
\Delta \bar\phi = a_0(s,z) \bar \phi + a_1(s,z) \nabla \bar \phi + a_2(s,z) D^2 \bar \phi + h + a_0(s,z) b_0
\quad\text{in } S .
$$
From standard elliptic estimates, we get
\begin{align*}
\|\bar \phi + |\nabla \bar \phi| \|_{L^\infty((T+1,T+2)\times (0,\pi)) }
& \leq C (
\|\bar \phi \|_{L^2((T,T+3)\times (0,\pi) )}
\\
& \qquad
+ \| h + a_0 b_0 \|_{L^\infty( (T,T+3)\times (0,\pi) )}
)
\\
&\leq
C(\|\phi\|_{L^\infty(S)} + \|h\|_{L^\infty(S)}) e^{-\gamma T}. \qedhere
\end{align*}

\end{proof}

To prove existence, let $\Sigma_R$ be defined by \eqref{def SigmaR} and consider the truncated problem of finding $\phi$ and $c_1,c_2$ such that
\begin{align}
\label{eq32}
\left\{
\begin{aligned}
&\Delta \phi + |A|^2 \phi = \sum_{i=1}^2 c_i \eta_0 z_i + h \quad \text{in } \Sigma_R ,
\\
& \phi =0 \quad \text{on }\partial\Sigma_R  , \\
& \phi \text{ satisfies  \eqref{sym}} .
\end{aligned}
\right.
\end{align}

%\comment{It looks like the text in blue is proving the same thing as the text in red?}
%\comment{\eqref{eq25} and \eqref{eq28} are different because one estimate is for solutions in $\Sigma$ and the other for solution on the truncated $\Sigma$, called $\Sigma_R$}
\begin{lemma}
\label{lemma linear existence truncated}
Let $0<\gamma<1$.
Let $h $ be defined in $\Sigma_R$, satisfy the symmetries \eqref{sym}, and $\| e^{\gamma s} h \|_{L^\infty(\Sigma_R)} < \infty $.
Then there are unique $c_1,c_2\in\R$ and $\phi\in L^\infty(\Sigma_R)$ satisfying \eqref{eq32}
 and  \eqref{orth2}. Moreover,
\begin{align}
\label{eq31}
|c_1|+|c_2|+\|\phi\|_{L^\infty(\Sigma_R)}\leq C \|e^{\gamma s} h\|_{L^\infty(\Sigma_R)} ,
\end{align}
with $C$ independent of $R$.
\end{lemma}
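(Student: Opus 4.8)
The estimate \eqref{eq31} is the crux of the statement; once it is in hand, uniqueness follows by applying it to the difference of two solutions with the same datum (right‑hand side $h=0$), and existence follows by a Fredholm argument. So the plan has two parts: first prove the a priori bound \eqref{eq31}, with $C$ independent of $R$, by a contradiction/compactness argument parallel to the proof of Lemma~\ref{lemma-apriori-sigmar}; then deduce solvability.

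\emph{The a priori bound.} Suppose it fails: there are $R_n\to\infty$ and solutions $(\phi_n,c_{n,1},c_{n,2})$ of \eqref{eq32}--\eqref{orth2} with data $h_n$ such that $\|\phi_n\|_{L^\infty(\Sigma_{R_n})}+|c_{n,1}|+|c_{n,2}|=1$ while $\|e^{\gamma s}h_n\|_{L^\infty}\to0$. The first step is to control the constants $c_{n,i}$: pairing the equation in \eqref{eq32} with $\eta_0 z_j$ and integrating by parts — the boundary terms vanish because $\eta_0 z_j$ is supported in $\{x^2+y^2\le (R_0+1)^2\}$, far inside $\Sigma_{R_n}$, and $\phi_n=0$ on $\partial\Sigma_{R_n}$ — gives
\[
\sum_{i=1}^2 c_{n,i}\int_{\Sigma_{R_n}}\eta_0^2\, z_i z_j
=\int_{\Sigma_{R_n}}\phi_n\,(\Delta+|A|^2)(\eta_0 z_j)\;-\;\int_{\Sigma_{R_n}} h_n\,\eta_0 z_j .
\]
Since $(\Delta+|A|^2)z_j=0$, the function $(\Delta+|A|^2)(\eta_0 z_j)=2\nabla\eta_0\cdot\nabla z_j+z_j\,\Delta\eta_0$ is bounded and compactly supported, and the Gram matrix $\bigl(\int\eta_0^2\, z_i z_j\bigr)_{i,j}$ is invertible — $z_1=\nu\cdot e_x$ and $z_2=\nu\cdot e_y$ are linearly independent on $\{\eta_0>0\}$, otherwise $\Sigma$ would be flat there — so $|c_{n,1}|+|c_{n,2}|\le C\bigl(\|\phi_n\|_{L^\infty}+\|e^{\gamma s}h_n\|_{L^\infty}\bigr)$. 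With the normalization this forces $\liminf_n\|\phi_n\|_{L^\infty}>0$.

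\emph{The limit vanishes.} By interior elliptic estimates, along a subsequence $\phi_n\to\phi$ in $C^2_{\rm loc}(\Sigma)$ and $c_{n,i}\to c_i$, where $\phi$ is bounded, satisfies the symmetries \eqref{sym} and $(\Delta+|A|^2)\phi=\sum_i c_i\,\eta_0 z_i$ in $\Sigma$ with $\int_\Sigma\phi\,\eta_0 z_i=0$. The right‑hand side is compactly supported, so Lemma~\ref{lemma grad decays} applies and yields $|\nabla\phi|\le Ce^{-\gamma s}$; hence $\phi$ has a finite limit on each wing. Writing Green's identity for $\phi$ and $z_j$ on $\Sigma_\rho$ (recall $(\Delta+|A|^2)z_j=0$) and letting $\rho\to\infty$: the interior term converges to $\sum_i c_i\int_\Sigma\eta_0 z_i z_j$, while the boundary term over $\bigcup_k\{s=\rho\}\cap W^k(\alpha)$ tends to $0$, because on each wing the curve $\{s=\rho\}$ has bounded length, $|\nabla\phi|\le Ce^{-\gamma\rho}$, and $|\nabla z_j|=|\nabla(\nu\cdot e_j)|\le|A|$ decays exponentially there (Lemma~\ref{lem:scherk-properties}). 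Thus $\sum_i c_i\int_\Sigma\eta_0 z_i z_j=0$ for $j=1,2$, and invertibility of the Gram matrix gives $c_1=c_2=0$. Then $(\Delta+|A|^2)\phi=0$, so by Proposition~\ref{prop nondeg} and the symmetry \eqref{sym} (which excludes $\nu\cdot e_z$) we have $\phi=a_1 z_1+a_2 z_2$; the orthogonality relations and, once more, invertibility of the Gram matrix force $a_1=a_2=0$, i.e. $\phi\equiv0$. Finally, one reaches a contradiction exactly as at the end of the proof of Lemma~\ref{lemma-apriori-sigmar}: fixing $s_0$ large so that $\eta_0\equiv0$ on $\{s\ge s_0\}$ and the barrier $\psi=1-e^{-\gamma s}$ satisfies $(\Delta+|A|^2)\psi\le -ce^{-\gamma s}$ there, comparing $\phi_n$ on each wing (where $(\Delta+|A|^2)\phi_n=h_n$ for $s\ge s_0$) with $\pm\bigl(\mu_n+K\|e^{\gamma s}h_n\|_{L^\infty}\bigr)\psi$, with $\mu_n=\sup_{\{s\le s_0\}}|\phi_n|\to0$, yields $\|\phi_n\|_{L^\infty(\Sigma_{R_n})}\to0$, against $\liminf_n\|\phi_n\|_{L^\infty}>0$. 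This proves \eqref{eq31} for $R$ large, with $C$ independent of $R$; taking $h=0$ gives uniqueness.

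\emph{Existence.} On the fixed compact surface $\Sigma_R$ consider the bounded linear map $\mathcal A:\ C^{2,\alpha}_{0,{\rm sym}}(\overline{\Sigma_R})\times\R^2\to C^{0,\alpha}_{\rm sym}(\overline{\Sigma_R})\times\R^2$, $\mathcal A(\phi,c)=\Bigl((\Delta+|A|^2)\phi-\sum_i c_i\eta_0 z_i,\ \bigl(\textstyle\int_{\Sigma_R}\phi\,\eta_0 z_j\bigr)_{j=1,2}\Bigr)$, where the subscripts denote the closed subspaces of functions obeying \eqref{sym} and, for the first factor, vanishing on $\partial\Sigma_R$. Write $\mathcal A=\mathcal A_0+\mathcal K$ with $\mathcal A_0(\phi,c)=\bigl(\Delta\phi-\sum_i c_i\eta_0 z_i,(\int\phi\,\eta_0 z_j)_j\bigr)$ and $\mathcal K(\phi,c)=(|A|^2\phi,0)$. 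Then $\mathcal K$ is compact since $C^{2,\alpha}(\overline{\Sigma_R})\hookrightarrow C^{0,\alpha}(\overline{\Sigma_R})$ is compact, and $\mathcal A_0$ is an isomorphism: given $(h,b)$, Schauder solvability of the Dirichlet problem gives $\phi=\Delta^{-1}\bigl(h+\sum_i c_i\eta_0 z_i\bigr)$, reducing the second equation to the $2\times2$ system $\sum_i c_i\,\langle\Delta^{-1}(\eta_0 z_i),\eta_0 z_j\rangle_{L^2}=b_j-\langle\Delta^{-1}h,\eta_0 z_j\rangle_{L^2}$, whose matrix is negative definite because $\langle\Delta^{-1}g,g\rangle_{L^2}=-\int|\nabla\Delta^{-1}g|^2<0$ for $g=\sum_i c_i\eta_0 z_i\ne0$. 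Hence $\mathcal A$ is Fredholm of index $0$; the injectivity established above (the case $h=0$ of \eqref{eq31}) makes it an isomorphism, giving the unique solution of \eqref{eq32}--\eqref{orth2}, with $\phi\in L^\infty(\Sigma_R)$ by elliptic regularity, and the bound \eqref{eq31} with $C$ independent of $R$ is precisely the a priori estimate just proved. \textbf{The main difficulty} is the limiting step: ruling out the constants $c_i$ requires both the gradient decay of Lemma~\ref{lemma grad decays} and a careful control of the boundary terms in Green's identity using the exponential decay of $|A|$ on the Scherk wings; more structurally, it is the contradiction/compactness scheme, rather than a direct Fredholm estimate on $\Sigma_R$, that makes the constant $C$ in \eqref{eq31} independent of $R$.
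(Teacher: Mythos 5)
Your proof is correct, but it takes a genuinely different route from the paper's in both halves. For the a~priori bound, the paper invokes Lemma~\ref{lemma-apriori-sigmar} as a black box to get $\|\phi\|_{L^\infty}\leq C(|c_1|+|c_2|+\|e^{\gamma s}h\|_{L^\infty})$ and then controls the constants by a \emph{boundary}-flux estimate: testing with the exact Jacobi fields $z_j$ produces the term $\int_{\partial\Sigma_R}\partial_\nu\phi\,z_j$, which is shown to be $o(1)$ as $R\to\infty$ via the barrier $\big(\tfrac{R-s}{R-R_1}\big)^\mu$ near $s=R$ (see \eqref{eq28}--\eqref{eq29}). You instead run a single compactness argument on the normalized triple $(\phi_n,c_{n,1},c_{n,2})$ and control the constants by testing with the \emph{compactly supported} $\eta_0 z_j$, which eliminates boundary terms altogether; the price you pay is the Green's-identity step in the limit (via Lemma~\ref{lemma grad decays} and the exponential decay of $|A|$ on the wings) needed to kill the limit constants $c_i$. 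For existence, the paper works variationally in $H^1$ (Riesz representation plus a compact perturbation and the Fredholm alternative), whereas you set up a Fredholm system in $C^{2,\alpha}\times\R^2$ and show the principal part $\mathcal A_0$ is an isomorphism by a $2\times 2$ reduction using the sign of $\langle\Delta^{-1}g,g\rangle$. Both routes yield the same result; the paper's is more modular (Lemma~\ref{lemma-apriori-sigmar} is reused elsewhere) while yours is more self-contained and avoids the near-boundary barrier. One small point: your justification that $z_1,z_2$ are linearly independent on $\{\eta_0>0\}$ ``otherwise $\Sigma$ would be flat'' is a bit loose --- linear dependence of $\nu\cdot e_x$ and $\nu\cdot e_y$ would only confine the Gauss map to a great circle. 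The cleaner statement, which the paper also uses (in the proof of Proposition~\ref{prop linear scherk 4}), is that $\int_\Sigma\eta_0 z_iz_j=c\,\delta_{ij}$ with $c>0$ by the reflection symmetries of the Scherk surface, so the Gram matrix is automatically diagonal and positive definite.
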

\begin{proof}
We prove first \eqref{eq31}.
Indeed, by Lemma~\ref{lemma-apriori-sigmar},
$$
\|\phi\|_{L^\infty} \leq C ( |c_1|+|c_2|+ \|e^{\gamma s}h\|_{L^\infty}).
$$
Multiplying by $z_j$ and integrating in $\Sigma_R$, we find
\begin{align}
\label{eq29}
\int_{\partial\Sigma_R} \pd{\phi}{\nu}z_j = c_j \int_{\Sigma_R} \eta_0 z_j^2 .
\end{align}
We claim that
\begin{align}
\label{eq28}
\int_{\partial\Sigma_R} \pd{\phi}{\nu}z_j = o(1) ( \|\phi\|_{L^\infty} + \|e^{\gamma s}h\|_{L^\infty})
\end{align}
where $o(1)\to0$ as $R\to\infty$. Combining \eqref{eq28} and \eqref{eq29}, we get \eqref{eq31}.
To prove \eqref{eq28}, we rewrite the equation on the $i$th wing as
$$
L\phi = \sum_{i=1}^2 c_i \eta_0 z_i   + h
\quad\text{in } S_R,
$$
where $ L\phi= \Delta \phi + a_0 \phi + a_1 \nabla \phi + a_2 D^2 \phi$ and $S_R=\{(s,z): 0< s<R, 0<z<\pi\}$. Here $\Delta = \partial_{ss}+\partial_{zz}$ and $a_i(s,z)$ are smooth with $|a_i(s,z)|\leq C e^{-s}$. Let $R_1>>R_0$ and $R>>R_1$. The function $\bar\phi=\left(\frac{R-s}{R-R_1}\right)^\mu$, where $0<\mu<1$ is fixed, satisfies
$$
L\bar\phi \leq - c (\frac{R-s}{R-R_1})^{\mu-2}
$$
in $(R_1,R)\times(0,\pi)$ for some $c>0$, if we take $R_1$ large. By the maximum principle, $|\phi|\leq C ( \|\phi\|_{L^\infty}+ \|e^{\gamma s}h\|_{L^\infty}) \bar \phi$  in $(R_1,R)\times(0,\pi)$. It follows that
$$
|\pd{\phi}{s}(R,z)|\leq \frac{C}{R-R_1} (\|\phi\|_{L^\infty}+ \|e^{\gamma s}h\|_{L^\infty}) ,
$$
and this proves \eqref{eq28}.

For the existence of a solution of \eqref{eq31}, let us define the Hilbert space
$$
H = \left\{ \phi \in H^1(\Sigma_R \cap \{z\in(0,\pi)\}) :
\phi =0\ \text{on } s=R, \
\int_{\Sigma_R\cap \{z\in(0,\pi)\}} \phi \eta_0 z_i = 0, \ i=1,2 \right\}
$$
with the inner product
$$\langle \varphi_1, \varphi_2 \rangle
=\int_{\Sigma_R\cap \{z\in(0,\pi)\}} \nabla \varphi_1 \nabla \varphi_2
$$
where $\eta_0$ is the cut-off function with properties \eqref{cut0}. We consider the following weak form of the equation to find $\phi\in H$:
\begin{align}
\label{var1}
\int_{\Sigma_R\cap \{z\in(0,\pi)\}} \nabla\phi\cdot\nabla \varphi - |A|^2 \phi\varphi = - \int_{\Sigma_R\cap \{z\in(0,\pi)\}} h\varphi
\quad\forall \varphi \in H.
\end{align}
Let $T:H\to H$, $T(\phi)=\psi$ be the linear operator defined by the Riesz theorem from the relation
$$
\langle \psi,\varphi\rangle = \int_{\Sigma_R\cap \{z\in(0,\pi)\}}|A|^2 \phi \varphi \quad\forall \varphi\in H.
$$
Then $T$ is compact and we can formulate \eqref{var1} as  finding $\phi\in H$ such that
\begin{align}
\label{eq30}
\phi = T(\phi) + L_h,
\end{align}
where $L_h(\varphi) = \int_{\Sigma_R\cap \{z\in(0,\pi)\}} h \varphi$. By the Fredholm theorem, this problem is uniquely solvable for any $h$ provided the only solution of $\phi = T(\phi)$ in $H$ is $\phi=0$. This holds by \eqref{eq31}.
\end{proof}

We now turn our attention to the problem on the whole Scherk surface $\Sigma$.

\begin{proof}[Proof of Proposition~\ref{prop linear1}]
First, let us show that, for any bounded solution $\phi,c_1,c_2$ of \eqref{eq23}
satisfying \eqref{orth2}, the estimate \eqref{eq27} is valid.
Indeed, by Lemma~\ref{lemma apriori sigma},
\begin{align}
\label{est1}
\|\phi\|_{L^\infty} \leq C (|c_1|+|c_2|+\|e^{\gamma s} h\|_{L^\infty}).
\end{align}
For $R>>1$, let $\Sigma_R =\Sigma \cap \{(x,y,z): x^2 + y^ 2 \leq R^2 \} $.
Multiplying \eqref{eq23} by $z_j$ and integrating in $\Sigma_R\cap \{z\in(0,\pi)\}$ we have
\begin{align}
\label{eq24}
\int_{\partial\Sigma_R\cap \{z\in(0,\pi)\}} \pd{\phi}{\nu}z_j = c_j \int_{\Sigma_R\cap \{z\in(0,\pi)\}} \eta_0 z_j^2  + \int_{\Sigma_R\cap \{z\in(0,\pi)\}} h z_j
\end{align}
because $\int_{\Sigma_R\cap \{z\in(0,\pi)\}} \eta_0 z_1 z_2 = 0$ by symmetry. By Lemma~\ref{lemma grad decays},
\begin{align}
\label{eq25}
\int_{\partial \Sigma_R\cap \{z\in(0,\pi)\}} | \pd{\phi}{\nu}z_i  | \leq
C e^{-\gamma R} ( |c_1| + |c_2| +    \|e^{\gamma s }h\|_{L^\infty} + \|\phi\|_{L^\infty} )  .
\end{align}
Combining \eqref{est1}, \eqref{eq24}, and \eqref{eq25}, we deduce
\eqref{eq27} and prove the uniqueness.

%Consider  $H$ the Hilbert space of functions $\phi$ in $H^1(\Sigma)$, which satisfy \eqref{sym} with the inner product
%$$
%\langle \phi,\varphi \rangle_H
%= \int_{\Sigma_c} \nabla \phi\cdot\nabla \varphi + \frac{ \phi \varphi}{1+s^2}
%$$
%where $\Sigma_c = \Sigma\cap \{0<z<\pi\}$.
%%By integration, one can show that
%%$$
%%\int_{\Sigma_c} \frac{\phi^2}{1+s^2} \leq \|\phi\|^2
%%$$
%
%Consider the variational problem to find $\phi\in H$ such that
%\begin{align}
%\label{var1}
%\int_{\Sigma_c} \nabla\phi\cdot\nabla \varphi - |A|^2 \phi\varphi = - \int_{\Sigma_c} h\varphi
%\quad\forall \varphi \in H.
%\end{align}
%Let $T:H\to H$, $T(\phi)=\psi$ be the linear operator defined by the Riesz theorem from the relation
%$$
%\langle \psi,\varphi\rangle_{H_0^1} = \int_{\Sigma_R}|A|^2 \phi \varphi \quad\forall \varphi\in H.
%$$
%Then $T$ is compact

By Lemma~\ref{lemma linear existence truncated}, for $R$ large, the problem \eqref{eq32} is uniquely solvable, yielding a solution to $\phi_R$ (and constants $c_{1,R}$ and $c_{2,R}$), which remains bounded as $R\to\infty$ by \eqref{eq31}. By standard elliptic estimates, $\phi_R$ converges locally uniformly in $\Sigma$ to a solution of \eqref{eq23}.
Estimate \eqref{decay gradient} follows from Lemma~\ref{lemma grad decays}.
\end{proof}

\begin{proof}[Proof of Proposition~\ref{prop linear scherk 4}]
Given $h$ as stated, let $\phi$, $c_1$, $c_2$ be the solution to \eqref{eq23} provided by Proposition~\ref{prop linear1}.
Let $L_i = \lim_{s\to\infty} \phi(s,z)$ be the limit of $\phi$ on wing $i$.
By adding appropriate multiples of $z_1$ and $z_2$, we can make two of these limits equal to zero. More precisely, by the choice of orientation \eqref{orient scherk} on $\Sigma$ and the definition of $z_1$ and $z_2$ in \eqref{def z1 z2}, we have
$$
\lim_{s\to\infty} z_1 = \begin{cases}
-\sin\alpha&\text{on wing 1},\\
\sin\alpha&\text{on wing 2},\\
\sin\alpha&\text{on wing 3},\\
-\sin\alpha&\text{on wing 4},
\end{cases}
\qquad
\lim_{s\to\infty} z_2  = \begin{cases}
\cos\alpha&\text{on wing 1},\\
\cos\alpha&\text{on wing 2},\\
-\cos\alpha&\text{on wing 3},\\
-\cos\alpha&\text{on wing 4}.
\end{cases}
$$
Let $d_1$, $d_2$ satisfy
$$
d_1 \sin\alpha+d_2 \cos\alpha=L_2, \quad
d_1 \sin\alpha-d_2 \cos\alpha=L_3 .
$$
Then $\tilde \phi = \phi-d_1 z_1 - d_2 z_2 $ satisfies \eqref{eq23} and has limit equal to zero on wings 2 and 3. %\comment{we already used $c_1$ and $c_2$. They mean something different in the first line of the proof.}

We remark that  we could also achieve limit equal to zero on any two adjacent wings, but not on opposite ones in general.
Also note that if the original $\phi$ satisfies the orthogonality conditions \eqref{orth2}, the new $\tilde \phi$ does not in general.

Let $\tilde \eta_i$, $i=1$ or $i=4$, be smooth cut-off functions on $\Sigma$ such that:
\begin{align*}
& \text{$\tilde \eta_i=1$ on wing $i$ and for  $s\geq R_c+1$},
\\
& \text{$\tilde \eta_i=0$ on wing $i$ for $0\leq s \leq R_c$},
\\
&
\text{$\tilde \eta_i=0$ on the core and the rest of the wings.}
\end{align*}
Here $R_c>0$ is a large constant to be fixed later. Define
$$
\tilde z_1 = \nu\cdot\nu_\alpha ,\quad \tilde z_4 = \nu\cdot (-\sin\alpha,-\cos\alpha,0),
$$
where $\nu_\alpha = (-\sin\alpha,\cos\alpha,0)$ is  the normal vector to the asymptotic plane of wing 1. Note that $\tilde z_1,\tilde z_4$ are in the kernel of $\Delta+|A|^2$, $\tilde z_1\to1$ as $s_1\to\infty$, and  $\tilde z_4\to1$ as $s_4\to\infty$ (they are convenient linear combinations of $z_1,z_2$). Define
$$
\hat \phi = \tilde \phi - L_1  \tilde \eta_1 \tilde z_1 - L_4 \tilde \eta_4 \tilde z_1 .
$$
Then $\hat \phi$ satisfies
\begin{align*}
\Delta \hat \phi + |A|^2 \hat \phi = h + \sum_{i=1}^2 c_i \eta_0 z_i -\sum_{i=1,4} L_i (\Delta + |A|^2)(\tilde \eta_i \tilde z_i)
\quad\text{in }\Sigma
\end{align*}
and $\hat \phi$ has limit equal to zero on all the wings. Moreover $\hat \phi$ satisfies the symmetries \eqref{sym}.

Suppose that $\beta_i,\tau_i$, $i=1,2$, are given and let us consider the function $\hat \phi$ constructed previously with $h$ replaced by $ h + \sum_{i=1}^2 \beta_i w_i' + \tau_i w_i $. This $\hat \phi$ satisfies
\begin{align}
\label{51}
\Delta \hat \phi + |A|^2 \hat \phi = h + \sum_{i=1}^2 c_i \eta_0 z_i - \sum_{i=1,4} L_i (\Delta + |A|^2)(\tilde \eta_i \tilde z_i) + \sum_{i=1}^2 (\beta_i w_i' + \tau_i w_i)
\end{align}
in $\Sigma$,  has the symmetries \eqref{sym} and the limits on all wings are equal to zero. In this construction, $h$,  $\beta_i,\tau_i$, $i=1,2$ are data and $\hat \phi$, $c_i$ ($i=1,2$), $L_i$ ($i=1,4$) are bounded linear functions of these data.

We claim that there is a unique choice of $\beta_i,\tau_i$, $i=1,2$, such that $c_1=c_2=L_1=L_4=0$. To prove this, we test \eqref{51} with functions, which are going to be linear combinations of $z_1,z_2$ and %\comment{I am not sure what the notation $\nabla_{x,y,z} S_\alpha$ means.}
$$
z_3 = \nu \cdot   (-y,x,0), \quad  z_4=  \frac{\partial_\alpha S}{|\nabla_{x,y,z} S|} ,
$$
where
$$
S(x,y,z,\alpha)
=\cos^2(\alpha) \cosh \left(\frac{x}{\cos \alpha}\right) - \sin^2 (\alpha) \cosh \left(\frac{y}{\sin \alpha}\right) - \cos (z)
$$
is the function \eqref{scherk} defining the Scherk surface, and $\nabla_{x,y,z} S = (\partial_x S,\partial_y S, \partial_z S)$.
Note that $z_3$ arises from a rotation about the $z$-axis and $z_4$ is generated by the motion in  $\alpha$ of the Scherk surfaces $\Sigma(\alpha)$, so these functions are in the kernel of the Jacobi operator $\Delta +|A|^2 $ and have the symmetries \eqref{sym}.
Also, $z_3$ and $z_4$ have linear growth.

Since $\hat \phi$ has exponential decay,
$$
\int_{\Sigma \cap \{z\in(0,\pi)\} } (\Delta \hat\phi + |A|^2 \hat\phi) z_i =
\int_{\Sigma \cap \{z\in(0,\pi)\} } (\Delta z_i + |A|^2 z_i) \hat\phi  =0 ,
$$
for all $1\leq i \leq 4$, while
$
\int_{\Sigma \cap \{z\in(0,\pi)\} }(RHS) z_i
$
becomes an affine function of the numbers $c_1,c_2,L_1,L_4,\beta_1,\beta_2,\tau_1,\tau_2$.
More precisely, we obtain
$$
0=
\left[
\begin{matrix}
\int h z_1\\\int h z_2\\\int h z_3\\\int h z_4\\
\end{matrix}
\right] + M_1
\left[
\begin{matrix} c_1\\c_2\\L_1\\L_4
\end{matrix}
\right]
+
M_2
\left[
\begin{matrix} \beta_1\\ \beta_2 \\ \tau_1 \\ \tau_2
\end{matrix}
\right].
$$
The claim is that we can choose $\beta_1,\beta_2,\tau_1,\tau_2$ to achieve $c_1=c_2=L_1=L_4=0$. For this, we will verify that $M_1$, $M_2$ are invertible (choosing adequately the parameters).

Note that, by symmetry,
$$
\int_{\Sigma\cap \{z\in(0,\pi)\} } \eta_0 z_i z_j= c \delta_{ij} , \quad \text{for }i,j=1,2,
$$
where $c>0$ is some constant. Also thanks to symmetry, we have for $i=1,2$,
$$
\int_{\Sigma\cap \{z\in(0,\pi)\} } \eta_0 z_i z_3=0 ,\qquad
\int_{\Sigma\cap \{z\in(0,\pi)\} } \eta_0 z_i z_4=0.
$$

Let us compute for $i=1$ or $i=4$, and $j=1,2$:
\begin{align*}
\int_{\Sigma\cap \{z\in(0,\pi)\} } (\Delta+|A|^2)(\tilde \eta_i \tilde z_i) z_j
&= \int_{\Sigma\cap \{z\in(0,\pi)\} } (2\nabla \tilde \eta_i \cdot \nabla \tilde z_i + \tilde z_i \Delta \tilde \eta_i) z_j \\
&= \int_{\Sigma\cap \{z\in(0,\pi)\} } \nabla \cdot (\tilde z_i^2 \nabla \tilde \eta_i) \frac{z_j}{\tilde z_i}\\
&= -\int_{\Sigma\cap \{z\in(0,\pi)\} } \tilde z_i^2 \nabla \tilde \eta_i \nabla \left(\frac{z_j}{\tilde z_i}\right) = O ( e^{-R_c}),
\end{align*}
where the last equality holds because $\tilde z_i,z_j$ approach constants at an exponential rate.

For $i=1,4$, by the same computation, and using that $z_3$ has linear growth (for example $z_3 = \sin(\alpha) y + \cos(\alpha) x + O( s e^{-s})$ on wing 1, where $s = \sqrt{x^2+y^2}+O(1)$), we obtain
\begin{align*}
\int_{\Sigma\cap \{z\in(0,\pi)\} } (\Delta+|A|^2)(\tilde \eta_i \tilde z_i) z_3
& =-\int_{\Sigma\cap \{z\in(0,\pi)\} } \tilde z_i^2 \nabla \tilde \eta_i \nabla \left(\frac{z_3}{\tilde z_i}\right)
\\
&= - \bar B (-1)^{i-1}+ o(1)
\end{align*}
where $\bar B>0$ and $o(1)\to 0$ as $R_c\to\infty$.
Similarly, for $i=1,4$,
\begin{align*}
\int_{\Sigma\cap \{z\in(0,\pi)\} } (\Delta+|A|^2)(\tilde \eta_i \tilde z_i) z_4
& =-\int_{\Sigma\cap \{z\in(0,\pi)\} } \tilde z_i^2 \nabla \tilde \eta_i \nabla (\frac{z_4}{\tilde z_i})
\\
&= B + o(1)
\end{align*}
where $B>0$.
This implies that the matrix $M_1$
%\begin{align*}
%\left[
%\begin{matrix}
%\int \eta_0 z_1 z_1 & \int \eta_0 z_1 z_2 &
%\int \eta_0 z_1 z_3 & \int \eta_0 z_1 z_4
%\\
%\int \eta_0 z_2 z_1 & \int \eta_0 z_2 z_2 &
%\int \eta_0 z_2 z_3 & \int \eta_0 z_2 z_4
%\\
%\int (\Delta +|A|^2)(\tilde \eta_1 \tilde z_1) z_1 & \int (\Delta +|A|^2)(\tilde \eta_1 \tilde z_1) z_2 &
%\int (\Delta +|A|^2)(\tilde \eta_1 \tilde z_1)z_3 & \int (\Delta +|A|^2)(\tilde \eta_1 \tilde z_1) z_4
%\\
%\int (\Delta +|A|^2)(\tilde \eta_4 \tilde z_4) z_1 & \int (\Delta +|A|^2)(\tilde \eta_4 \tilde z_4) z_2 &
%\int (\Delta +|A|^2)(\tilde \eta_4 \tilde z_4)z_3 & \int (\Delta +|A|^2)(\tilde \eta_4 \tilde z_4) z_4
%\end{matrix}
%\right]
%\end{align*}
is invertible if $R_c$ is taken large (and fixed).

Let us now estimate the matrix $M_2$. This means, we need  to estimate the following integrals for $i=1,2$, $j=1,\ldots,4$:
\begin{align*}
\int_{\Sigma\cap \{z\in(0,\pi)\} }
w_i z_j, \quad \int_{\Sigma\cap \{z\in(0,\pi)\} }
w_i' z_j
.
\end{align*}
%From \eqref{def w}, \eqref{def w prime} we can express
%\begin{align*}
%w_1 &= (\Delta+|A|^2)(\eta_{tr,1}) \quad\text{on wing 1}
%\\
%w_2 &= (\Delta+|A|^2)(\eta_{tr,4}) \quad\text{on wing 4}
%\\
%w_1' &= (\Delta+|A|^2)(\eta_{rot,1} (z_3 + (y_\alpha,0,0) ) ) \quad\text{on wing 1}
%\\
%w_2' &= - (\Delta+|A|^2)(\eta_{rot,4} (z_3 - (y_\alpha,0,0) ) ) \quad\text{on wing 4} .
%\end{align*}
%The number $y_\alpha$ is the $y$ coordinates of intersection of the planes asymptotic to wings 1 and 2 (see \eqref{def y alpha}).
%\comment{earlier notation: $\eta_{R_{rot}}$ and $\eta_{R_{tr}}$}
%The cut-off functions $\eta_{tr,i}$ and $\eta_{rot,i}$ are the ones defined in ....
Using \eqref{formula w}, for $j=1,2$, we have
\begin{align*}
\int_{\Sigma\cap \{z\in(0,\pi)\} } w_1 z_j
&=
\int_{\Sigma\cap \{z\in(0,\pi)\} }
(\Delta+|A|^2)(\eta_{tr,1})  z_j = O(e^{-R_{tr}})
\end{align*}
and similarly  $\int w_2 z_j= O(e^{-R_{tr}})$.
Next, we compute
\begin{align*}
\int_{\Sigma\cap \{z\in(0,\pi)\} } w_1 z_3
&=
\int_{\Sigma\cap \{z\in(0,\pi)\} }
(\Delta+|A|^2)(\eta_{tr,1})  z_3
\\
&=\int_{\Sigma\cap \{z\in(0,\pi)\} }
z_3 \Delta\eta_{tr,1}
+\int_{\Sigma\cap \{z\in(0,\pi)\} }
|A|^2 \eta_{tr,1}  z_3.
%\\
%&=
%\int_{\Sigma\cap \{z\in(0,\pi)\} }
%\Delta(\eta_{tr,1})  z_3 + O(R_{tr}e^{-R_{tr}}) .
\end{align*}
The first integral is supported on $R_{tr}\leq s \leq R_{tr}+1$, so integrating by parts gives
\begin{align*}
\int_{\Sigma\cap \{z\in(0,\pi)\} }
z_3 \Delta\eta_{tr,1}
&=
-\int_{\{R_{tr}\leq s \leq R_{tr}+1\}} \nabla z_3\nabla \eta_{tr,1}
\\
&=-\nabla z_3 \hat n|_{s=R_{tr}+1} +
\int_{ \{R_{tr}\leq s \leq R_{tr}+1\} }\eta_{tr,1}  \Delta z_3,
\end{align*}
where $\hat n$ is tangent to $\Sigma$ and perpendicular to the curve $s=R_{tr}+1$.
Therefore
\begin{align*}
\int_{\Sigma\cap \{z\in(0,\pi)\} } w_1 z_3
&=
-\nabla z_3 \hat n|_{s=R_{tr}+1} +
\int_{\{s\geq R_{tr}+1\}} |A|^2 z_3 \\
&= - \pi +o(1) ,
\end{align*}
where  $o(1)\to0$ as $R_{tr} \to \infty$, thanks to the behavior $z_3 = s + O(1)$ as $s\to\infty$ and a corresponding estimate for the its derivative. Similarly,
\begin{align*}
\int_{\Sigma\cap \{z\in(0,\pi)\} } w_2 z_3
&= -\pi +o(1) ,
\\
\int_{\Sigma\cap \{z\in(0,\pi)\} } w_1 z_4
&= -\pi +o(1) ,
&
\int_{\Sigma\cap \{z\in(0,\pi)\} } w_2 z_4
&= \pi +o(1) .
\end{align*}

Let us now compute
\begin{align*}
\int_{\Sigma\cap \{z\in(0,\pi)\} } w_1' z_1
&=
\int_{\Sigma\cap \{z\in(0,\pi)\} } (\Delta+|A|^2)(\eta_{rot,1} z_3  ) z_1,
\end{align*}
where we have used \eqref{formula w prime}.
We have
\begin{align*}
\int_{\Sigma\cap \{z\in(0,\pi)\} } (\Delta+|A|^2)(\eta_{rot,1} z_3) z_1
&=
\int_{\Sigma\cap \{z\in(0,\pi)\} } (z_3 \Delta\eta_{rot,1} + 2 \nabla \eta_{rot,1} \nabla z_3 ) z_1
\\
&=\int_{\Sigma\cap \{z\in(0,\pi)\} } z_1 \nabla \eta_{rot,1} \nabla z_3
-  z_3\nabla \eta_{rot,1}  \nabla z_1
\\
&= -\pi\sin(\alpha) + o(1),
\end{align*}
where  $o(1)\to 0 $ as $R_{rot}\to0$.
In a similar way,
\begin{align*}
\int_{\Sigma\cap \{z\in(0,\pi)\} } w_1' z_2 &= \pi\cos(\alpha)+o(1),\\
\int_{\Sigma\cap \{z\in(0,\pi)\} } w_2' z_1 &= \pi\sin(\alpha)+o(1),&
\int_{\Sigma\cap \{z\in(0,\pi)\} } w_2' z_2 &= \pi\cos(\alpha)+o(1) .
\end{align*}
We have   also
%\comment{does not match with what is below, where you say $\int w_1'z_4 = O(1)$}%all integrals of the form
\begin{align*}
\int_{\Sigma\cap \{z\in(0,\pi)\} } w_{1}' z_{3}=0 ,
\quad
\int_{\Sigma\cap \{z\in(0,\pi)\} } w_{2}' z_{3}=0 .
\end{align*}
Indeed, consider
\begin{align*}
\int_{\Sigma\cap \{z\in(0,\pi)\} } w_1' z_3 &=
\int_{\Sigma\cap \{z\in(0,\pi)\} }  (\Delta+|A|^2)(\eta_{rot,1} z_3   ) z_3
\\
&=
\int_{\Sigma\cap \{z\in(0,\pi)\} } (z_3 \Delta\eta_{rot,1} + 2 \nabla \eta_{rot,1} \nabla z_3 ) z_3
\\
&=
\int_{\Sigma\cap \{z\in(0,\pi)\} }
\nabla\cdot(z_3^2 \nabla \eta_{rot,1})
\\
&=0.
\end{align*}
The integral $\int w_{2}' z_{3}=0 $ is computed similarly.

%\comment{is it 0 or o(1)? I did not check but you said o(1) above.}
Finally, we observe that
%\comment{does not match with what is above. Also, there is no mention of $\int w'_2 z_3$}
\begin{align*}
\int_{\Sigma\cap \{z\in(0,\pi)\} } w_1' z_4 =O(1),
\qquad
\int_{\Sigma\cap \{z\in(0,\pi)\} } w_2' z_4 =O(1),
%&=
%\int_{\Sigma\cap \{z\in(0,\pi)\} }  (z_3 \Delta\eta_{rot,1} + 2 \nabla \eta_{rot,1} \nabla z_3 ) z_4
%\\
%&=\int_{\Sigma\cap \{z\in(0,\pi)\} }
%\frac{z_4}{z_3}\nabla\cdot(z_3^2 \nabla \eta_{rot,1})
%\\
%&=-\int_{\Sigma\cap \{z\in(0,\pi)\} }
%z_3^2  \nabla (\frac{z_4}{z_3}) \nabla \eta_{rot,1}
%\\
%&=o(1),
\end{align*}
%because $z_4/z_3$ approaches a constant and $\nabla(z_4/z_3)\to 0$ as $s\to\infty$.
%\comment{I am writing these statements without proof}
as $R_{rot}\to\infty$.
Then
\begin{align*}
M_2=
\left[
\begin{matrix}
\int w_1 z_1 & \int w_1 z_2 & \int w_1 z_3 & \int w_1 z_4 \\
\int w_2 z_1 & \int w_2 z_2 & \int w_2 z_3 & \int w_2 z_4 \\
\int w_1' z_1 & \int w_1' z_2 & \int w_1' z_3 & \int w_1' z_4 \\
\int w_2' z_1 & \int w_2' z_2 & \int w_2' z_3 & \int w_2' z_4
\end{matrix}
\right]
=\left[
\begin{matrix}
0 & 0 & -\pi & -\pi \\
0 & 0 & -\pi & \pi \\
-\pi\sin(\alpha) & \pi\cos(\alpha) & 0 & O(1) \\
\pi\sin(\alpha) & \pi\cos(\alpha) & 0 & O(1)
\end{matrix}
\right]+o(1)
\end{align*}
This shows that $M_2$ is invertible if we fix both $R_{rot}+10<R_{tr}$ large,which finishes the proof.
\end{proof}

\section{Linear theory}

Let $\mathcal E = \mathcal M \cap \{ s \geq \frac{\delta_s}{3\ve} \}$.
In this section we construct a right inverse of the operator $\Delta + |A|^2 + \ve \partial_z $ on $\mathcal E$. More precisely we want, given $h$ defined on $\mathcal E$ with some decay, a solution $\phi$ of
\begin{align}
\label{linear ends}
\Delta \phi + |A|^2 \phi + \ve \partial_z \phi = h
\quad \text{on }\mathcal E.
\end{align}
Given $\alpha,\gamma\in (0,1)$, let us define the following norms:
\begin{align}
\label{norm end sol}
\|\phi_2\|_{*,\mathcal E}
&= \ve^2 \sup_{x \in \mathcal E}  e^{ \gamma\delta_s/\ve+\gamma \ve s(x)} \|\phi_2 \|_{C^{2,\alpha}(\overline B_1(x))}
\\
\label{norm end rhs}
\|h_2\|_{**,\mathcal E}
&= \sup_{x \in \mathcal E} e^{  \gamma\delta_s/\ve+\gamma \ve s(x)} \|h_2\|_{C^{2,\alpha}(\overline B_1(x))} ,
\end{align}
where $B_1(x)$ is the geodesic ball with center $x$ and radius 1, and $s$ is the function defined in the construction of $\mathcal M$, Section~\ref{sec:construction of M}.
The factor $e^{\gamma\delta_s/\ve}$ in front of both norms is not immediately relevant; it will be useful later.

We have the following result.
\begin{prop}
\label{prop linear ends}
Let $0<\gamma<1$. Then there is a linear operator that associates to a function $h$ defined on $\mathcal E$ with $\|  h\|_{**,\mathcal E}<\infty$
 a solution $\phi$ to \eqref{linear ends}. Moreover,
$$
\| \phi\|_{*,\mathcal E}\leq C \|  h\|_{**,\mathcal E}.
$$
\end{prop}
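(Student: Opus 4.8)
The plan is to exploit that, away from the core, $\mathcal M$ is essentially a disjoint union of rotationally symmetric self-translating ends, on which $\lcal_\ve$ can be analyzed by separation of variables, and to prove a uniform (in $\ve$) a priori estimate which then yields a right inverse by the usual exhaustion argument. First I would observe that $\mathcal E$ splits into the finitely many unbounded ends of $\mathcal M$, each of which, by the construction in Section~\ref{sec:construction of M}, is the normal graph of $u=p\,f_\alpha\,\eta(\ve s)$ over a rotationally symmetric self-translating surface $S_\ve$ (a truncated travelling catenoid or paraboloid). Since $s\geq\delta_s/(3\ve)$ on $\mathcal E$, estimate \eqref{eq:Dku} gives $\|u\|_{C^5}\leq C e^{-\delta_s/(3\ve)}$, so --- pulling $\lcal_\ve$ back to $S_\ve$ as in Section~\ref{sec:prelim} --- the operator on $\mathcal E$ differs from $\lcal_\ve^S:=\Delta_{S_\ve}+|A_{S_\ve}|^2+\ve\,\partial_z$ by a second order operator whose coefficients are $O(e^{-\delta_s/(3\ve)})$ in the relevant weighted norms. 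It therefore suffices to build a right inverse for $\lcal_\ve^S$ on each $S_\ve$, with zero boundary data on $\{s=\delta_s/(3\ve)\}$, bounded uniformly in $\ve$ from $\|\cdot\|_{**,\mathcal E}$ to $\|\cdot\|_{*,\mathcal E}$; the operator attached to $\mathcal M$ is then recovered by a Neumann series, the perturbation having norm $o(1)$.

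The heart of the matter is a uniform a priori estimate for $\lcal_\ve^S$. Here the right change of variables is the dilation $S_\ve=\ve^{-1}S$: it turns $\lcal_\ve^S$ into $\ve^2$ times the fixed, $\ve$-independent operator $\widehat\lcal:=\Delta_S+|A_S|^2+\langle e_z,\nabla_S\rangle$ on the fixed travelling catenoid or paraboloid --- equivalently, acting on a function $\phi$, $\widehat\lcal\phi=e^{-z}\,\mathrm{div}(e^{z}\nabla_S\phi)+|A_S|^2\phi$ --- and turns the weight $e^{\gamma\ve s}$ into the $\ve$-independent weight $e^{\gamma z}$; this is exactly why the factor $\ve^2$ occurs in \eqref{norm end sol}--\eqref{norm end rhs}. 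The decisive computation is that, using the identities $|\nabla_S z|^2=1-H_S^2$ and $\Delta_S z=\pm H_S^2$ valid on a self-translator,
\[
\widehat\lcal\bigl(e^{-\gamma z}\bigr)=e^{-\gamma z}\bigl(\gamma(\gamma-1)+O(H_S^2+|A_S|^2)\bigr).
\]
Since on a travelling catenoid or paraboloid $H_S\to 0$ and $|A_S|\to 0$ along each end, and since $0<\gamma<1$ makes $\gamma(\gamma-1)<0$, the function $e^{-\gamma z}$ is a strict supersolution for $\widehat\lcal$ on the part of each end lying far from the core; conjugating $\widehat\lcal$ by this supersolution restores a maximum principle there, despite the zeroth order term $|A_S|^2$ having the ``wrong'' sign. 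The bounded remainder of each rescaled end --- where $|A_S|^2$ is of order one --- is treated by a contradiction argument in the spirit of Lemma~\ref{lemma-apriori-sigmar} and the proof of Proposition~\ref{prop linear1}: were the estimate to fail, a subsequence of rescaled, renormalized solutions would converge locally to a bounded, exponentially decaying, nontrivial solution of $\widehat\lcal\phi_\infty=0$ on a half-end of the travelling catenoid or paraboloid with zero Dirichlet data on the inner boundary circle; by separation of variables in the angular variable this reduces to a finite system of second order ODEs on a half-line, and one checks that the only exponentially decaying solution with the prescribed boundary value is trivial --- a contradiction. Combining the two regions gives $\|\phi\|_{*,\mathcal E}\leq C\|h\|_{**,\mathcal E}$ with $C$ independent of $\ve$; the first-derivative part of the $C^{2,\alpha}$ norm is recovered from interior Schauder estimates on unit balls, as in Lemma~\ref{lemma grad decays}.

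With the a priori bound in hand, existence follows the familiar route: on each end one solves $\lcal_\ve^S\phi_R=h$ on $\mathcal E\cap\{s\leq R\}$ with $\phi_R=0$ on the two bounding circles, a problem uniquely solvable by the Fredholm alternative since the a priori estimate forces the homogeneous problem to have only the trivial solution, exactly as in Lemma~\ref{lemma linear existence truncated}. The $\phi_R$ are bounded uniformly in $R$, and letting $R\to\infty$ with local elliptic estimates produces a solution $\phi$ of \eqref{linear ends} on each $S_\ve$ with $\|\phi\|_{*,\mathcal E}\leq C\|h\|_{**,\mathcal E}$. Undoing the dilation and running the Neumann series of the first paragraph then gives the right inverse for $\lcal_\ve$ on $\mathcal E$ with the stated estimate.

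The step I expect to be the main obstacle is the a priori estimate on the bounded portion of the ends. The barrier $e^{-\gamma z}$ only works far from the core, so there the estimate genuinely requires the non-degeneracy of the Jacobi operator of the travelling catenoid and paraboloid; unlike the minimal-surface situation of Section~\ref{sec:jacobi-on-scherk}, no Weierstrass-type representation is available, so this non-degeneracy has to be extracted from the rotationally symmetric ODE structure of these surfaces --- in particular one must rule out resonant boundary values, possibly by exploiting the freedom in the parameter $\delta_s$. A secondary, purely technical point is to keep track of all powers of $\ve$ --- through the dilation, the renormalization in the compactness argument, and the comparison between the weighted $C^{2,\alpha}$ norms computed with the metric of $\mathcal M$ and with that of the model $S_\ve$ --- so that the final constant is genuinely uniform as $\ve\to 0$.
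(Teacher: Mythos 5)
Your overall strategy is recognizably the same as the paper's: rescale by $\ve$ to reduce to the $\ve$-independent operator $\Delta_S+|A_S|^2+\partial_z$ on a fixed travelling catenoid or paraboloid, treat the exact rotationally symmetric operator $L_0$ first and the $O(e^{-\delta_s/\ve})$ perturbation from the $f_\alpha$-graph by a fixed-point/Neumann argument, and run the barrier $e^{-\gamma z}$ (the paper uses $e^{-\gamma s}$; since $\gamma_3(s)=s+O(\sqrt s)$ by Lemma~\ref{lemma:order-gamma} the two are interchangeable, and your computation via $\Delta_S z=H_S^2$, $|\nabla_S z|^2=1-H_S^2$ is a clean way to see it). However, there is a genuine gap exactly where you flag it, and the flag is not a loose end you could tidy up with the tools you mention.

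You impose a zero Dirichlet condition on the inner circle and try to establish the a priori estimate by compactness. If that argument fails, you obtain a nontrivial bounded solution $\phi_\infty$ of $L_0\phi_\infty=0$ on the half-end, decaying like $e^{-\gamma s}$ and vanishing on the boundary circle. After Fourier decomposition in $\theta$, the high modes $|k|\ge k_0$ can indeed be killed by the barrier, because for large $k$ the term $-a(s)k^2$ dominates $|A|^2$ for \emph{all} $s\ge 0$ (this is precisely \eqref{super}). But for each low mode $|k|<k_0$ the homogeneous ODE \eqref{homog eq k} has two independent solutions with asymptotics $s^{-b_k}$ and $s^{b_k}e^{-s}$, $b_k=(1-k^2)/2$; the first is never $O(e^{-\gamma s})$, so $\phi_{\infty,k}$ must be a multiple of the exponentially decaying one, $z_{2,k}$. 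Ruling out $\phi_{\infty,k}\not\equiv 0$ therefore amounts to proving $z_{2,k}(s_*)\ne 0$ at the boundary point $s_*$, for each of the finitely many low $k$. That is a non-degeneracy statement about the Jacobi operator of the travelling catenoid/paraboloid with Dirichlet data at a specific circle, and you offer no proof of it --- you explicitly propose ``exploiting the freedom in the parameter $\delta_s$,'' which is problematic on two counts: $\delta_s$ is used for several other purposes in the construction (so dodging a countable set of bad radii must be shown compatible with those), and a uniform constant in the a priori estimate requires a quantitative lower bound on $|z_{2,k}(s_*)|$, not just genericity.

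The paper sidesteps this issue entirely, and that is the structural difference worth noting. In Lemma~\ref{sol each mode} the low-mode ODEs $|k|<k_0$ are solved \emph{directly} by variation of parameters, with no boundary condition at $s=0$ imposed; the high-mode part is then handled by the barrier plus compactness as in Lemma~\ref{lemma a priori big k}, where the limiting homogeneous solution automatically has vanishing low modes and so is killed for all $k$. Because no Dirichlet condition is imposed at the inner circle, the inverse $T_0$ built in Proposition~\ref{prop inverse L0} produces a solution whose boundary value is merely \emph{controlled} ($|\phi_k(0)|\le C_k\|e^{\gamma s}h_k\|_{L^\infty}$), and the eventual mismatch with the core solution is absorbed by the overlapping cut-offs $\eta_1,\eta_2$ in the final system \eqref{sys-eq1}--\eqref{sys-eq2}. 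If you insist on zero boundary data as in your proposal you re-create the resonance problem; if you drop it, the low-mode resolvent becomes elementary. This is the missing idea. A secondary point: $\mathcal E=\mathcal M\cap\{s\ge\delta_s/(3\ve)\}$ also has a \emph{bounded} component (the inner disk of the paraboloid), which you do not mention; the paper handles it separately in Lemma~\ref{solvability bounded component} with a barrier built from the positive Jacobi field $\nu\cdot e_z$, which is a different argument from the one you use on the unbounded ends.
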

%The norm of $\phi$ can be improved slightly, but this is already

For the proof, we scale to size one, that is, we work on $\tilde{\mathcal E} =\ve \mathcal E$. Then \eqref{linear ends} becomes equivalent to
\begin{align}
\label{eq linear ends2}
\Delta \phi + |A|^2 \phi + \partial_z \phi = \tilde h
\quad\text{on }\tilde{\mathcal E},
\end{align}
with $\tilde h(x)= \ve^{-2} h(x/\ve)$.

We  study the linear operator on the unbounded pieces in the following section, then in Section \ref{ssec:Linear theory bounded}, we deal with the bounded piece. We point out that in the radially symmetric case a related linear theory for the Jacobi operator was developed on \cite{dkw}.

\subsection{Linear theory on the ends}
\label{sec Linear theory on the ends}
%Then Proposition~\ref{prop linear ends} follows from the following:
%\begin{prop}
%\label{prop linear ends2}
%Let $0<\gamma<1$. Then there is a linear operator that to a function $g$ defined on $\tilde{\mathcal E}$ with $\| e^{\gamma s} h\|_{L^\infty}<\infty$
%associates a solution $\phi$ to \eqref{eq linear ends2} and moreover
%$$
%\| e^{\gamma s} \phi\|_{L^\infty}\leq C \| e^{\gamma s} h\|_{L^\infty}.
%$$
%\end{prop}

Let $\mathcal E_u$ be any of the unbounded components of $\mathcal E$ and $\tilde{\mathcal E}_u = \ve \mathcal E_u$.
We introduce coordinates on $\tilde{\mathcal E}_u$ as follows. Consider a curve
$$
s\mapsto (\gamma_1(s),\gamma_3(s)),
$$
parametrized by arc length, with $s \in [0,\infty)$ and $\gamma_1'(s)>0$, that solves the ordinary differential equation \eqref{eq:self-trans-ode},
%\[
%	- \gamma_1'' \gamma_3' +  \gamma_1' \gamma_3''+ \frac{\gamma_3'}{\gamma_1}-\gamma_1'=0, \quad \gamma_1'^2 + \gamma_3'^2 =1 .
%\]
with initial conditions at $s=0$  chosen to be compatible with the construction of the initial approximation $\mathcal M$ in Section~\ref{section:Fitting the Scherk surface}. These initial conditions are functions of the parameters of the construction, in particular, the parameters $\beta_1,\beta_4, \tau_1,\tau_4$  used in the dislocations are all in $[-\delta_p,\delta_p]$. We note that the $s$ here differs from $s$ in the construction of $\mathcal M$, Section~\ref{sec:construction of M}, by a shift and a scaling.

Then
\begin{align}
\label{X0}
X_0 = \nu_0(s,\theta) = (\gamma_1(s)\cos(\theta),\gamma_1(s)\sin(\theta), \gamma_3(s))
\end{align}
$s \in [0,\infty)$, $\theta\in [0,2\pi]$, parametrizes part of the catenoid $\mathcal W_R$ or the paraboloid $\mathcal P$. Let
$$
\nu_0(s,\theta) = (-\gamma_3'(s)\cos(\theta),-\gamma_3'(s)\sin(\theta), \gamma_1'(s))
$$
be a unit normal vector.
Then a parametrization of the unbounded end $\tilde{\mathcal E}_u$ is given by
\begin{align}
\label{X}
X(s,\theta) = X_0(s,\theta)+ \nu_0(s,\theta) f(s/\ve,{\theta}/{\ve})
\end{align}
where $f$ is essentially a cut-off function times $f_\alpha$, the function that allows one to write the Scherk surface as a graph over a plane, see Lemma~\ref{lem:scherk-properties}. The important properties of $f$ are that $|\partial^k f(\tilde s,\tilde \theta)| \leq C_k e^{ - \delta_s /(10\ve)}$ for some $C_k$ and that it vanishes for $\tilde s\geq 10 \delta_s/\ve$.

Then we have the following expression for the operator $\Delta+|A|^2+\partial_z $ in the coordinates $s$ and $\theta$:
\begin{align}
\label{Jacobi ends}
\Delta \phi +|A|^2\phi+\partial_z\phi=
\partial_{ss} \phi + \frac{1}{\gamma_1(s)^2} \partial_{\theta\theta} \phi + \left( \frac{\gamma_1'(s)}{\gamma_1(s)}+\gamma_3'(s)\right)\partial_s\phi+|A|^2 \phi  + \tilde L \phi
\end{align}
where $\tilde L$ is a second order differential operator in $\phi$ with coefficients that are $o(1)$ as $\ve\to0$ and supported in $s\in[0,10\delta_s]$.
%\textcolor{blue}{From my computations, I have:}
%$$
%\Delta \phi +|A|^2\phi+\partial_z\phi=
%\partial_{ss} \phi + \frac{1}{\gamma_1(s)^2} \partial_{\theta\theta} \phi + \textrm{\textcolor{blue}{$\frac{\gamma_1'(s)}{\gamma_1(s)} \partial_{\theta} \phi$}}+\gamma_3'(s)\partial_s\phi+|A|^2 \phi  + + \tilde L \phi
%$$
%\textcolor{blue}{Can you check? If I am correct, it makes \eqref{eq phi k} more complicated but should not be a big problem.}
%Let us write $\Delta+|A|^2+\partial_z $ in the form
Using \eqref{asymp varphi} and the fact that the principal curvatures of a surface of revolution $z = F(r)$ are given by
$$
\kappa_1 = \frac{F''(r)}{(1+F'(r)^2)^{3/2}},
\quad
\kappa_2 = \frac{F'(r)}{r (1+F'(r)^2)^{1/2}},
$$
we can write
$$
\Delta \phi +|A|^2\phi+\partial_z\phi=\partial_{ss} \phi +a(s) \partial_{\theta\theta} \phi + b(s) \partial_s \phi + |A|^2 \phi + \tilde L\phi,
$$
where we have the following properties of the coefficients:
\begin{align}
\label{asymp coeff L0}
\left\{
\begin{aligned}
a(s) &= \frac{1}{2s}(1+O(s^{-1/2}))\\
b(s) &= 1+O(s^{-1/2})\\
|A|^2 &=  \frac{1}{2s}(1+O(s^{-1/2}))
\end{aligned}
\right.
\end{align}
as $s\to\infty$, and $a(s)>0$ for all $s\geq 0$.

Let
$$
L_0\phi=\partial_{ss} \phi +a(s) \partial_{\theta\theta}\phi + b(s) \partial_s \phi + |A|^2 \phi ,
$$
and consider the equation
\begin{align}
\label{eq linear ends3}
L_0\phi = h, \quad s\in (0,\infty), \ \theta\in[0,2\pi].
\end{align}
To prove Proposition~\ref{prop linear ends}, we will first construct an inverse operator for $L_0$.
\begin{prop}
\label{prop inverse L0}
Let $0<\gamma<1$. Then there is a linear operator $h\mapsto \phi$ that associates to a function $h=h(s,\theta)$ defined for $ (s,\theta)\in (0,\infty)\times[0,2\pi]$, $2\pi$-periodic in $\theta$ with $\|e^{\gamma s} h\|_{L^\infty}<\infty$ a solution $\phi$ of  \eqref{eq linear ends3}, which is  $2\pi$-periodic in $\theta$ and satisfies
$$
\|e^{\gamma s}\phi\|_{L^\infty}\leq C \|e^{\gamma s} h \|_{L^\infty}.
$$
\end{prop}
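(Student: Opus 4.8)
The plan is to build the inverse of $L_0$ by separating variables in $\theta$ and then to prove the weighted bound by a barrier-plus-compactness argument in the spirit of Lemmas~\ref{lemma-apriori-sigmar}--\ref{lemma grad decays}; I will look for the solution normalized by $\phi(0,\cdot)=0$, which is legitimate since the statement imposes no condition at $s=0$ and asks only for \emph{some} bounded linear right inverse. Writing $\phi(s,\theta)=\sum_{n\in\Z}\phi_n(s)e^{in\theta}$ and $h=\sum_n h_n(s)e^{in\theta}$, equation \eqref{eq linear ends3} becomes the family of scalar ODEs
$$
\phi_n''+b(s)\phi_n'+\bigl(|A|^2-n^2a(s)\bigr)\phi_n=h_n,\qquad s\in(0,\infty).
$$
Using the coefficient asymptotics \eqref{asymp coeff L0} and the substitution $\phi_n=e^{-\frac12\int_0^s b}\psi_n$, which kills the first-order term and produces the Schr\"odinger potential $V_n=|A|^2-n^2a-\tfrac14 b^2-\tfrac12 b'$ satisfying $V_n\le-\tfrac14+o(1)$ \emph{uniformly in $n$} with $V_n\to-\tfrac14$, a standard asymptotic integration (of Levinson type) shows that each ODE has one homogeneous solution $\Psi_n^-$ decaying like $e^{-s}$ and a second one behaving like $s^{-1/2}$ for $n=0$, like a nonzero constant for $|n|=1$, and like $s^{(n^2-1)/2}$ for $|n|\ge2$. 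Consequently, for every $n$ the space of homogeneous solutions bounded by $e^{-\gamma s}$ is exactly $\R\,\Psi_n^-$, and after translating the base point $s=0$ if necessary one may assume $\Psi_n^-(0)\ne0$ for all $n$ (exceptional modes, if any, being treated by an extra orthogonality condition as in Proposition~\ref{prop linear1}). This is the nondegeneracy input that replaces the Montiel--Ros argument used on the Scherk surface.

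Next I would prove the a priori estimate: any $\phi$ with $\phi(0,\cdot)=0$, $\|e^{\gamma s}\phi\|_{L^\infty}<\infty$ and $L_0\phi=h$ obeys $\|e^{\gamma s}\phi\|_{L^\infty}\le C\|e^{\gamma s}h\|_{L^\infty}$. From \eqref{asymp coeff L0} one has $\gamma^2-\gamma b+|A|^2\to\gamma^2-\gamma<0$, so $e^{-\gamma s}$ is a positive supersolution of $L_0$ for $s\ge s_0$ with $s_0$ large; hence the maximum principle holds for $L_0$ on $\{s\ge s_0\}$ and a barrier comparison gives $\|e^{\gamma s}\phi\|_{L^\infty(s\ge s_0)}\le C\|e^{\gamma s}h\|_{L^\infty}+C\|\phi\|_{L^\infty(\{s=s_0\})}$. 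The estimate then follows by contradiction exactly as in Lemma~\ref{lemma-apriori-sigmar}: a normalized violating sequence $\phi_k,h_k$ ($\|e^{\gamma s}\phi_k\|=1$, $\|e^{\gamma s}h_k\|\to0$) converges locally, by interior and boundary Schauder estimates using $\phi_k(0,\cdot)=0$, to a nonzero $\phi_\infty$ with $L_0\phi_\infty=0$, $\phi_\infty(0,\cdot)=0$, $|\phi_\infty|\le e^{-\gamma s}$; but by the previous paragraph each Fourier mode of $\phi_\infty$ is a multiple of $\Psi_n^-$ vanishing at $s=0$, hence zero, a contradiction (the barrier also shows the $L^\infty$ mass cannot escape to $s=\infty$). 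Taking $h=0$ gives uniqueness and therefore linearity of the eventual operator. For existence I would solve, for $R$ large, the truncated Dirichlet problem $L_0\phi_R=h$ on $(0,R)\times S^1$ with $\phi_R=0$ on $s=0$ and $s=R$; the homogeneous version has only the trivial solution (same barrier-plus-compactness argument, now using $\phi_R(R,\cdot)=0$), so $\phi_R$ exists by the Fredholm alternative, satisfies the same weighted bound uniformly in $R$, and converges locally as $R\to\infty$ to the desired $\phi$; estimate \eqref{eq linear ends3}'s bound is inherited in the limit.

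The main obstacle is the uniformity in $n$ underlying both parts. The decay rates of the mode solutions are not enhanced by $n$ as $s\to\infty$ — the competition between $|A|^2\sim\frac1{2s}$ and $n^2a\sim\frac{n^2}{2s}$ means the apparent "$n^2$-gain" degenerates at infinity — so one cannot simply sum mode-by-mode $L^\infty$ estimates, and the final $L^\infty$ bound must be obtained from the barrier/compactness argument rather than by series summation. The point that makes the compactness step work is precisely the uniform sign $V_n\le-\tfrac14+o(1)$: it makes $e^{-\gamma s}$ a simultaneous supersolution for all modes and makes the asymptotic-integration analysis of the first paragraph quantitatively $n$-independent, so that the limit $\phi_\infty$ is genuinely controlled mode by mode in the $e^{-\gamma s}$ norm. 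Establishing this uniformity, together with the Schauder compactness up to the inner boundary $s=0$, is where the real work lies.
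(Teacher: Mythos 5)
Your overall architecture -- Fourier decomposition, asymptotic analysis of each radial ODE, a weighted barrier for $s$ large, and a compactness argument -- matches the paper's, but the way you propose to close the argument has a genuine gap, located exactly where the paper's actual proof deviates from yours.

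You normalize by $\phi(0,\cdot)=0$ and then prove the a priori estimate (and hence uniqueness, hence solvability of the truncated Dirichlet problem by Fredholm) through a compactness argument that hinges on the following claim: every Fourier mode of a bounded homogeneous solution with $\phi_k(0)=0$ and $e^{-\gamma s}$ decay is a multiple of the decaying kernel element $\Psi_k^-$, and $\Psi_k^-(0)\ne 0$, so the mode vanishes. For $|k|\ge k_0$, $\Psi_k^-(0)\ne 0$ does follow, because for those modes $e^{-\gamma s}$ is a \emph{global} supersolution of the mode operator (this is exactly what the paper proves in \eqref{super}, using $a(s)>0$ for all $s\ge 0$ so that the $-a(s)k^2$ term dominates): the maximum principle then forces $\Psi_k^-\equiv 0$ if it vanishes at $s=0$. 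But for the finitely many low modes $|k|<k_0$, the barrier $e^{-\gamma s}$ is a supersolution only for $s\ge s_0$, and whether $\Psi_k^-(0)=0$ for some such $k$ is an untamed spectral coincidence about a specific ODE on a half-line; nothing in your outline rules it out. Your proposed fixes do not repair this: ``translating the base point $s=0$'' is not an available move, since the coefficients $a,b,|A|^2$ depend on $s$ and the endpoint is fixed by the geometric construction; and imposing ``an extra orthogonality condition'' would change the conclusion of the proposition (the statement requires a right inverse on the full space of decaying $h$, with no projection). If an exceptional mode exists, the a priori estimate you write down is simply false as stated (take $h=0$ and $\phi=\Psi_k^-(s)e^{ik\theta}$), and the Fredholm step collapses with it.

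The paper sidesteps this entirely by \emph{not} imposing $\phi_k(0)=0$ on the low modes. In Lemma~\ref{sol each mode} it constructs, mode by mode for $|k|<k_0$, a decaying solution by variation of parameters (with no boundary condition at $s=0$), paying a mode-dependent constant $C_k$ that is harmless since there are only finitely many such $k$. Only for $|k|\ge k_0$ does it use the boundary condition $\phi_k(0)=0$ together with the global supersolution, and the a priori estimate of Lemma~\ref{lemma a priori big k} is stated only for functions whose low Fourier modes vanish -- precisely so that the compactness argument never has to confront the low-mode degeneracy you are implicitly assuming away. The remaining technical point the paper must then address, and which your proposal also underestimates, is summing the high-mode solutions: the paper first assumes $|h_{\theta\theta}|\lesssim e^{-\gamma s}$ so that $|h_k|\lesssim k^{-2}e^{-\gamma s}$, applies the uniform estimate to partial sums $\phi_m$, extracts a limit, and then removes the extra regularity by mollification. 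Your alternative of truncating in $s$ rather than in Fourier index is an attractive way to avoid the mollification step, but it too runs through the same a priori estimate, so it is blocked by the same missing non-degeneracy. To repair the proposal, drop the normalization $\phi(0,\cdot)=0$ on the modes $|k|<k_0$ and handle them individually as in Lemma~\ref{sol each mode}; once you do that, you are essentially reproducing the paper's proof.
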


For the proof of this result, we will write $h$ and $\phi$ in Fourier series
\begin{align}
\label{fourier}
\phi(s,\theta) = \sum_{k\in \Z} \phi_k(s) e^{i k \theta} ,\quad
h(s,\theta) = \sum_{k\in \Z} h_k(s) e^{i k \theta} .
\end{align}
Then, if $\phi$ is smooth with exponential decay, equation \eqref{eq linear ends3} is equivalent to
\begin{align}
\label{eq phi k}
\phi_k'' + b(s) \phi_k' + (|A|^2-a(s) k^2 ) \phi_k = h_k,
\quad \text{for all } s >0 ,
\end{align}
for all $k\in\Z$. We need a couple of lemmas before starting  the proof of Proposition~\ref{prop inverse L0}. Combined, they allow us to deal with the low modes, i.e $|k| \leq k_0$ for some fixed $k_0$, and focus our attention on the higher frequencies.

\begin{lemma}
\label{sol each mode}
Let $0<\gamma<1$. If $|h_k(s)|\leq C e^{-\gamma s}$, then  \eqref{eq phi k} has a unique solution $\phi_k$ with
$$
\| e^{\gamma s}\phi_k\|_{L^\infty} \leq C_k \| e^{\gamma s} h_k\|_{L^\infty} .
$$
\end{lemma}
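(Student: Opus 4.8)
The plan is to analyze the scalar ODE \eqref{eq phi k} by studying the behavior of its solutions as $s\to\infty$ and invoking a variation-of-parameters construction of a decaying solution. First I would examine the two linearly independent homogeneous solutions. Using the asymptotics \eqref{asymp coeff L0}, namely $b(s) = 1 + O(s^{-1/2})$ and $|A|^2 - a(s)k^2 = \frac{1-k^2}{2s} + O(s^{-3/2})$, the equation is, to leading order, $\phi_k'' + \phi_k' + \frac{1-k^2}{2s}\phi_k = 0$. A standard WKB/Frobenius-at-infinity analysis shows that one homogeneous solution decays like $e^{-s}$ times a subexponential factor (a negative power of $s$), while the other grows like a subexponential factor (a power of $s$, since the $\phi_k'$ term forces the second root of the indicial-type behavior to be bounded, modulated by the $1/s$ coefficient). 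Thus for each fixed $k$ there is a one-dimensional space of homogeneous solutions decaying at least like $e^{-s}$, hence faster than $e^{-\gamma s}$ since $\gamma<1$.

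Next I would build the particular solution by variation of parameters, integrating the decaying homogeneous solution against $h_k$ from $s$ to $\infty$ and the growing one against $h_k$ from $0$ to $s$, arranged so that the resulting $\phi_k$ decays. The weighted bound $\|e^{\gamma s}\phi_k\|_{L^\infty}\le C_k\|e^{\gamma s}h_k\|_{L^\infty}$ then follows from estimating the two convolution integrals: the contribution from the decaying kernel against $e^{-\gamma s}$ data is controlled because the kernel decays faster than $e^{-\gamma s}$; the contribution from the growing kernel is controlled because it is integrated against the rapidly decaying partner solution. One must be slightly careful near $s=0$, where the transition operator $\tilde L$ and the coefficients are merely smooth and bounded — but on any compact interval this is a regular ODE and poses no difficulty; the weight $e^{\gamma s}$ is harmless there. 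For uniqueness: any two solutions with the stated decay differ by a homogeneous solution that decays like $e^{-\gamma s}$; since the growing homogeneous solution is genuinely unbounded (or at least not $e^{-\gamma s}$-decaying), the difference must be a multiple of the decaying homogeneous solution, and that multiple is pinned down by requiring decay — more precisely, by the normalization built into the variation-of-parameters formula (integrating the growing solution from $0$ with a fixed boundary behavior). I would phrase uniqueness as: the solution is the unique one in the weighted space $\{e^{\gamma s}\phi_k\in L^\infty\}$, which holds because the homogeneous solution space intersected with that weighted space is one-dimensional and the inhomogeneous data fixes the coefficient.

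The main obstacle I anticipate is pinning down the precise asymptotic behavior of the two homogeneous solutions of \eqref{eq phi k} — in particular, confirming that the "growing" solution really fails to lie in the weighted space (so that uniqueness holds) and that the "decaying" one decays strictly faster than $e^{-\gamma s}$ uniformly enough to make the convolution estimate close. This requires a careful treatment of the $O(s^{-1/2})$ and $O(s^{-3/2})$ error terms in \eqref{asymp coeff L0}, e.g. via a substitution $\phi_k = e^{-s/2}\psi_k$ to symmetrize, reducing to $\psi_k'' + (\text{potential} \to -1/4)\psi_k = \tilde h_k$, where the potential tends to the constant $-\tfrac14$ and the two fundamental solutions behave like $e^{\pm s/2}$ up to subexponential corrections; then $\gamma<1$ is exactly what guarantees $e^{-\gamma s}$ sits strictly between $e^{-s}$ (roughly) and the growth rate. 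The dependence of the constant $C_k$ on $k$ is allowed to be arbitrary here (the uniform-in-$k$ estimate is deferred to the higher-frequency analysis in the following lemma), which simplifies matters considerably: I only need the construction to work mode by mode.
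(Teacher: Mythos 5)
Your plan matches the paper's proof: both analyze the two homogeneous solutions of \eqref{eq phi k} at infinity (one behaving like $s^{b_k}e^{-s}$ with $b_k=(1-k^2)/2$, the other like $s^{-b_k}$) and then assemble a decaying solution by variation of parameters, reading off the weighted bound from the decay of the Wronskian and the kernel; the paper also mentions, without detail, your alternative supersolution-based construction. Two small caveats: for $k\le 1$ the ``growing'' homogeneous solution does not in fact grow (it decays like $s^{-1/2}$ for $k=0$ and is bounded for $k=1$) — your parenthetical remark that it is merely not $O(e^{-\gamma s})$ is what actually matters — and requiring $e^{-\gamma s}$-decay alone does not select a unique solution, since the exponentially decaying kernel element $z_{2,k}\sim s^{b_k}e^{-s}$ is itself in the weighted space; as you anticipate, the ``uniqueness'' must be read as tied to the specific limits of integration in the variation-of-parameters formula, a point the paper's own proof does not spell out.
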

\begin{proof}

Note that as $s\to\infty$, equation \eqref{eq phi k} with $h_k=0$ is asymptotic to
\begin{align}
\label{homog eq k}
\phi_k''+\phi_k'+ \frac{b_k}s \phi_k=0 ,
\end{align}
where $b_k = \frac{1-k^2}{2}$. Note that $b_0=\frac12$, $b_1=0$, and $b_k<0$ for $k>1$.
We see that the homogeneous equation \eqref{homog eq k} has two independent solutions with the behaviors $s^{-b_k}(1+O(s^{-1}))$ and $s^{b_k} e^{-s} (1+O(s^{-1}) ) $ as $s\to\infty$. Using these solutions, it is possible to construct, for each $k\geq 0$, two elements $z_{1,k}$ and $z_{2,k}$ in the kernel of the operator $\phi'' + b(s) \phi' + (|A|^2-a(s) k^2 ) \phi$ such that
$$
z_{1,k}(s) = s^{-b_k}(1+O(s^{-\sigma})), \quad
z_{2,k}(s) =s^{b_k} e^{-s} (1+O(s^{-\sigma}) ) ,
$$
as $s\to \infty$, where $\sigma\in(0,1)$.
Now we can construct the solution $\phi_k$ using the variation of parameters formula:
$$
\phi_k =  - z_{1,k} \int \frac{h_k z_{2,k}}{W_k}+z_{2,k} \int \frac{h_k z_{1,k}}{W_k},
$$
where $W_k = z_{1,k}z_{2,k}'- z_{1,k}'z_{2,k} = e^{-s} (1+O(s^{-\sigma}))$ and the integrals are chosen to have the desired decay.
An alternative for the construction is to use the super solution $\bar\phi = e^{-\gamma s}$ and the calculation as in Lemma~\ref{lemma a priori big k}.
\end{proof}

Let $k_0\in \N$.
Let $\phi$ be a bounded measurable function on $[0,\infty)\times[0,2\pi]$. We will say that the Fourier coefficients of $\phi$ of order less than $k_0$ vanish if
$$
\int_0^{2\pi} \phi(s,\theta)e^{-ik\theta} \,d\theta=0 ,
\quad \forall s>0,  \ \forall |k|< k_0.
$$
\begin{lemma}
\label{lemma a priori big k}
There is a $k_0$ with the following property: Suppose that $\phi$ is continuous, satisfies  \eqref{eq linear ends3}, with $\phi$ and $h$ $2\pi$-periodic in $\theta$, that
\begin{gather*}
\phi(0,\theta)=0\quad \forall \theta\in[0,2\pi], \\
\|e^{\gamma s}\phi\|_{L^\infty}<\infty, \quad \|e^{\gamma s}h\|_{L^\infty}<\infty,
\end{gather*}
and that the Fourier coefficients of order less than $k_0$ of $\phi$ and $h$ are zero, then there is a constant $C$  independent of $\phi$ and $h$ such that
$$
\|e^{\gamma s}\phi\|_{L^\infty} \leq C\|e^{\gamma s}h\|_{L^\infty}.
$$

\end{lemma}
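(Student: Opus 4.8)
The plan is to prove a weighted a priori estimate for the high-frequency part of $\phi$ by constructing an explicit supersolution and invoking the maximum principle, after throwing away the finitely many problematic low modes. First I would fix $\gamma<a<1$ and look for a supersolution of the form $\bar\phi(s,\theta) = e^{-\gamma s}$. Applying $L_0$ to $\bar\phi$ gives $L_0 \bar\phi = (\gamma^2 - b(s)\gamma + |A|^2)e^{-\gamma s} + \tilde L \bar\phi$. From the asymptotics \eqref{asymp coeff L0}, $b(s)\to 1$ and $|A|^2\to 0$, so the bracket tends to $\gamma^2-\gamma<0$; hence for $s\geq s_1$ large we get $L_0\bar\phi \leq -c\, e^{-\gamma s}$ for some $c>0$ (the term $\tilde L\bar\phi$ is supported in $s\in[0,10\delta_s]$ and multiplied by $o(1)$ coefficients as $\ve\to0$, so it is harmless in the asymptotic region and can be absorbed). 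On the compact region $s\leq s_1$ the difficulty is that $L_0$ acting on $e^{-\gamma s}$ need not have a sign, so I would instead work with the angular decomposition: for a function whose Fourier coefficients of order $<k_0$ vanish, on each mode $k$ with $|k|\geq k_0$ the zeroth-order coefficient of the operator in \eqref{eq phi k} is $|A|^2 - a(s)k^2$, and since $a(s)>0$ for all $s\geq 0$ with $a(s)\geq c_0/(1+s)$ and $|A|^2$ bounded, for $k_0$ large enough this coefficient is negative and bounded away from zero for all $s\geq 0$. This makes $L_0$ satisfy a maximum principle on the full half-strip when restricted to the high modes.

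Concretely, the steps are: (1) Choose $k_0$ so that $|A|^2 - a(s)k^2 \leq -\delta <0$ for all $s\geq 0$ and all $|k|\geq k_0$, using the uniform positivity of $a$ and boundedness of $|A|^2$. (2) Observe that the hypothesis that the Fourier coefficients of $\phi$ and $h$ of order less than $k_0$ vanish means $\phi$ and $h$ live in the orthogonal complement of $\mathrm{span}\{e^{ik\theta}: |k|<k_0\}$, and this subspace is preserved by $L_0$ (since $L_0$ has $\theta$-independent coefficients in its principal and lower-order parts up to the $\tilde L$ perturbation, which I would also arrange to respect the decomposition, or handle separately since its coefficients are $o(1)$). (3) On this subspace, build the comparison function $\psi = A\|e^{\gamma s}h\|_{L^\infty}\, e^{-\gamma s}$; verify $L_0\psi \leq -\|h\|\,e^{-\gamma s} \leq -|h| = L_0(\pm\phi)$ wherever the sign of the zeroth-order term plus the estimate above gives a clean supersolution — on the high modes the negative zeroth-order coefficient only helps. (4) Compare: $\psi \pm \phi$ vanishes at $s=0$, tends to $0$ as $s\to\infty$ (both are $O(e^{-\gamma s})$), and satisfies $L_0(\psi\pm\phi)\leq 0$ with a nonpositive zeroth-order coefficient, so the maximum principle gives $|\phi|\leq \psi$, i.e. $\|e^{\gamma s}\phi\|_{L^\infty}\leq A\|e^{\gamma s}h\|_{L^\infty}$.

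The main obstacle I anticipate is making the maximum principle argument rigorous \emph{uniformly on the unbounded strip} rather than just in the asymptotic region. The operator $L_0$ has a zeroth-order term $|A|^2 - a(s)k^2$ which, for the low modes, is \emph{positive} near $s=0$ (e.g. $b_0 = 1/2>0$ in the homogeneous model \eqref{homog eq k}), so the naive maximum principle fails; this is precisely why the lemma hypothesizes that the low Fourier modes vanish. The delicate point is therefore to choose $k_0$ correctly: I need $a(s)k_0^2 > \sup_s |A|^2$ uniformly, which requires the lower bound $a(s)\geq c/(1+s)$ together with $|A|^2 = O(1/(1+s))$ — both follow from \eqref{asymp coeff L0} and the smoothness of $a, |A|^2$ on $[0,\infty)$, but one must check there is genuinely a uniform gap. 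A secondary technical nuisance is the perturbation $\tilde L$ with $o(1)$ coefficients supported in $s\in[0,10\delta_s]$: since this region is compact and the coefficients vanish as $\ve\to0$, for $\ve$ small it is dominated by the negative zeroth-order term coming from the high modes, but this domination needs to be quantified. Once these are in place, the variation-of-parameters solutions $z_{1,k}, z_{2,k}$ from Lemma~\ref{sol each mode} furnish existence for each mode and the estimate above gives the uniform bound, which can then be summed (or rather, recombined via Parseval and elliptic regularity) to conclude Proposition~\ref{prop inverse L0}.
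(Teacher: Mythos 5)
Your plan is a direct a priori estimate by supersolution, which is a genuinely different strategy from the paper's: the paper argues by contradiction/compactness, extracts a nontrivial limit solving the homogeneous equation, and only then goes mode-by-mode. Unfortunately your direct argument has a gap that the paper's approach is designed to avoid.

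The problem is in step (4). You compare $\phi$ with the $\theta$-independent barrier $\psi(s) = A\|e^{\gamma s}h\|_{L^\infty}e^{-\gamma s}$ and claim that the maximum principle applies because of ``a nonpositive zeroth-order coefficient.'' But for the 2D operator $L_0 = \partial_{ss}+a(s)\partial_{\theta\theta}+b(s)\partial_s+|A|^2$, the zeroth-order coefficient is $|A|^2$, which is \emph{positive} for all $s$. The favorable term $-a(s)k^2$ you are counting on does not appear when $L_0$ acts on a $\theta$-independent function: $a(s)\partial_{\theta\theta}\psi\equiv 0$, so $L_0\psi = (\gamma^2-b(s)\gamma+|A|^2)\psi$, and near $s=0$ this bracket need not be negative. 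So $\psi$ is not a supersolution on the full half-strip, and the 2D comparison $|\phi|\le\psi$ does not follow. The high-mode hypothesis on $\phi$ cannot rescue a 2D comparison against a $\theta$-independent $\psi$, because applying $L_0$ to $\psi\pm\phi$ mixes the $k=0$ mode (from $\psi$) with the modes $|k|\ge k_0$ (from $\phi$), and the ``effective'' zeroth-order coefficient $|A|^2-a(s)k^2$ is a mode-dependent object, not a coefficient of a single elliptic operator to which the pointwise maximum principle applies.

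The mode-by-mode version of your argument is correct as far as it goes: for $|k|\ge k_0$ one has $|A|^2-a(s)k^2\le-\delta$ uniformly in $s\ge0$ (as you argue, using the uniform positivity of $a$ and the boundedness of $|A|^2$; this is exactly \eqref{super} in the paper), so the ODE maximum principle gives $|\phi_k(s)|\le C\,e^{-\gamma s}\|e^{\gamma s}h_k\|_{L^\infty}\le C\,e^{-\gamma s}\|e^{\gamma s}h\|_{L^\infty}$ for each $k$. But this bounds each mode separately by a $k$-independent quantity, so you cannot sum over $k$ to bound $\phi(s,\theta)=\sum_{|k|\ge k_0}\phi_k(s)e^{ik\theta}$ pointwise: the series $\sum_k e^{-\gamma s}\|e^{\gamma s}h\|_{L^\infty}$ diverges. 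Re-summation would require decay of $\|h_k\|$ in $k$, i.e.\ angular regularity of $h$ beyond $L^\infty$ — precisely the extra hypothesis \eqref{2der} that the paper assumes (and later removes by mollification) in the proof of Proposition~\ref{prop inverse L0}, but which is not available in the a priori estimate of the Lemma.

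The paper sidesteps this by a two-stage argument. First, it uses the barrier $e^{-\gamma s}$ only on $\{s\ge s_0\}$ with $s_0$ large, where the asymptotics \eqref{asymp coeff L0} make $\gamma^2-b(s)\gamma+|A|^2\le\frac{\gamma^2-\gamma}{2}<0$ uniformly in $\theta$ with no mode restriction needed, to reduce the estimate to a bound on the compact part $\{s\le s_0\}$. Second, if the estimate failed along a sequence, compactness on $\{s\le s_0\}$ produces a nonzero bounded limit $\phi$ solving the homogeneous equation $L_0\phi=0$ with all low modes vanishing; the mode-by-mode ODE comparison with $e^{-\gamma s}$ (via \eqref{super}) then kills each $\phi_k$, $|k|\ge k_0$, individually — and since the conclusion is just $\phi_k\equiv0$ for every mode, no re-summation is needed. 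If you want a direct proof without compactness, you would need a genuinely $\theta$-dependent supersolution or an $L^2_\theta$-based Moser-type argument; the scalar $e^{-\gamma s}$ alone will not do. Also, a minor slip: $\psi\pm\phi$ does not vanish at $s=0$; it equals $\psi(0)>0$, which is the right sign, but is worth stating correctly.
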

\begin{proof}
We proceed by contradiction. Assume that the statement fails, so that there exist sequences $\phi_n$, $h_n$ such that $\phi_n$, $h_n$ are $2\pi$-periodic in $\theta$, with vanishing Fourier coefficients of order less than $k_0$ ($k_0$ will be fixed later), $\phi_n$ solves \eqref{eq linear ends3} with right hand side $h_n$, $\phi_n(0,\theta)=0$ for $\theta\in[0,2\pi]$, and
\begin{align}
\label{conv}
\|e^{\gamma s}\phi_n\|_{L^\infty}=1, \quad
\|e^{\gamma s}h_n\|_{L^\infty}\to0, \quad \text{as }n\to\infty.
\end{align}

Consider $\bar\phi(s) = e^{-\gamma s}$. By \eqref{asymp coeff L0} we see that
$$
L_0 \bar\phi = (\gamma^2-\gamma)e^{-\gamma s}(1+O(s)) \leq \frac{\gamma^2-\gamma}{2}e^{-\gamma s}
$$
for $s\geq s_0$ (here we fix $s_0>0$ large).
Using the maximum principle with $\bar\phi + \sigma e^{-\tilde \gamma s}$ with $0<\tilde \gamma<\gamma$ and $\sigma>0$, and then letting $\sigma\to0$, we obtain
$$
\|e^{\gamma s}\phi_n\|_{L^\infty( (s_0,\infty)\times[0,2\pi]) ) }
\leq C\|e^{\gamma s}h_n\|_{L^\infty} + C \|e^{\gamma s}\phi_n\|_{L^\infty( (0,s_0)\times[0,2\pi]) )}.
$$
From this and \eqref{conv}, we deduce that for a subsequence (also denoted $\phi_n$)
$$
\|e^{\gamma s}\phi_n\|_{L^\infty( (0,s_0)\times[0,2\pi]) )}\geq c_0
$$
for some constant $c_0>0$. By standard elliptic estimates, up to a new subsequence, $\phi_n \to \phi$ uniformly on compact subsets of $[0,\infty)\times[0,2\pi]$ and by the previous remark, $\phi\not\equiv0$. But because of \eqref{conv}, $\phi$ satisfies
$$
L_0 \phi = 0\quad \text{in } (0,\infty)\times[0,2\pi],
$$
with $\phi(0,\theta)=0$ and $|\phi(s)|\leq e^{-\gamma s}$.
Let us write $\phi$ as a Fourier series \eqref{fourier}. Due to the hypotheses, $\phi_k=0$ for $|k|<k_0$. Note that $\phi_k$ satisfies \eqref{eq phi k} with right-hand side equal to 0 and $\phi_k(0)=0$.
The function $\bar\phi(s)=e^{-\gamma s}$  satisfies
$$
\bar\phi'' + b(s) \bar\phi' + (|A|^2-a(s) k^2 ) \bar\phi =
\Big(\gamma^2-\gamma b(s) +  |A|^2-a(s) k^2  \Big) e ^{-\gamma s}  .
$$
But from the fact that $a(s)>0$ for all $s\geq 0$ and the estimates on the coefficients \eqref{asymp coeff L0}, we see that there is $k_0$ such that
\begin{align}
\label{super}
\bar\phi'' + b(s) \bar\phi' + (|A|^2-a(s) k^2 ) \bar\phi < - c_k e^{-\gamma s},
\quad \forall |k|\geq k_0, \forall s\geq 0,
\end{align}
where $c_k>0$.
From the maximum principle, we deduce that $\phi_k\equiv0$ for all $|k|\geq k_0$. This is a contradiction.
\end{proof}

\begin{proof}[Proof of Proposition~\ref{prop inverse L0}]
Let $k_0$ be as in Lemma~\ref{lemma a priori big k}.
Using Lemma~\ref{sol each mode}, for each $|k|<k_0$, we find a solution $\phi_k$ of \eqref{eq phi k} in $(0,\infty)$. Then we need to prove the lemma only under the assumption that the Fourier coefficients of order less than $k_0$ of $h$ vanish.

For the moment let us assume in addition that $h$ is $C^2$  and
\begin{align}
\label{2der}
|h_{\theta\theta}(s,\theta)|\leq C e^{-\gamma s} \quad \text{for all }(s,\theta)\in [0,\infty)\times [0,2\pi].
\end{align}
Writing $h$ in Fourier series as in \eqref{fourier}
for $|k|\geq k_0$ we can find a solution $\phi_{k}$ of \eqref{eq phi k}  in $(0,\infty)$ with right hand side $h_{k}$, satisfying $\phi_{k}(0)=0$. This can be done using the supersolution  $\bar\phi(s)=e^{-\gamma s}$ and \eqref{super}. Alternatively, one can use the variation of parameters formula and elements in the kernel as in Lemma~\ref{sol each mode}.
For $m>k_0$, let
$$
\phi_{m}(s,\theta) = \sum_{k_0 \leq |k|\leq m} e^{i k \theta} \phi_{k}(s)
$$
and similarly define $h_m$. By Lemma~\ref{lemma a priori big k},
$$
\|e^{\gamma s} \phi_{m}\|_{L^\infty} \leq C
\|e^{\gamma s} h_{m}\|_{L^\infty} ,
$$
with $C$ independent of $m$.
Note that
\begin{align*}
|h_k(s)|&= \frac{1}{2\pi}\left|\int_0^{2\pi} h(s,\theta) e^{ik\theta}\,d\theta\right|=\frac{1}{2\pi k^2}\left|\int_0^{2\pi} h_{\theta\theta}(s,\theta) e^{ik\theta}\,d\theta\right|\leq \frac{C}{k^2}e^{-\gamma s} .
\end{align*}
Then
\begin{align*}
|h_m(s,\theta)| &= \left|\sum_{k_0\leq |k|\leq m} h_k(s)e^{i k \theta}\right|\leq \sum_{k_0\leq |k|\leq m} |h_k(s)| \leq C e^{-\gamma s}  \sum_{k_0\leq |k|\leq m} k^{-2}.
\end{align*}
Therefore
$$
\|e^{\gamma s} \phi_{m}\|_{L^\infty} \leq C,
$$
with $C$ independent of $m$. By standard elliptic estimates, for a subsequence $m\to\infty$ we find $\phi_m\to\phi$ uniformly on compact subsets of $[0,\infty)\times[0,2\pi]$ and $\phi$ is the desired solution.

Next, we lift the assumption \eqref{2der}. Indeed, assume only $\|e^{\gamma s} h\|_{L^\infty}<\infty$ and that the Fourier coefficients of $h$ vanish for $|k|<k_0$. Let $\rho_n$ be a sequence of mollifiers in $\R^2$ and
$$
h_n = h*\rho_n
$$
(extending $h$ by 0 for $s\leq 0$). We have $h_n\to h$ a.e; the Fourier coefficients of order less than $k_0$ of $h_n$ vanish and
$$
|h_n(s,\theta)|\leq C e^{-\gamma s},
$$
$$
|\partial_{\theta\theta} h_n(s,\theta)|\leq C_n e^{-\gamma s},
$$
Using the previous argument we find a solution $\phi_n$ with
$$
\|e^{\gamma s}\phi_n\|_{L^\infty}\leq C \|e^{\gamma s}h_n\|_{L^\infty}\leq C,
$$
with $C$ independent of $n$ and right-hand side $h_n$. Passing to a subsequence, we find the desired solution.
\end{proof}

\subsection{Linear theory on the bounded piece}
\label{ssec:Linear theory bounded}

Let us consider the bounded component $\mathcal E_b $ of  $\mathcal E = \mathcal M \setminus \{ s \leq \frac{\delta_s}{3\ve} \}$ and let $\tilde{\mathcal E}_b = \ve \mathcal E_b$.

\begin{lemma}
\label{solvability bounded component}
For  $h \in C^\alpha(\overline{\tilde{\mathcal E}_b})$, there is a unique solution $\phi$ of
\begin{align}
\label{eq bounded part}
\left\{
\begin{aligned}
\Delta  \phi + |A|^2 \phi + \partial_z \phi & = h \quad \text{in } \tilde{\mathcal E}_b
\\
\phi & = 0 \quad \text{on } \partial \tilde{\mathcal E}_b .
\end{aligned}
\right.
\end{align}
with
\begin{align}
\label{est}
\|\phi\|_{L^\infty} \leq C \|h\|_{L^\infty},
\end{align}
where the constant $C$ is independent of $h$ and $\phi$.
\end{lemma}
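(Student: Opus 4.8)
\emph{The plan.} I would reduce the Dirichlet problem \eqref{eq bounded part} to one for an operator obeying the maximum principle, by factoring out a positive approximate kernel element of $\mathcal L:=\Delta+|A|^2+\partial_z$. The obstruction to arguing directly on $\mathcal L$ is its \emph{nonnegative} zeroth order coefficient $|A|^2$, which on the order-one domain $\tilde{\mathcal E}_b$ is not small; so neither the maximum principle nor coercivity is available for $\mathcal L$ itself.

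\textbf{Step 1 (a positive approximate kernel element).} By the construction of Section~\ref{sec:construction of M}, on $\{s\ge\delta_s/(3\ve)\}$ the surface $\mathcal M$ agrees, up to a normal graph of an exponentially small function, with a rotationally symmetric self-translating surface; the bounded component corresponds to the inner region of the travelling paraboloid $\mathcal P$, with a neighborhood of its bounding circle removed. On that region $\mathcal P$ is a strictly graphical convex bowl meeting the Scherk part transversally, so the function $\psi:=\pm\,\nu\cdot e_z$, with the sign chosen so that $\psi>0$, satisfies $c_0\le\psi\le c_0^{-1}$ on $\tilde{\mathcal E}_b$ for a constant $c_0>0$ independent of $\ve$. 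Differentiating the defect $E=H-\ve\,\nu\cdot e_z$ along the vertical translation symmetry (which preserves both $H$ and $\nu$) gives $\mathcal L(\nu\cdot e_z)=-\partial_z E$, so by Proposition~\ref{prop error} — according to which $E$ and its derivatives decay like $\ve e^{-\gamma s}$ on $\{s\ge\delta_s/(3\ve)\}$, and vanish where $\mathcal M$ is an exact self-translator — the remainder $r_\ve:=\mathcal L\psi$ satisfies $\|r_\ve\|_{L^\infty(\tilde{\mathcal E}_b)}\to0$ as $\ve\to0$.

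\textbf{Step 2 (substitution and solvability).} Writing $\phi=\psi v$ and using $\partial_z\phi=\nabla(z|_{\tilde{\mathcal E}_b})\cdot\nabla\phi$, one computes
$$\mathcal L(\psi v)=\psi\,\bigl(\Delta v+\nabla w\cdot\nabla v\bigr)+v\,\mathcal L\psi,\qquad w:=2\log\psi+z\big|_{\tilde{\mathcal E}_b},$$
so \eqref{eq bounded part} becomes: find $v\in H^1_0(\tilde{\mathcal E}_b)$ with
$$\nabla\cdot\bigl(e^{w}\nabla v\bigr)+\frac{e^{w}}{\psi}\,r_\ve\,v=\frac{e^{w}}{\psi}\,h\inn\tilde{\mathcal E}_b,\qquad v=0\onn\partial\tilde{\mathcal E}_b.$$
The weight $e^{w}=\psi^{2}e^{z}$ is smooth and bounded above and below by positive constants, and $\|r_\ve/\psi\|_{L^\infty}\to0$, so for $\ve$ small the bilinear form $\int e^{w}\nabla v\cdot\nabla\varphi-\int (e^{w}/\psi)r_\ve v\varphi$ on $H^1_0(\tilde{\mathcal E}_b)$ is coercive by Poincar\'e's inequality. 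Lax--Milgram produces a unique weak solution $v$; since $h\in C^{\alpha}$ and the metric is smooth, Schauder estimates make $v$, hence $\phi=\psi v$, a classical $C^{2,\alpha}$ solution, which gives existence and uniqueness.

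\textbf{Step 3 ($L^\infty$ bound) and the main difficulty.} In non-divergence form $v$ solves $\Delta v+\nabla w\cdot\nabla v+(r_\ve/\psi)v=h/\psi$ with $v=0$ on $\partial\tilde{\mathcal E}_b$; this is a uniformly elliptic operator on a bounded domain with bounded drift and a zeroth order coefficient of arbitrarily small supremum norm (for $\ve$ small), hence obeys the maximum principle, and a standard barrier argument yields $\|v\|_{L^\infty}\le C\|h/\psi\|_{L^\infty}\le C\|h\|_{L^\infty}$, with $C$ depending only on $c_0$, the diameter of $\tilde{\mathcal E}_b$, $\|\nabla w\|_{L^\infty}$, and a (uniform) lower bound for the first Dirichlet eigenvalue of $\tilde{\mathcal E}_b$. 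Then $\|\phi\|_{L^\infty}=\|\psi v\|_{L^\infty}\le c_0^{-1}\|v\|_{L^\infty}\le C\|h\|_{L^\infty}$, with $C$ independent of $h$, $\phi$, and (for $\ve$ small) of $\ve$. The only genuinely delicate point is Step~1: locating the positive function $\psi$ and certifying that $\mathcal L\psi$ is negligible; once that is in hand, the rest is routine.
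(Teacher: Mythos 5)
Your proposal is correct and rests on the same key insight as the paper: on $\tilde{\mathcal E}_b$ the function $\nu\cdot e_z$ is bounded away from zero and lies in the approximate kernel of $\mathcal L=\Delta+|A|^2+\partial_z$, which is what restores a maximum-principle structure despite the nonnegative potential $|A|^2$. The implementation differs. The paper writes $L=L_0+\tilde L$ with $L_0$ the operator on the exact paraboloid $\mathcal P$ (for which $L_0\langle\nu_0,e_z\rangle=0$) and $\tilde L$ a second-order perturbation with $o(1)$ coefficients supported near the outer boundary; it then builds the explicit radial barrier $\bar\phi(r)=v(r)-\mu e^{-Kr}$, $v=\langle\nu_0,e_z\rangle$, $\mu=\tfrac12\inf v$, and verifies $\bar\phi\ge\mu>0$ while $L\bar\phi\le-c<0$ for $K$ large and $\ve$ small, obtaining the $L^\infty$ bound by comparison and existence by sub/super-solutions. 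You instead factor $\phi=\psi v$ with $\psi=\nu\cdot e_z$ to pass to a divergence-form operator with drift and an $o(1)$ zeroth-order term, then invoke Lax--Milgram for existence/uniqueness and the maximum principle for the bound. The substitution $\phi=\psi v$ is precisely the device implicit in any barrier argument of this type, so the two routes are essentially equivalent; yours makes the variational structure explicit at the cost of a Poincar\'e/eigenvalue input, whereas the paper's strict supersolution is self-contained. One caution on your Step~1: to estimate $r_\ve=\mathcal L\psi$ you differentiate the defect, writing $\mathcal L(\nu\cdot e_z)=-\partial_z E$, but Proposition~\ref{prop error} controls $E$ only in the $C^\alpha$ sense; it is cleaner, and sufficient, to argue as the paper does that $\mathcal L\psi=\tilde L\psi=o(1)$ directly, since on $\tilde{\mathcal E}_b$ the surface coincides with the exact paraboloid except for an exponentially small normal graph, so $\tilde L$ has small ($C^1$) coefficients. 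Also note that on the rescaled surface $\tilde{\mathcal E}_b$ the relevant defect is $E=H-\nu\cdot e_z$, without the factor $\ve$ that appears on $\mathcal M$.
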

\begin{proof}
%The paraboloid can be parametrized with $X_0$ as in \eqref{X0} where $\gamma_1(s),\gamma_3(s)$ solve \eqref{eq:self-trans-ode} for $s\in [0,\infty)$ and $s$ is arc length. We take $(\gamma_1(0),\gamma_3(0))$ to be the point that in expanded variables is at $s=R_{tr}+10$.
%The initial conditions for $\gamma_1,\gamma_3$ are
%$$
%\gamma_1(0) = 0, \quad\gamma_1'(0) = 1, \quad
%\gamma_3(0) >0, \quad\gamma_3'(0) = 0.
%$$
%The surface $\tilde{\mathcal E}_b$ is parametrized by \eqref{X} with $s\in [0,s_1]$, $\theta \in [0,2\pi]$. Here $s_1>0$, $f(s,\theta) = ... $.

Let $L\phi$ be $\Delta  \phi + |A|^2 \phi + \partial_z \phi$, where the geometric quantities and Laplacian are the ones for $\tilde{\mathcal E}_b$. We let $L_0$ denote the corresponding operator for the paraboloid $\mathcal P$. %$\Delta  \phi + |A|^2 \phi + \partial_z \phi$, where now the geometric quantities and the Laplacian are associated to the paraboloid $\mathcal P$.

We can parametrize $\mathcal P$ with polar coordinates
$$
X_0(r,\theta) = (r \cos(\theta),r\sin(\theta),F(r)) ,
$$
with $r\geq 0$, $\theta\in[0,2\pi]$, where $F=F_0$ is the unique radially symmetric solution of \eqref{eqFrad} with $F(0)=0$.
Then, in the coordinates $r$ and $\theta$,
$$
L_0 \phi = B_1(r) \phi_{rr} + B_2(r) \phi_r + \frac{1}{r^2} \phi_{\theta\theta} + A^2(r) \phi,
$$
where
\begin{align*}
B_1(r) & =  \frac{1}{1+F'(r)^2},
\\
B_2(r) & =
\frac{1}{r(1+F'(r)^2)}-\frac{F'(r)F''(r)}{(1+F'(r)^2)^2},
\\
A^2(r) & = \frac{F''(r)^2}{(1+F'(r)^2)^3} + \frac{F'(r)^2}{r^2(1+F'(r)^2)} .
\end{align*}
As before, we denote by $\nu_0$ the unit normal vector to $\mathcal P$ such that $\langle \nu_0, e_z \rangle >0$. The surface $\tilde{\mathcal E}_b$ can then be parametrized by
$$
X_0(r,\theta) + \nu_0(r,\theta) f({r}/{\ve},{\theta}/{\ve})
$$
for $r\in[0,R_1]$, $\theta\in [0,2\pi]$, with some $R_1>0,$ and where $f$ has the property that $f({r}/{\ve},{\theta}/{\ve})$ is supported where $r\in  [R_1-10\delta_s,R_1]$ and $f$ and its derivatives can be bounded by $e^{-\delta_s/(10\ve)}$.  This implies that
$$
L\phi= L_0 \phi + \tilde L \phi,
$$
where $\tilde L$ is a second order differential operator in $\phi$ with coefficients that are $o(1)$ as $\ve\to0$ and supported in $r \in  [R_1-10\delta_s,R_1]$.

Let $v (r) = \langle\nu_0 ,e_z\rangle$. Then $L_0 v= 0$ and $v(r)>0$ for all $r \geq 0$. We define now
$$
\bar \phi(r) = v(r) - \mu e^{-K r},
$$
where $\mu = \frac12 \inf_{r\in [0,R_1]} v(r)>0$ and $K>0$ is to be chosen. Then, we compute
\begin{align*}
L \bar \phi =
\mu e^{-K s} \left[ - B_1(r) K^2 + K  B_2(r) - A^2(r) \right] - \mu \tilde L (e^{-K s})+
\tilde L v.
\end{align*}
But $\tilde L v = O(\ve)$, $\tilde L (e^{-Kr}) = o(1) K^2 e^{-K r}$ where in the last expression $o(1)$ is uniform in $K$ as $\ve\to0$. Since $B_1(r)$ is positive in $[0,R_1]$ we can choose $K$ large so that
$$
L\bar \phi \leq -c , \quad \forall r\in[0,R_1],
$$
for some $c>0$ and all $\ve>0$ small. Then $\bar \phi$ is a super solution for the operator $L$ and hence the bound \eqref{est} holds. The equation is solved then by super and subsolutions.
\end{proof}

\begin{proof}[Proof of Proposition~\ref{prop linear ends}]
Let $0<\gamma<1$.

Let us consider first one of the unbounded ends $\mathcal E_u$ and $\tilde{\mathcal E}_u = \ve \mathcal E_u$.
%For a function $\phi$ defined on $\mathcal E_u$ let $\tilde\phi(x) = \phi(x/\ve)$, $x\in \tilde{\mathcal E}_u $ be the scaled function.
Recall that $\tilde{\mathcal E}_u$ is parametrized by \eqref{X}, in particular, $s\geq 0$, $\theta \in [0,2\pi)$ are global coordinates on this surface. We write $s(x)$ for $x \in \tilde{\mathcal E}_u$.
Using $s,\theta$ we may identify $\tilde{\mathcal E}_u $ with an unbounded piece of the paraboloid or catenoid. We write this piece as $E_u$.
% and functions on $\tilde{\mathcal E}_u$ will be identified with functions on part of the paraboloid.

%Consider the change of variables
%$$
%\phi(s,\theta) = \tilde \phi(s,\vartheta) , \qquad  \vartheta = \gamma_1(s)\theta.
%$$
Let $\alpha \in (0,1)$ and define the norms for functions defined on $E_u $:
\begin{align*}
\|h\|_{k,\alpha} = \sup_{x\in E_u} e^{\gamma s(x)} \|h\|_{C^{k,\alpha}(\overline B_1(x) )} ,
\end{align*}
where $B_1(x)$ is the geodesic ball of center $x$ and radius 1 in $E_u$.

If $h$ is defined on $E_u $ and $\|h\|_{0,\alpha}<\infty$, using Proposition~\ref{prop inverse L0}, we obtain a solution $\phi= T_0(h)$  of  \eqref{eq linear ends3}, $2\pi$-periodic in $\theta$, and such that
$$
\|e^{\gamma s}T_0(h)\|_{L^\infty}\leq C \|e^{\gamma s} h \|_{L^\infty} \leq C \|h\|_{0,\alpha}.
$$
By considering $\phi$ and $h$ as functions on $E_u$, equation  \eqref{eq linear ends3} becomes
$$
\Delta_{E_u} \phi + |A_{E_u}|^2\phi + \partial_z \phi = h \quad \text{on } E_u.
$$
Standard elliptic estimates, applied on geodesic balls of radius 1 of $E_u$, give that
$$
\|\phi\|_{2,\alpha} \leq C ( \|h\|_{0,\alpha} + \|\phi\|_{C^{2,\alpha}(\partial E_u)}
) .
$$
Note that $\partial E_u$ in the coordinates $s,\theta$ corresponds to $s=0$.
Consider the representations of $\phi$ and $h$ as Fourier series as in \eqref{fourier}. We recall that, by construction, if $\phi = T_0(h)$ then $\phi(0,\cdot)$ has Fourier modes of index $|k|\geq k_0$ equal to zero, where $k_0$ is a fixed integer. Hence
$$
\|\phi\|_{C^{2,\alpha}(\partial E_u)} \leq C \sum_{|k|\leq k_0} |\phi_k(0)|.
$$
The solutions $\phi_k$ are constructed in Lemma~\ref{sol each mode} and, in particular, we have $|\phi_k(0)|\leq C_k \|e^{\gamma s} h_k\|_{L^\infty}$. From this, we deduce
$$
\|\phi\|_{C^{2,\alpha}(\partial E_u)} \leq C \|e^{\gamma s} h\|_{L^\infty},
$$
and hence
\begin{align}
\label{est T0}
\|T_0(h)\|_{2,\alpha} \leq C  \|h\|_{0,\alpha}.
\end{align}

To solve equation \eqref{eq linear ends2}, we rewrite it
\begin{align}
\label{fixed1}
\Delta_{E_u} \phi + |A_{E_u}|^2\phi + \partial_z \phi = h + \tilde L\phi \quad \text{on } E_u,
\end{align}
where $\tilde L$ a second order elliptic equation with coefficients that are $o(1)$ as $\ve \to0$ and with compact support. This translates to
\begin{align}
\label{estim2}
\|\tilde L \phi\|_{0,\alpha} \leq o(1) \|\phi\|_{2,\alpha}.
\end{align}
Using the operator $T_0$, we can find a solution of \eqref{fixed1} by solving the fixed point problem
$$
\phi = T_0( h + \tilde L \phi)
$$
in the Banach space $\{ \phi \in C^{2,\alpha}(\overline E_u): \|\phi\|_{2,\alpha}<\infty\}$.
By \eqref{est T0}, \eqref{estim2} and the contraction mapping principle, this fixed point problem has a unique solution. This yields a solution of \eqref{eq linear ends2}. By scaling we obtain therefore a solution of \eqref{linear ends} in any of the unbounded ends.

The proof for  the bounded component $\mathcal E_b $ of  $\mathcal E = \mathcal M \setminus \{ s \leq \frac{\delta_s}{3\ve} \}$ is similar, if one uses Lemma~\ref{solvability bounded component}, and the fact that the boundary condition is  taken equal to 0.
\end{proof}

\section{Proof of Theorem~\ref{main thm}}
\label{sec proof them}

To prove the theorem, it is sufficient to find a solution $\phi$ of \eqref{b1}, that is,
\begin{align}
\label{eq M1}
\Delta \phi + |A|^2 \phi + \ve \nabla \phi\cdot e_z + E + Q(x,\phi,\nabla \phi,D^2\phi,x)=0 \quad\text{in } \mathcal M ,
\end{align}
where $\mathcal M$ is the surface constructed in Section~\ref{sec:construction of M}, which depends on $\beta_1,\beta_2,\tau_1,\tau_4 \in [-\delta_p,\delta_p]$, and $E = H - \ve \nu\cdot e_z$. Later, we will verify that  $\{ x+ \phi(x) \nu(x): x \in \mathcal M \}$ is an embedded complete surface.

Thanks to Proposition~\ref{prop error},  $ E = E_0 + E_d
$ with $
\|E_0\|_{**} \leq C \ve
$ and
$$
E_d = \tau_1 w_1 + \tau_4 w_2 + \beta_1 w_1' + \beta_4 w_2' + O\left(\sum \beta_i^2 + \tau_i^2\right),
$$
where $O\left(\sum \beta_i^2 + \tau_i^2\right)$ are smooth functions with compact support, with $\| \ \|_{**}$ bounded by $\sum_{i=1,4} \beta_i^2 + \tau_i^2$.
Thus \eqref{eq M1} takes the form
\begin{align}
\label{eqM2}
& \Delta \phi + |A|^2 \phi + \ve \nabla \phi\cdot e_z + \tilde E + Q(\phi,\nabla \phi,D^2\phi)+ \tau_1 w_1
\\
\nonumber
& \qquad + \tau_4 w_2 + \beta_1 w_1' + \beta_4 w_2' =0 ,
\quad \text{ in }\mathcal M ,
\end{align}
where $\tilde E = E_0 +  O\left(\sum_{i=1,4} \beta_i^2 + \tau_i^2\right)$. Hence,
$$
\|\tilde E\|_{**} \leq C \left( \ve + \sum_{i=1,4}\left(\beta_i^2+\tau_i^2\right)\right) .
$$
Note that $\mathcal M = \mathcal M[\beta_1,\beta_2,\tau_1,\tau_2]$ and the unkonwns are $\phi$ and $\beta_1,\beta_2,\tau_1,\tau_2$.

We look for a solution $\phi$ of \eqref{eqM2} of the form
$$
\phi = \eta_1 \phi_1 + \eta_2 \phi_2,
$$
where $\phi_1,\phi_2$ are new unknown functions, which solve an appropriate system, and $\eta_1$, $\eta_2$ are smooth cut-off functions such that
\begin{align*}
\eta_1(s) &=
\begin{cases}
1 & \text{if } s \leq \frac{2\delta_s}{\ve}-1
\\
0 & \text{if } s \geq \frac{2\delta_s}{\ve}
\end{cases}
\\
\eta_2(s) &=
\begin{cases}
0 & \text{if } s \leq \frac{\delta_s}{3\ve}
\\
1 & \text{if } s \geq \frac{\delta_s}{2\ve}
\end{cases} ,
\end{align*}
where $s=s(x)$ measures geodesic distance from the core of $\mathcal M$.

We introduce next the following system for $\phi_1$, $\phi_2$:
%\begin{align*}
%\label{sys-eq1}
%& \Delta \phi_1 + |A|^2 \phi_1
%= - \ve \partial_z \phi_1 - \ve\phi_1 \partial_z \eta_1  - \ve \phi_2 \partial_z \eta_2
% - 2 \nabla \phi_2 \nabla \eta_2 -  \phi_2 \Delta \eta_2\\
%\nonumber
%&\qquad
%- \tilde \eta_1 (\tilde E+Q(x,\phi,\nabla \phi,D^2\phi))
%- \tau_1 w_1 + \tau_4 w_2 + \beta_1 w_1' + \beta_4 w_2'
%\quad \text{in }\mathcal C
%\\
%\label{sys-eq2}
%&
%\Delta\phi_2 + |A|^2 \phi_2 + \ve \partial_z \phi_2 =
%-2 \nabla \phi_1 \nabla \eta_1 - \phi_1 \Delta \eta_1
%- \tilde \eta_2 ((\tilde E+Q(x,\phi,\nabla \phi,D^2\phi))) \quad
%\text{in }\mathcal E,
%\end{align}
%
\begin{multline}
\label{sys-eq1}
 \Delta \phi_1 + |A|^2 \phi_1
+ \ve \partial_z \phi_1 =  - \ve \phi_2 \partial_z \eta_2
 - 2 \nabla \phi_2 \nabla \eta_2 -  \phi_2 \Delta \eta_2\\
- \tilde \eta_1 (\tilde E+Q(x,\phi,\nabla \phi,D^2\phi))
- \tau_1 w_1 + \tau_4 w_2 + \beta_1 w_1' + \beta_4 w_2'
\quad \text{in }\mathcal C
\end{multline}
\begin{multline}
\label{sys-eq2}
\Delta\phi_2 + |A|^2 \phi_2 + \ve \partial_z \phi_2 =
- \ve\phi_1 \partial_z \eta_1
-2 \nabla \phi_1 \nabla \eta_1 - \phi_1 \Delta \eta_1  \\
- \tilde \eta_2 ((\tilde E+Q(x,\phi,\nabla \phi,D^2\phi))) \quad
\text{in }\mathcal E,
\end{multline}
%%%%%%%%%%%%
%\textcolor{blue}{Do you want
%\begin{multline*}
% \Delta \phi_1 + |A|^2 \phi_1
%+ \ve \partial_z \phi_1 = - \ve \phi_2 \partial_z \eta_2
% - 2 \nabla \phi_2 \nabla \eta_2 -  \phi_2 \Delta \eta_2\\
%- \tilde \eta_1 (\tilde E+Q(x,\phi,\nabla \phi,D^2\phi))
%- \tau_1 w_1 + \tau_4 w_2 + \beta_1 w_1' + \beta_4 w_2'
%\quad \text{in }\mathcal C
%\end{multline*}
%\begin{multline*}
%\Delta\phi_2 + |A|^2 \phi_2 + \ve \partial_z \phi_2 = \textcolor{red}{ - \ve\phi_1 \partial_z \eta_1}
%-2 \nabla \phi_1 \nabla \eta_1 - \phi_1 \Delta \eta_1  \\
%- \tilde \eta_2 ((\tilde E+Q(x,\phi,\nabla \phi,D^2\phi))) \quad
%\text{in }\mathcal E,
%\end{multline*}
%I don't see how $\phi = \eta_1 \phi_1 + \eta_2\phi_2$ satisfies \eqref{eqM2} otherwise. If we have this system, does it change \eqref{sys-eq1-b}? }
%%%%%%%%%%
where  $\mathcal E = \mathcal M \cap \{ s \geq \frac{\delta_s}{3\ve} \}$
is the union of the ends, $\mathcal C = \mathcal M \cap \{ s \leq \frac{2\delta_s}{\ve} \}$ is close to a Scherk surface, and $\tilde \eta_1$, $\tilde \eta_2$ are smooth cut-off functions such that: %\comment{should $\mathcal C = \mathcal M \cap \{ s \leq \frac{2 \delta_s}{\ve} \}$}
\begin{align*}
\tilde\eta_1(s) &=
\begin{cases}
1 & \text{if } s \leq \frac{\delta_s}{\ve}-1
\\
0 & \text{if } s \geq \frac{\delta_s}{\ve}
\end{cases}
\\
\tilde\eta_2(s) &= 1-\tilde\eta_1(s) .
\end{align*}
In the term $Q(\phi,\nabla \phi,D^2\phi)$ of \eqref{sys-eq1}, \eqref{sys-eq2}, $\phi$  means $\phi= \phi_1\eta_1 + \phi_2\eta_2$.
If $\phi_1,\phi_2$ solve  \eqref{sys-eq1}, \eqref{sys-eq2}, then multiplying \eqref{sys-eq1} by $\eta_1$ and \eqref{sys-eq2} by $\eta_2$ we see that $\phi= \phi_1\eta_1 + \phi_2\eta_2$ is a solution of \eqref{eqM2}.

%We will need an extra smooth cut-off function $\hat \eta_1$ such that
%\begin{align*}
%\hat\eta_1(s) &=
%\begin{cases}
%1 & \text{if } s \leq \frac{2\delta_s}{\ve}
%\\
%0 & \text{if } s \geq \frac{2\delta_s}{\ve}+1
%\end{cases}.
%\end{align*}

Using the change of variables introduced in the construction of $\mathcal M$ in Section~\ref{sec:construction of M}, we see that solving \eqref{sys-eq1} is equivalent to finding a solution to
\begin{multline}
\label{sys-eq1-b}
 \Delta_\Sigma \phi_1 + |A_\Sigma|^2 \phi_1
= L'\phi_1
 - \ve \partial_z \phi_1 - \ve\phi_1 \partial_z \eta_1  - \ve \phi_2 \partial_z \eta_2
- 2 \nabla \phi_2 \nabla \eta_2 -  \phi_2 \Delta \eta_2
\\
%\nonumber
%&\qquad
- \tilde \eta_1 (\tilde E+Q(x,\phi,\nabla \phi,D^2\phi))
- \tau_1 w_1 + \tau_4 w_2 + \beta_1 w_1' + \beta_4 w_2'
\quad \text{in } \Sigma,
\end{multline}
where now all the functions are considered on the Scherk surface $\Sigma$; $\Delta_\Sigma$ and $A_\Sigma$ refer to the Laplace operator and second fundamental form of $\Sigma$. By Proposition~\ref{prop:linearopclose}, $L'$ is a second order operator with coefficients supported on $s\leq 2\delta_s/\ve$ and whose $C^1$ norms on the region $s\leq 5 \delta_s/\ve$ are bounded by $\delta_s+\delta_p+\ve$. In principle, by the change of variables, we need to solve \eqref{sys-eq1-b} on a subset of $\Sigma$, but finding a solution in all $\Sigma$ is sufficient. This solution is multiplied later by the cut-off $\eta_1$.

Similarly, we consider $\mathcal E_0 = \mathcal M_0 \cap \{s \geq \frac{\delta_s}{3\ve}\}$, where $\mathcal M_0$ is the initial approximation corresponding to $\beta_1=\beta_2=\tau_1=\tau_2=0$. For $|\beta_i|+|\tau_i|\leq \delta_p$ and $\delta_p>0$ fixed small, $\mathcal E$ is mapped onto $\mathcal E_0$ and this mapping allows us to write \eqref{sys-eq2} as
\begin{multline}
\label{sys-eq2-b}
%&
\Delta_{\mathcal E_0}\phi_2 + |A_{\mathcal E_0}|^2 \phi_2 + \ve \partial_z \phi_2 = L''\phi_2 -2 \nabla \phi_1 \nabla \eta_1 - \phi_1 \Delta \eta_1
\\
%\nonumber
%& \qquad
- \tilde \eta_2 ((\tilde E+Q(x,\phi,\nabla \phi,D^2\phi))) \quad
\text{in }\mathcal E_0,
\end{multline}
where now all functions are considered on the Scherk surface $\mathcal E_0$; $\Delta_{\mathcal E_0}$ and $A_{\mathcal E_0}$ refer to the Laplace operator and second fundamental form of $\mathcal E_0$.
The operator $L''$ has coefficients whose $C^1$ norms are $o(\delta_p)$ as $\delta_p\to 0$. Indeed, one can see this using the form of the operator on the ends in \eqref{Jacobi ends} and the continuous dependence of ODE on initial conditions, because the parameters $\beta_i$, $\tau_i$ determine the initial condition for the differential equation \eqref{eq:self-trans-ode}.

We solve \eqref{sys-eq1-b} on the Scherk surface using  Proposition~\ref{prop linear scherk 4} with norms
\begin{align}
\label{norm sol Sigma}
\|\phi_1\|_{*,\Sigma}
&= \sup_{x\in\Sigma} e^{\gamma s(x)} \|\phi_1\|_{C^{2,\alpha}(\overline B_1(x))},
\\
\nonumber
\|h_1 \|_{**,\Sigma}
&= \sup_{x\in\Sigma} e^{\gamma s(x)} \|h_1\|_{C^{\alpha}(\overline B_1(x))} .
\end{align}
%We denote also
%\begin{align*}
%\|\phi_1\|_{*,\mathcal C}
%&= \sup_{x\in\mathcal C} e^{\gamma s(x)} \|\phi_1\|_{C^{2,\alpha}(\overline B_1(x))}
%\\
%\|h_1 \|_{**,\mathcal C}
%&= \sup_{x\in\mathcal C} e^{\gamma s(x)} \|h_1\|_{C^{\alpha}(\overline B_1(x))}
%\end{align*}
%for functions defined in $\mathcal C$.
%We recall that $\mathcal C$ is mapped onto a portion of  $\Sigma$, and observe that these norms are equivalent when restricted to this part.

We consider the system \eqref{sys-eq2}, \eqref{sys-eq1-b} as a fixed point problem for  $(\phi_1,\phi_2,\beta_1,\beta_2,\tau_1,\tau_2)$ belonging to the subset $\mathcal B$ of $C^{2,\alpha}(\Sigma) \times C^{2,\alpha}(\mathcal E_0) \times \R^4$ defined by
	\[
	\mathcal B = \{(\phi_1,\phi_2,\beta_1,\beta_2,\tau_1,\tau_2) \mid
 \max_{i=1,2} (\|\phi_i\|_{*,\Sigma}, |\beta_i|, |\tau_i|) \leq M \ve \},
	\]
%	\[
%	\mathcal B = \{(\phi_1,\phi_2,\beta_1,\beta_2,\tau_1,\tau_2) \mid
%\|\phi_1\|_{*,\Sigma} \leq M \ve, \
%\|\phi_2\|_{*,\mathcal E} \leq M \ve, \
%|\beta_i|\leq M \ve, \
%|\tau_i|\leq M \ve, \ i=1,2
%\}
%	\]
where $M >0 $ is a constant to be chosen later and the norms are defined in  \eqref{norm end sol} and \eqref{norm sol Sigma}.

Consider $(\phi_1,\phi_2,\beta_1,\beta_2,\tau_1,\tau_2)\in \mathcal B$.
We define $F(\phi_1,\phi_2,\beta_1,\beta_2,\tau_1,\tau_2)$ as follows.
Using Proposition~\ref{prop linear scherk 4}, we let $\bar \phi_1$, $\bar \beta_1$, $\bar \beta_2$, $\bar \tau_1$, $\bar \tau_2$ be the solution of
\begin{multline}
\label{sys-eq1-b2}
%&
 \Delta_\Sigma \bar \phi_1 + |A_\Sigma|^2 \bar\phi_1
= L'\phi_1
- \ve \partial_z \phi_1 - \ve\phi_1 \partial_z \eta_1  - \ve \phi_2 \partial_z \eta_2
- 2 \nabla \phi_2 \nabla \eta_2 -  \phi_2 \Delta \eta_2
\\
%\nonumber
%&\qquad
- \tilde \eta_1 (\tilde E+Q(x,\phi,\nabla \phi,D^2\phi))
- \bar \tau_1 w_1 + \bar \tau_4 w_2 + \bar \beta_1 w_1' + \bar \beta_4 w_2'
\quad \text{in } \Sigma .
\end{multline}
In the right-hand side of this equation, all terms are well defined in spite of the fact that $\phi_1$, $\phi_2$ (after changing variables) are defined only on some subsets, because of the cut-off functions. As we will verify later, the right-hand side has finite $\|\ \|_{**,\Sigma}$ norm, thus Proposition~\ref{prop linear scherk 4}  applies.
Next, using Proposition~\ref{prop linear ends}, we find a solution of
\begin{multline}
\label{sys-eq2-b2}
%&
\Delta_{\mathcal E_0}\bar\phi_2 + |A_{\mathcal E_0}|^2 \bar\phi_2 + \ve \partial_z \phi_2 = L''\phi_2 -2 \nabla \phi_1 \nabla \eta_1 - \phi_1 \Delta \eta_1
\\
%\nonumber
%& \qquad
- \tilde \eta_2 ((\tilde E+Q(x,\phi,\nabla \phi,D^2\phi))) \quad
\text{in }\mathcal E_0,
\end{multline}
where the right-hand side will be proven to have finite $\| \ \|_{**,\mathcal E_0}$ norm.
We define
$$
(\bar\phi_1,\bar\phi_2,\bar\beta_1,\bar\beta_2,\bar\tau_1,\bar\tau_2)= F (\phi_1,\phi_2,\beta_1,\beta_2,\tau_1,\tau_2)
.
$$

%Let us verify that $F$ is a contraction on $\mathcal B$.

Let us verify that $F$ maps $\mathcal B$ into $\mathcal B$ and is a contraction. Consider $(\phi_1,\phi_2,\beta_1,\beta_2,\tau_1,\tau_2) \in \mathcal B$ and let $(\bar\phi_1,\bar\phi_2,\bar\beta_1,\bar\beta_2,\bar\tau_1,\bar\tau_2)= F (\phi_1,\phi_2,\beta_1,\beta_2,\tau_1,\tau_2) $.
Using \eqref{est linear scherk2} and standard elliptic estimates, we deduce
\begin{multline}
%\nonumber
\|\bar\phi_1\|_{*,\Sigma} + |\bar\beta_i|+|\bar\tau_i|
%&
\\\leq C \| L'\phi_1
- \ve \partial_z \phi_1 - \ve\phi_1 \partial_z \eta_1  - \ve \phi_2 \partial_z \eta_2
- 2 \nabla \phi_2 \nabla \eta_2 -  \phi_2 \Delta \eta_2 \|_{**,\Sigma}
\label{est bar phi1}
%& \quad
\\+\| \tilde \eta_1 (\tilde E+Q(x,\phi,\nabla \phi,D^2\phi))\|_{**,\Sigma} .
\end{multline}
%and let us estimate
%$(\psi_1,\psi_2,\mu_1,\mu_2,\theta_1,\theta_2)$
We first remark that by, Proposition \ref{prop error}, we have
$$
\|\tilde \eta_1 \tilde E\|_{**,\Sigma} \leq C_E \ve .
$$
If $x$ lies in the support of $\eta_2$, that is, $\frac{\delta_s}{3\ve}\leq s(x)\leq \frac{\delta_s}{2\ve}$, then
$$
\|\Delta \eta_2 \|_{C^{\alpha}(\overline{B}_1(x))}\leq C \ve^2 , \quad
\|\phi_2\|_{C^{\alpha}(\overline{B}_1(x))} \leq \ve^{-2} e^{-\gamma\delta_s/\ve - \ve \gamma s(x)}\|\phi_2\|_{*,\mathcal E_0}.
$$
Then
$$
\| \phi_2 \Delta\eta_2 \|_{**,\Sigma} \leq C e^{-\frac{\delta_s}{2\ve}} \|\phi_2\|_{*,\mathcal E_0}.
$$
Similarly,
$$
\|\nabla \phi_2 \nabla \eta_2\|_{**,\Sigma} \leq C \ve^{-1} e^{-\frac{\delta_s}{2\ve}} \|\phi_2\|_{*,\mathcal E_0},
\quad
\text{and}
\quad
\|\phi_2 \partial_z \eta_2\|_{**,\Sigma} \leq C \ve^{-1} e^{-\frac{\delta_s}{2\ve}} \|\phi_2\|_{*,\mathcal E_0}.
$$
%It follows that
%$$
%\| - \phi_2 \Delta \eta_2 - 2\nabla \phi_2 \nabla \eta_2 + \ve \phi_2 \partial_z \eta_2 \|_{**,\Sigma} \leq C \ve^{-1} e^{-\frac{\delta_s}{2\ve}} \|\phi_2\|_{*,\mathcal E_0}.
%$$
Because the $C^1$ norm of the coefficients of $L'$ in $s\leq 5\delta_s/\ve$ are bounded by $C(\delta_p+\delta_s+\ve)$, we have
$$
\|L'\phi_1\|_{**,\Sigma}\leq C(\delta_p+\delta_s+\ve)\|\phi_1\|_{*,\Sigma}.
$$
Also,
$$
\|\ve \partial_z \phi_1\|_{**,\Sigma}\leq C \ve \|\phi_1\|_{*,\Sigma}.
$$
For the $Q$ term, analogously to Proposition~\ref{prop quadratic}, we have
$$
\|\tilde \eta_1 Q(x,\phi,\nabla \phi,D^2\phi) \|_{**,\Sigma} \leq C \|\phi_1\|_{*,\Sigma}^2 + C \ve^{-4} e^{-\frac{\gamma\delta_s}{\ve}} \|\phi_2\|_{*,\mathcal E_0}^2.
$$
Therefore, using \eqref{est bar phi1} and the previous inequalities, we obtain
%\begin{align}
%\nonumber
%\|\bar\phi_1\|_{*,\Sigma}
%& \leq
%C C_E\ve + C \ve^{-1} e^{-\frac{\delta_s}{2\ve}} \|\phi_2\|_{*,\mathcal E_0} + C ( \delta_p +\delta_s +\ve) \|\phi_1\|_{*,\Sigma}
%\\
%\nonumber
%& \qquad
% +C \|\phi_1\|_{*,\Sigma}^2 + C \ve^{-4} e^{-\frac{\gamma\delta_s}{\ve}} \|\phi_2\|_{*,\mathcal E_0}^2
%\\
%\label{B to B 1}
%&\leq C C_E\ve  +  C \ve^{-1} e^{-\frac{\delta_s}{2\ve}} M\ve  + C ( \delta_p +\delta_s +\ve) M \ve
% +C M^2 \ve^2 + C \ve^{-4} e^{-\frac{\gamma\delta_s}{\ve}} M^2\ve.\end{align}
%
\begin{multline*}
\nonumber
\|\bar\phi_1\|_{*,\Sigma}
 \leq
C C_E\ve + C \ve^{-1} e^{-\frac{\delta_s}{2\ve}} \|\phi_2\|_{*,\mathcal E_0} + C ( \delta_p +\delta_s +\ve) \|\phi_1\|_{*,\Sigma}
\\
 +C \|\phi_1\|_{*,\Sigma}^2 + C \ve^{-4} e^{-\frac{\gamma\delta_s}{\ve}} \|\phi_2\|_{*,\mathcal E_0}^2,
\end{multline*}
which gives
\begin{multline}
\|\bar\phi_1\|_{*,\Sigma}
\label{B to B 1}
\\
\leq C C_E\ve  +  C \ve^{-1} e^{-\frac{\delta_s}{2\ve}} M\ve  + C ( \delta_p +\delta_s +\ve) M \ve
 +C M^2 \ve^2 + C \ve^{-4} e^{-\frac{\gamma\delta_s}{\ve}} M^2\ve.
 \end{multline}

To estimate $\bar\phi_2$, we use Proposition~\ref{prop linear ends} to obtain
\begin{align}
\label{est phi2}
\|\bar\phi_2\|_{*,\mathcal E_0} \leq C \|L''\phi_2 -2 \nabla \phi_1 \nabla \eta_1 - \phi_1 \Delta \eta_1 - \tilde \eta_2 ((\tilde E+Q(\phi,\nabla \phi,D^2\phi))) \|_{**,\mathcal E_0}.
\end{align}
We first remark that, by Proposition \ref{prop error}, we have
$$
\|\tilde \eta_2 \tilde E\|_{**,\mathcal E_0} \leq C_E \ve .
$$
We note that $\Delta \eta_1$ is supported on $\frac{2\delta_s}\ve-1\leq s \leq \frac{2\delta_s}\ve$. Using the smoothness of $\eta_1$, we get
$$
\| \phi_1 \Delta\eta_1\|_{**,\mathcal E_0}+\| \nabla \phi_1 \nabla \eta_1\|_{**,\mathcal E_0}\leq C e^{ -\frac{\gamma\delta_s}{\ve}}\|\phi_1\|_{*,\Sigma}.
$$
Again, as in Proposition~\ref{prop quadratic}, we have
$$
\|\tilde \eta_2 Q(x,\phi,\nabla \phi,D^2\phi) \|_{**,\mathcal E_0} \leq C \|\phi_1\|_{*,\Sigma}^2 + C \ve^{-4} e^{-\frac{\gamma\delta_s}{\ve}} \|\phi_2\|_{*,\mathcal E_0}^2.
$$
Combining \eqref{est phi2} with the previous inequalities, we arrive at
\begin{align}
\label{B to B 2}
\|\bar\phi_2\|_{*,\Sigma}
&\leq C C_E\ve  + o(\delta_p) M \ve +  C \ e^{-\frac{\delta_s}{2\ve}} M\ve
 +C M^2 \ve^2 + C \ve^{-4} e^{-\frac{\gamma\delta_s}{\ve}} M^2\ve.\end{align}

The right-hand sides of \eqref{B to B 1} and \eqref{B to B 2} are less that $M\ve$ provided $M$ is fixed large (for example $2 C C_E)$, then we fix $\delta_p$, $\delta_s$ small and  work with small  $\ve>0$. A similar estimate holds for $\bar\beta_i$, $\bar\tau_i$ and this shows that $F$ maps $\mathcal B$ to itself.

Let us verify that $F$ is a contraction in $\mathcal B$. For this, we first
claim that
$$
|F(\phi_1,\phi_2,\beta_1,\beta_2,\tau_1,\tau_2) - F(\psi_1,\psi_2,\beta_1,\beta_2,\tau_1,\tau_2) |\leq o(1) (
\|\phi_1-\psi_1\|_{*, \Sigma} +
\|\phi_2-\psi_2\|_{*,\mathcal E_0} )
$$
for $(\phi_1,\phi_2,\beta_1,\beta_2,\tau_1,\tau_2) , (\psi_1,\psi_2,\beta_1,\beta_2,\tau_1,\tau_2)  \in \mathcal B$, where $o(1)$ is small if we choose the parameters $\delta_p,\delta_s>0$ small and then let $\ve$ be small. The estimate relies on the same computations as before for the terms that are linear in $\phi_1$, $\phi_2$ in the right hand side of equations \eqref{sys-eq1-b2} and \eqref{sys-eq2-b2}. For the nonlinear terms, it is enough to have the following inequalities, whose proof is similar to Proposition~\ref{prop quadratic}: For $\phi = \eta_1\phi_1 + \eta_2\phi_2$, $\psi = \eta_1\psi_1 + \eta_2\psi_2$,
\begin{align*}
& \|\tilde \eta_1 Q(x,\phi,\nabla \phi,D^2\phi) -\tilde \eta_1 Q(x,\psi,\nabla \psi,D^2\psi) \|_{**,\Sigma}
\\
&  \qquad \leq C ( \|\phi_1\|_{*,\Sigma} + \|\phi_2\|_{*,\mathcal E_0} +  \|\psi_1\|_{*,\Sigma} + \|\psi_2\|_{*,\mathcal E_0}) (  \|\phi_1-\psi_1\|_{*,\Sigma} + \|\phi_2 - \psi_2\|_{*,\mathcal E_0}  )
\end{align*}
and there is a similar estimate for $\tilde \eta_2 Q$.
The Lipschitz dependence with small constant of $F$ on $\beta_i$, $\tau_i$ is proved using that, in each term in the right-hand side of \eqref{sys-eq1-b2} and \eqref{sys-eq2-b2}, either the dependence  on the parameters is Lipschitz with small constant or is quadratic (this is the case of  $\tilde E$).

By the contraction mapping principle, for $\ve>0$ small, $F$ has a unique fixed point in $\mathcal B$. This gives the desired solution.

\bigskip
\noindent
{\bf Acknowledgements.} { We would like to thank Sigurd Angenent, Nikolaos Kapouleas, Frank Pacard and Juncheng Wei   for useful conversations.  The first and second authors have been supported by grants Fondecyt 1130360 and 1150066, Fondo Basal CMM and by N\'ucleo Mienio CAPDE.  }

\end{document}